\numberwithin{equation}{section}
\numberwithin{equation}{section}
\newtheorem{theorem}{Theorem}[section]
\newtheorem{corollary}[theorem]{Corollary}
\newtheorem{lemma}[theorem]{Lemma}
\newtheorem{fact}{Fact}[section]
\newtheorem{proposition}[theorem]{Proposition}
\newtheorem{definition}{Definition}[section]
\newtheorem{remark}[theorem]{Remark}
\newtheorem{mainresult}{Main Result} 
\newtheorem{property}{Property} 
\newtheorem{assumption}{Assumption}
\patchcmd{\thebibliography}{\settowidth}
  {\setlength{\itemsep}{0pt plus 0.5ex} \setlength{\parskip}{0pt} \settowidth}
  {}{}
\newcommand{\linV}{\vec{V}}
\newcommand{\linVp}{\vec{V}_p}
\newcommand{\linVd}{\vec{V}_d}
\newcommand{\soln}{\operatorname{SOLN}}
\newcommand{\opt}{\operatorname{OPT}}
\newcommand{\dist}{\operatorname{Dist}}
\newcommand{\gap}{\operatorname{Gap}}
\newcommand{\thetau}{\theta^\star}
\newcommand{\thetax}{\theta_p^\star}
\newcommand{\thetas}{\theta_d^\star}
\newcommand{\limitingeratio}{limiting error ratio}
\newcommand{\eratio}{error ratio}
\newcommand{\distinfeas}{\mathrm{DistInfeas}}
\newcommand{\LPsharp}{LP sharpness}
\newcommand{\er}{ER}
\newcommand{\limitinger}{LimitingER}
\newcommand{\ignore}[1]{}
\newcommand{\condN}{\mathcal{N}}
\newcommand{\eps}{\varepsilon}
\newcommand{\calD}{{\cal D}}
\newcommand{\calE}{{\cal E}}
\newcommand{\calF}{{\cal F}}
\newcommand{\calH}{{\cal H}}
\newcommand{\calI}{{\cal I}}
\newcommand{\calM}{{\cal M}}
\newcommand{\calN}{{\cal N}}
\newcommand{\calR}{{\cal R}}
\newcommand{\calS}{{\cal S}}
\newcommand{\calU}{{\cal U}}
\newcommand{\calV}{{\cal V}}
\newcommand{\calX}{{\cal X}}
\newcommand{\calY}{{\cal Y}}
\newcommand{\calZ}{{\cal Z}}
\newcommand{\e}{{\boldsymbol e}}
\newcommand{\f}{{\boldsymbol f}}
\title{Computational Guarantees for Restarted PDHG for LP based on ``Limiting Error Ratios'' and LP Sharpness}
\author{Zikai Xiong\thanks{MIT Operations Research Center, 77 Massachusetts Avenue, Cambridge, MA 02139, USA. 
\href{mailto:zikai@mit.edu}{zikai@mit.edu}.  Research supported by AFOSR Grant No. FA9550-22-1-0356.
}  
\and Robert M. Freund\thanks{MIT Sloan School of Management, 77 Massachusetts Avenue, Cambridge, MA 02139, USA. 
\href{mailto:rfreund@mit.edu}{rfreund@mit.edu}. Research supported by AFOSR Grant No. FA9550-22-1-0356.}}
\date{\today}
\begin{document}

\maketitle

\begin{abstract}
In recent years, there has been growing interest in solving linear optimization problems -- or more simply ``LP'' -- using first-order methods in order to avoid the costly matrix factorizations of traditional methods for huge-scale LP instances. The restarted primal-dual hybrid gradient method (PDHG) -- together with some heuristic techniques -- has emerged as a  powerful tool for solving huge-scale LPs. However, the theoretical understanding of the restarted PDHG and the validation of various heuristic implementation techniques are still very limited. Existing complexity analyses have relied on the Hoffman constant of the LP KKT system, which is known to be overly conservative, difficult to compute (and hence difficult to empirically validate), and fails to offer insight into instance-specific characteristics of the LP problems. These limitations have limited the capability to discern which characteristics of LP instances lead to easy versus difficult LP instances from the perspective of computation. With the goal of overcoming these limitations, in this paper we introduce and develop two purely geometry-based condition measures for LP instances: ``\limitingeratio''~and \LPsharp. We provide new computational guarantees for the restarted PDHG based on these two condition measures. For \limitingeratio, we provide a computable upper bound and show its relationship with the data instance's proximity to infeasibility under perturbation. For \LPsharp, we prove its equivalence to the stability of the LP optimal solution set under perturbation of the objective function. We validate our computational guarantees in terms of these condition measures via specially constructed instances. Conversely, our computational guarantees validate the practical efficacy of certain heuristic techniques (row preconditioners and step-size tuning) that improve computational performance in practice. Finally, we present computational experiments on LP relaxations from the MIPLIB dataset that demonstrate the promise of various implementation strategies.
\end{abstract}

\section{Introduction, Motivations, and Main Results}\label{sec:intro}

The focus of this paper is on solving huge-scale instances of linear optimization problems -- or more simply ``LP''.  LP problems abound across a wide variety of applications from manufacturing, transportation, service sciences, to computational science and engineering. Up until very recently, the most successful methods for solving LP problems have been simplex and pivoting methods~\cite{dantzig1963linear} and interior-point methods \cite{wright}; these methods have been extensively studied and implemented in state-of-the-art commercial solvers~\cite{gurobi}. In most cases, they are able to obtain a high-accuracy solution, but the success of these methods relies on repeatedly solving a linear system in each iteration. For LP instances of a huge scale, the matrix factorizations required for solving the linear systems can be prohibitively costly. Moreover, the matrix factorizations are often unable to exploit the natural sparsity of a given LP instance and can have prohibitively large memory requirements. In contrast, first-order methods (FOMs) -- and in particular the primal-dual hybrid gradient method (PDHG) \cite{chambolle2016ergodic} -- are emerging as an alternative for solving huge-scale LP problems because they do not require the repeated solution of linear equations nor do they impose large memory requirements, thus reducing per-iteration costs. The primary task within each iteration of a FOM for LP is the gradient computation, which typically only requires matrix-vector multiplications (and so can fully take advantage of the sparsity of the LP instance). Moreover, FOMs are more suitable for distributed and parallel computation, and can benefit from modern computational architectures that accelerate computation through distributed systems and graphics processing units (GPUs). And indeed this compatibility with modern hardware architectures underscores the growing importance of FOMs for solving larger-scale LP instances.

Perhaps the best-known implementation of an FOM for solving LP is the solver PDLP~\cite{applegate2021practical}, which is based on the primal-dual hybrid gradient method (PDHG)~\cite{chambolle2016ergodic} to solve the saddlepoint formulation of LP. In the experiments reported in~\cite{applegate2021practical}, PDLP was able to outperform the commercial solver Gurobi when the LP problem was large-scale. A recent GPU implementation of PDLP further outperforms traditional algorithms implemented in the state-of-art commercial solvers on more LP instances~\cite{lu2025cupdlp}. Furthermore, \cite{pdlpnews} presents a distributed version of PDLP that is used to solve practical LP problems with $92$ billion non-zeros in the constraint matrix -- which is far beyond the capability of any simplex or interior-point method. PDLP is based on PDHG~\cite{chambolle2016ergodic} -- often referred to as the Chambolle-Pock method.  PDHG is an operator-splitting method with alternating updates between the primal and dual variables. On top of running the base algorithm PDHG, schemes for restarting PDHG have also been proven in theory to help PDHG achieve faster linear convergence~\cite{applegate2023faster,lu2022infimal} and this theory has yielded impressive speed-ups in practice as well. And in addition to using restarts, PDLP also utilizes various heuristic techniques such as presolving, row preconditioning, and step-size tuning~\cite{applegate2021practical}.
 
We work with LP in the standard form:
\begin{equation}\label{pro: general primal LP}
	\min_{x\in\mathbb{R}^n}  \ c^\top x \quad	\text{s.t.}  \ Ax = b, \  x \ge 0 \ ,
\end{equation} 
where the constraint matrix $A \in \mathbb{R}^{m \times n}$, the right-hand side vector $b \in \mathbb{R}^{m}$, and the objective vector $c \in \mathbb{R}^n$, whose standard dual problem is:
\begin{equation}\label{pro: standard dual LP}
		\max_{y \in \mathbb{R}^m}  \ b^\top y  \quad	\text{s.t.}  \ A^\top y \le  c \ .
\end{equation}
The problem \eqref{pro: general primal LP} can also be expressed as the saddlepoint problem:
\begin{equation}\label{pro: saddle point LP}
	\min_{x \in \mathbb{R}^n_+ } \max_{y\in \mathbb{R}^m} L(x,y) : = c^\top x + b^\top y - x^\top A^\top y
\end{equation}
where $L(x,y)$ is the Lagrangian function and $y$ is the vector of multipliers on the equation system $Ax=b$, where exchanging the $\min$ and $\max$ operations in \eqref{pro: saddle point LP} leads to \eqref{pro: standard dual LP}. In this paper we let $\calX^\star$ and $\calY^\star$ denote the optimal solution sets of \eqref{pro: general primal LP} and \eqref{pro: standard dual LP}, let $\calZ^\star := \calX^\star \times \calY^\star$ denote the solution set of the saddlepoint problem \eqref{pro: saddle point LP}, and let $z:=(x,y)\in\mathbb{R}^{m+n}$ denote the combined primal/dual iterates.

The family of PDHG algorithms (using various step-sizes, and with/without overlaid restart schemes) is designed to directly tackle the saddlepoint problem \eqref{pro: saddle point LP} rather than the original problem \eqref{pro: general primal LP} or/and its dual \eqref{pro: standard dual LP}. One step of PDHG for \eqref{pro: saddle point LP} at the point $z=(x,y)$ is defined as follows:
\begin{equation}\label{eq: one PDHG}
	z^+  = \textsc{PDHGstep}(z) := \left\{
		\begin{array}{l}
			x^+ := P_{\mathbb{R}^n_+}\left(x-\tau\left(c-A^{\top} y\right)\right)   \\
			y^+:=y+\sigma\left(b-A\left(2 x^+ -x \right)\right)  \ , 
			\end{array}
	\right. 
\end{equation}
in which $\tau$ and $\sigma$ are the primal and dual step-sizes, respectively, and $P_{\mathbb{R}^n_+}$ is the projection operator onto the non-negative orthant $\mathbb{R}^n_+$ (which is the computationally trivial task of taking the nonnegative parts of the components).  
The vanilla PDHG tackles LP by generating iterates according to: $z^{k+1} \leftarrow \textsc{PDHGstep}(z^k)$ for $k=0,1,2,\ldots.$ PDHG with restarts, which is denoted by rPDHG, is a variant of PDHG that regularly restarts PDHG using the average of the previous $\ell$ iterates where $\ell$ is chosen according to some rule, see Section \ref{restarts} for a detailed description of rPDHG.

In this paper we seek to more deeply understand the performance of rPDHG applied to LP problems, both in theory and in practice, and to improve the theory where possible, as well as to explore the extent to which the theory is aligned with computational practice of rPDHG. The starting point of our work is the algorithm and analysis of rPDHG in the paper \cite{applegate2023faster}, which contains many new and important ideas both in terms of methods and analysis, and whose main results for rPDHG we now attempt to summarize in a brief and cogent manner. Along with other primal-dual methods,  \cite{applegate2023faster} analyzes rPDHG for solving \eqref{pro: saddle point LP}. Let the iterates of rPDHG be denoted by $z^{k}=(x^k,y^k)$. Taken together, Theorems 1 and 2 of \cite{applegate2023faster} state that rPDHG requires at most 
\begin{equation}\label{intro:alpha_complexity}
	O\left(\frac{\|A\|}{\alpha} \cdot \ln\left(\frac{\|A\|}{\alpha\eps}\right)\right) 
\end{equation} 
iterations in order to obtain an iterate $z^k$ for which $\dist(z^k,\calZ^\star)\le \eps$, where the notation $\dist(z,\calZ)$ denotes the Euclidean distance from a point $z$ to the set $\calZ$. Here the notation $O(\cdot)$ hides only absolute constants, $\|A\|$ is the spectral norm of $A$, and $\alpha$ is a positive scalar related to the sharpness \cite{polyak1979sharp, burke1993weak} of a particular functional called the ``normalized duality gap'' function, that we now describe. The normalized duality gap function is denoted $\rho(r;z)$ and is defined parametrically for a given positive ``radius'' $r$ as:
\begin{equation}\label{fearofheights_intro}
	\rho(r;z) := \left(\frac{1}{r}\right)\max_{  \hat{z} \in \widetilde{B}(r;z) }  \big[ L(x,\hat{y}) - L(\hat{x},y) \big] \ ,
\end{equation} 
where $z = (x,y)\in \mathbb{R}^n_+\times \mathbb{R}^m$, $\widetilde{B}(r;z) := \{\hat{z} := (\hat{x},\hat{y}):  \hat{x}\ge 0 \text{ and } \|\hat{z} -z\|_M \le r\}$, and the norm $\| \cdot \|_M$ is a carefully selected matrix norm constructed using $A$ and the step-size parameters of PDHG, where $M = \begin{pmatrix} I & -\eta A^\top \\ -\eta A & I \end{pmatrix}$ in the simple case when the primal and dual step-sizes of PDHG are identically equal to $\eta$ and  $\eta \le 1/\|A\|$. The scalar $\alpha$ in \eqref{intro:alpha_complexity} is related to the ``sharpness'' of $\rho(r;z)$, which we now describe as well. As developed in \cite{polyak1979sharp} and extended in \cite{burke1993weak}, the sharpness of a function $f$ essentially measures how fast $f$ grows away from its optimal solution set, and we say that $f$ is $\beta$-sharp if $f(v) - f^\star \ge \beta \cdot \dist(v, \calV^\star)$ for any $v$, where $\calV^\star$ is the set of minimizers of $f$, and $f^\star$ is the minimum value of $f$.  The quantity $\alpha$ in 
\eqref{intro:alpha_complexity} is related to these sharpness notions applied to the normalized duality gap function and is defined to be a constant that satisfies
\begin{equation}\label{intro:sharpness}
	 \rho(r^k;z^k) \ge \alpha\cdot \dist(z^k,\calZ^\star) 
\end{equation} 
for all iterates $z^k$ of the algorithm. (Note that this is a bit weaker than the actual sharpness of $\rho(r; \cdot)$ as it only needs to hold for the iterates $z^k$.) Here the radius parameter $r^k$ depends on the iteration $k$ and is dynamically and adaptively defined by the algorithm's iterates, see \cite{applegate2023faster} for the precise details. In summary, if there exists a positive value $\alpha$ for which \eqref{intro:sharpness} holds for all iterates $z^k$ of rPDHG, then the algorithm has an overall iteration complexity bound given by \eqref{intro:alpha_complexity}.   

% and \cite[Theorems 1 and 2]{applegate2023faster} prove that the restarted PDHG is linearly convergent with the overall iteration complexity bound \eqref{intro:alpha_complexity}. 

Furthermore, Property 3 and Lemma 5 of \cite{applegate2023faster} show that \eqref{intro:sharpness} always holds for 
\begin{equation}\label{medicareb} \alpha = \frac{1}{\calH(K)\sqrt{1+16\dist(0,\calZ^\star)^2}} \ , 
\end{equation}
where $\calH(K)$ is the Hoffman constant\footnote{The Hoffman constant of a matrix $W$ is a global error bound that bounds the distance of any point $v$ to a non-empty set of solutions of a system of linear inequalities $ Wv \le g$ in terms of the norm of the residual vector $\|[W v -g]^+ \|$, see \cite{hoffman1952approximate}.} \cite{hoffman1952approximate} of the matrix $K$ of the Karush-Kuhn-Tucker linear inequality system that defines the optimal solution set, namely 
$$
K:=   
\begin{pmatrix}
	I & -A^\top & A^\top & 0 & -c \\
	0 & 0 & 0 & -A & b  
	\end{pmatrix}^\top \ .
$$ Combining \eqref{intro:alpha_complexity} with \eqref{medicareb} one also obtains the following iteration bound for rPDHG for LP:
\begin{equation}\label{intro:complexity}
	O\left(\|A\|\cdot \calH(K) \cdot \max\{1,\dist(0,\calZ^\star)\}  \cdot \ln\left( \|A\|\cdot \calH(K)\cdot \max\{1,\dist(0,\calZ^\star)\}\cdot\frac{1}{\varepsilon}\right) \right)  \ .
\end{equation} 

\subsection{Motivating Issues}\label{subsec:motivating issues}

\noindent The rPDHG algorithm and iteration bounds \eqref{intro:alpha_complexity} and/or \eqref{intro:complexity} in \cite{applegate2023faster} are quite significant in at least several ways, including but not limited to the algorithm design (the restart scheme for rPDHG both theoretically and practically), the proof of linear convergence of rPDHG, the use of and the development of properties of the normalized duality gap function $\rho(r;z)$, and the use of the Hoffman constant $\calH(K)$ of the KKT system matrix $K$ to bound the constant $\alpha$ in \eqref{intro:sharpness}.

Nevertheless, there are certain issues with the bounds \eqref{intro:alpha_complexity} and/or \eqref{intro:complexity} that are not very satisfactory, and which we seek to overcome. One issue has to do with the reliance on the sharpness constant $\alpha$ of the normalized duality gap function $\rho(r;z)$ in \eqref{intro:alpha_complexity}. The normalized duality gap function $\rho(r;z)$ itself is not such a natural metric of LP behavior, as there is (at best) only a partial equivalence between $\rho(r;z)$ and more typical metrics and stopping criteria that are used in LP solvers such as primal and dual infeasibility and non-optimality measures. Also, the definition of $\rho(r;z)$ depends on the step-sizes of the algorithm through the matrix $M$. Therefore the sharpness of $\rho(r;z)$ and consequently the value of $\alpha$ depend -- at least partially -- on the magnitude and ratio of the primal and dual step-sizes of the algorithm and hence depend on more than just the intrinsic properties of the LP.  It is thus unclear from the iteration bound \eqref{intro:alpha_complexity} what natural/intrinsic properties of the LP problem itself (such as geometric properties, data-perturbation metrics, error bounds, etc.) contribute to the performance -- theoretically or practically -- of rPDHG.

Another issue concerns the iteration bound \eqref{intro:complexity}.  This bound replaces $\alpha$ by two other metrics, namely $\dist(0,\calZ^\star)$ and $\calH(K)$.  The reliance on $\dist(0,\calZ^\star)$, which measures the norms of primal and dual optimal solutions, seems both natural and appropriate, as in the very least $\dist(0,\calZ^\star)$ bounds changes in optimal solution values under perturbations of $b$ and $c$ and thus is tied to general notions of condition number theory for LP much more broadly, see \cite{renegar1994some}.  However, the reliance in \eqref{intro:complexity} on the Hoffman constant $\calH(K)$ of the KKT system matrix $K$ is not desirable for a number of reasons.  For one, the Hoffman constant of a matrix $W$ is typically extremely large by definition, as it accounts for the largest error bound of every linear inequality system $\calV_g = \{ v : Wv \le g\}$ over all possible right-hand side vectors $g$ for which $\calV_g \ne \emptyset$.  Furthermore, it is typically an extremely conservative measure since in academic applications one is typically only concerned with the error bound for a single given $g$.  And in the KKT system that single $g$ is given by $(0, -b, b, -c, 0)^\top$ and thus has its own special structure by itself and also is tied to the data $b$ and $c$ which appear in the matrix $K$ as well.  

A third issue also concerns the non-local nature of the error bounds embedded in the Hoffman constant.  If the primal or the dual feasible region has an extreme point whose active constraint system is badly ill-conditioned, then $\calH(K)$ will be very large, even if this extreme point is not related at all to the optimal solution in terms of active constraints and/or distance to the optima or objective function value.  Indeed, it would be better to have an iteration bound that does not depend so globally on properties of all points in the primal and the dual feasible sets.  

The fourth issue with the iteration bounds \eqref{intro:alpha_complexity} and/or \eqref{intro:complexity} has to do with computability in order to test whether or not the bounds align with computational practice.  From both the practical and theoretical perspectives, it is important to ask whether an iteration bound aligns with computational practice.  In other words, when applied to problems that arise in practice do problems with smaller sharpness constant values $\alpha$  require more iterations of rPDHG than problems with larger such values of $\alpha$?, and similarly for $\calH(K)$?  In order to study these questions one has to be able to actually compute (or approximately compute) $\alpha$ and/or $\calH(K)$.  However, it is not known (at least by us) how to compute $\alpha$ for LP problems in general.  And regarding the Hoffman constant, using the results in \cite{pena2021new} we know that $\calH(K)$ has the following characterization:
$$
\calH(K):=\max _{\substack{J \subseteq\{1, \ldots, 2 m+2 n+1\} \\ K_J \text { has full row rank }}} \frac{1}{\min _{v \in \mathbb{R}_{+}^J,\|v\|=1}\left\|K_J^{\top} v\right\|} \  ,
$$
where $K_J$ denotes the submatrix of $K$ formed by selecting the rows indexed by $J$. Even with this novel characterization, it is still a very difficult task to compute (or even just reliably estimate) $\calH(K)$ as it requires enumerating exponentially many submatrices \cite{pena2021new}. Thus neither of the bounds \eqref{intro:alpha_complexity} nor \eqref{intro:complexity} are amenable to testing the extent to which they might align with computational practice.

To illustrate the importance of alignment between theory and practice, consider the following extremely simple LP instance with $m=1$ and $n=2$, for which computing $\calH(K)$ is doable:
\begin{equation}\tag{$\mathrm{LP_{\gamma}}$}\label{pro sample 2dim LP}
	\begin{aligned}
			\underset{(x_1,x_2)\in\mathbb{R}^2_+}{\operatorname{min}}   \ \cos(\gamma) \cdot x_1 - \sin(\gamma) \cdot x_2   \quad
			 \operatorname{s.t.} \   \sin(\gamma) \cdot x_1 + \cos(\gamma) \cdot x_2 = 1 
	\end{aligned}
\end{equation} for $\gamma \in (0,\pi/2)$, which is illustrated in 
the left subfigure of Figure \ref{fig intro}. 
In the subfigure the feasible set is the blue line segment.  We note that for all $\gamma \in (0,\pi/2)$ the distance of the feasible set to $(0,0)$ is $1$. The optimal solution is the red point $(x_1^\star,x_2^\star) = (0, 1/\cos(\gamma))$, and the direction of the objective vector $c = [\cos(\gamma),-\sin(\gamma)]$ is denoted by the red dashed arrow.
The right subfigure shows the values of the Hoffman constant $\calH(K)$ of the KKT system, the iteration bound \eqref{intro:complexity} based on \cite{applegate2023faster}, and the actual iteration count of rPDHG for \eqref{pro sample 2dim LP} for $\gamma \in (0,\pi/2)$.  We see that as $\gamma \searrow 0$, \eqref{pro sample 2dim LP} becomes more ill-conditioned in terms of $\calH(K)$ and this is reflected in the iteration bound \eqref{intro:complexity}.  However, this family of LP instances is very easy for rPDHG to solve for arbitrarily small values of $\gamma$.  It would be better to have an iteration bound that is more aligned with actual computational practice.  (The blue line in the right subfigure is a spoiler: it shows the bound that we develop in Theorem \ref{thm overall complexity} of this paper, which for this family of problems is well-aligned with actual iteration counts.) Details of this experiment are presented in Section \ref{subsec:five simple experiments}.
\begin{figure}[htbp]
    \centering 
    % Subfigure 1
    \begin{subfigure}[b]{0.34\textwidth}
        \centering
        \includegraphics[width=\textwidth]{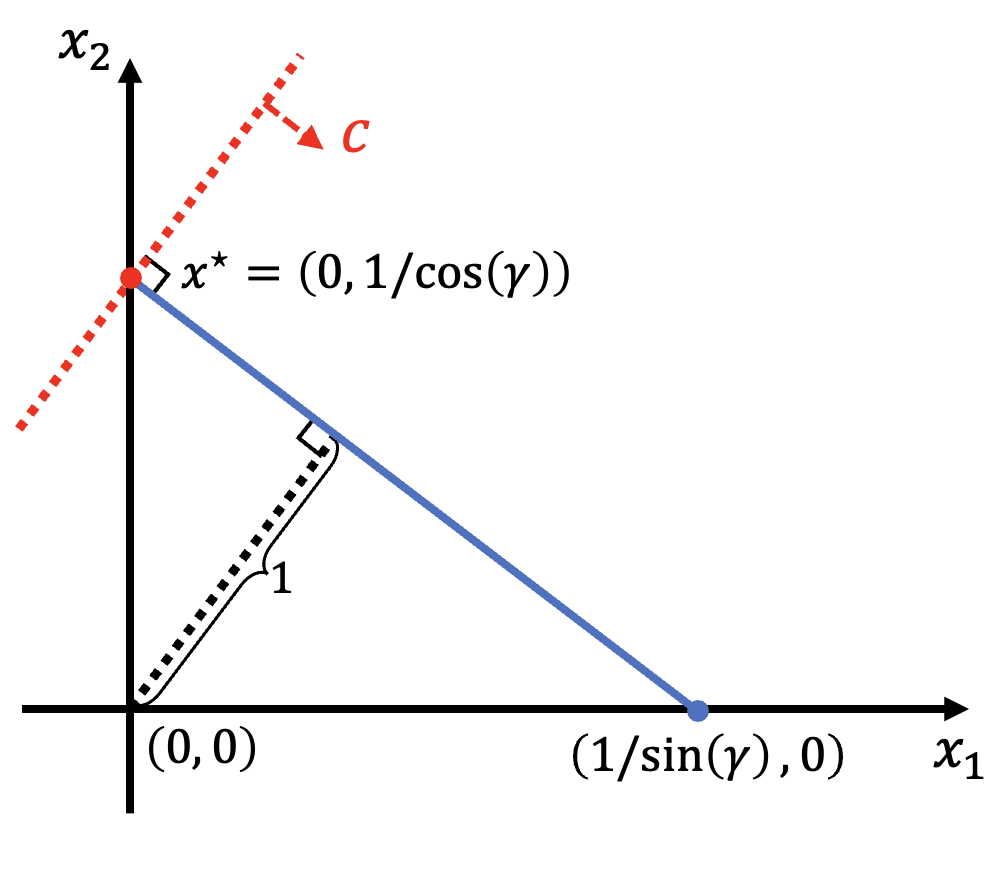} % Replace with your image file
        % \caption{Geometry of \eqref{pro sample 2dim LP}} 
        % \label{fig:sub1}
    \end{subfigure}
    % Subfigure 2
    \begin{subfigure}[b]{0.37\textwidth}
        \centering
        \includegraphics[width=\textwidth]{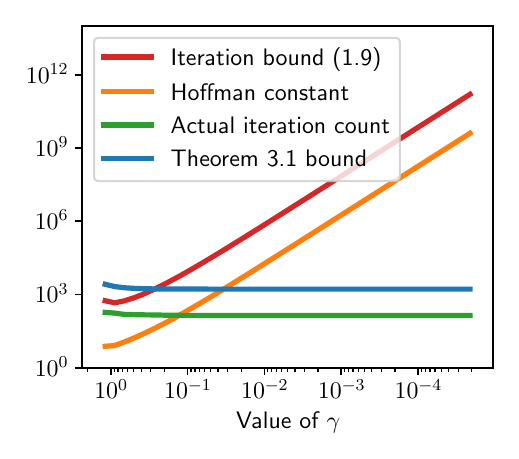} % Replace with your image file
        % \caption{Caption for Subfigure 2}
        % \label{fig:sub2}
    \end{subfigure}
    \caption{\footnotesize (left) The feasible set (blue line) and the optimal solution (red point) of \eqref{pro sample 2dim LP}. (right) The values of the theoretical iteration bound \eqref{intro:complexity} based on \cite{applegate2023faster}, the Hoffman constant $\calH(K)$ of the KKT system, and the actual iteration count of rPDHG to achieve $z=(x,y)$ for which $\dist(z,\calZ^\star)\le 10^{-10}$.  The blue line shows our new bound in Theorem \ref{thm overall complexity} of this paper.  } \label{fig intro}
\end{figure}

\subsection{New computational guarantees based on ``Limiting Error Ratios'' and LP Sharpness}\label{subsec: main results}

We now describe our new computational guarantees.  Consider the original LP primal problem \eqref{pro: general primal LP}, and let $\calF_p$ denote the feasible set, defined as the intersection of the nonnegative orthant $\mathbb{R}^n_+$ and the affine subspace $V_p := \{x \in \mathbb{R}^n : Ax = b\}$, namely $\calF_p = V_p \cap  \mathbb{R}^n_+$.  The primal optimal solution set is denoted by $\calX^\star$. Our new computational guarantees involve two types of condition measures.  The first condition measure is denoted by $\thetax$, and is called the ``\limitingeratio,'' or ``\limitinger'' for short, which we now define. 
\begin{definition}[Error ratio and limiting error ratio]\label{def:limitinger}
	For any $x\in V_p \setminus \calF_p$ (namely $x$ satisfies the linear equality constraints but lies outside the nonnegative orthant and is thus infeasible), the \textit{\eratio~(\er)} of $\calF_p$ at $x$ is defined as:
	\begin{equation}\label{overcast}
		\theta_p(x) : =\frac{\dist(x, \calF_p)}{\dist(x,\mathbb{R}^n_+)} 
			\ ,
	\end{equation}
	which is the ratio of the distance to the feasible set $\calF_p$ to the distance to the nonnegative orthant.
	Let $\theta_p(x) := 1$ for $x \in \calF_p$ for notational completeness.  The  \textit{\limitingeratio~(\limitinger)} is the quantity $\thetax$ defined as:
	\begin{equation}\label{kansas}
			\thetax : =   \lim_{\eps \to 0} \left( \sup_{x\in V_p, \hspace{.05cm}\dist(x, \calX^\star) \le \eps} \theta_p(x) \right)
		\ ,
	\end{equation}
	which is the supremum of $\theta_p(x)$ for all $x \in V_p$ approaching $\calX^\star$.
\end{definition}
\noindent

The measure $\thetax$ (and also $\theta_p(x)$) is similar to error bounds proposed in the literature such as those in \cite{pang1997error,lewis1998error}. We call $\theta_p(x)$ an \textit{\eratio} because it represents the ratio of two types of errors: the distance to the feasible region and the distance to the nonnegative orthant $\mathbb{R}^n_+$. For any $x \in V_p \setminus \calF_p$ it always holds that $\theta_p(x) \ge 1$ because the numerator in \eqref{overcast} is always at least as large as the denominator (since $\calF_p \subset \mathbb{R}^n_+$).
Indeed, many papers have studied different formulations of global upper bounds for the \eratio s of linear inequality systems from different perspectives, starting with the celebrated Hoffman bound \cite{hoffman1952approximate}, including \cite{mangasarian1981condition,bergthaller1992distance,luo1994perturbation,goffin1980relaxation} among many others, see \cite{pang1997error} for a comprehensive survey of relevant results. 
	Note that $\thetax$ is also bounded above by a Hoffman bound $\calH(\bar A)$ on the linear inequality system $Ax \le b, \ -Ax \le -b, \ -x \le 0$, where $\bar A :=[A^\top \ -A^\top \ -I]^\top$, because $\theta_p(x) \le \calH(\bar A)$ for all $x$.  However, unlike $\calH(\bar A)$, $\thetax$ only depends on local information about the feasible set $\calF_p$ near $\calX^\star$, and hence $\calH(\bar A)$ is likely to be an excessively conservative bound on $\thetax$, since (i) $\calH(\bar A)$ is a global bound whereas $\thetax$ is a local bound, and (ii) $x$ must satisfy $x \in V_p$ in \eqref{kansas} of Definition \ref{def:limitinger} and only allows zero error in the system $Ax=b$.

It should be noted that $\thetax$ is a \emph{local} quantity, as it is the supremum of $\theta_p(x)$ over infeasible points $x\in V_p\setminus\calF_p$ that approach $\calX^\star$. In contrast, the global supremum $\sup_{x\in V_p\setminus\calF_p}\theta_p(x)$ can be arbitrarily larger than $\thetax$.  To see this, consider again the family of LP instances  \eqref{pro sample 2dim LP} for $\gamma \in (0,\pi/2)$.  Here $\calF_p$ is the line segment with endpoints $x^\star=(0,\tfrac{1}{\cos\gamma})$ and $\bar x:=(\tfrac{1}{\sin\gamma},0)$. Now parameterize $V_p$ by
$x(t)=t\,x^\star+(1-t)\,\bar x=(\tfrac{1-t}{\sin\gamma},\tfrac{t}{\cos\gamma})$ for $t \in (-\infty, +\infty)$.
Then for $t>1$ we have $\dist(x(t),\mathbb{R}^2_+)= \frac{t-1}{\sin \gamma}$ and $\dist(x(t),\calF_p)=(t-1)\|(\frac{1}{\sin \gamma}, \frac{1}{\cos \gamma})\|$, whereby $\theta_p(x(t)) = \tfrac{1}{\cos\gamma}$ and hence $\thetax=\lim_{t\downarrow 1}\theta_p(x(t))=\tfrac{1}{\cos\gamma}$.  However, for $t<0$ we have $\dist(x(t),\mathbb{R}^2_+)=\frac{|t|}{\cos \gamma}$ and $\dist(x(t),\calF_p)= |t|\|(\frac{1}{\sin \gamma}, \frac{1}{\cos \gamma})\|$, which yields $\theta_p(x(t))=\tfrac{1}{\sin\gamma}$.  Therefore $\sup_{x\in V_p\setminus\calF_p}\theta_p(x) = \max\{ \tfrac{1}{\cos \gamma}, \tfrac{1}{\sin \gamma} \}$ and the ratio of $\sup_{x\in V_p\setminus\calF_p}\theta_p(x)$ to $\thetax$ is equal to $\max\{1, \tfrac{\cos \gamma}{\sin \gamma} \} \rightarrow +\infty$ as $\gamma \downarrow 0$.

	The second condition measure is the ``\LPsharp''~and is denoted by $\mu_p$, and is defined as follows.
	\begin{definition}[\LPsharp]\label{def: sharpness mu} Let $H_p^\star$ denote the optimal objective hyperplane, i.e., $H_p^\star := \{\hat{x} \in \mathbb{R}^n:c^\top \hat{x}  = c^\top x^\star\}$ for an $x^\star \in \calX^\star$. The \LPsharp~$\mu_p$ of the primal problem is defined as:
	\begin{equation}\label{topsyturvy}
	\mu_p : = \inf_{x \in \calF_p\setminus\calX^\star} \frac{\dist(x,\ V_p\cap H_p^\star  )}{\dist(x,\ \calX^\star)} 
		\ ,
	\end{equation}
	which is the infimum of the ratio between the distance from $x$ to $V_p \cap H_p^\star$ and the distance from $x$ to the optimal set $\calX^\star$. 
	\end{definition}
	\noindent 
	Intuitively speaking, the \LPsharp~measures how quickly the objective function grows away from the optimal solution set $\calX^\star$ among all feasible points. In the case of LP it is easy to see that $\mu_p >0$.

Notions of sharpness were perhaps first introduced by Polyak in \cite{polyak1979sharp} as a useful analytical tool in convex minimization. For example, sharpness plus some mild smoothness assumptions can lead to linear convergence of the subgradient descent method via the use of restarts, see \cite{yang2018rsg}. \cite{applegate2023faster} generalizes the sharpness concept from convex optimization to primal-dual saddlepoint problems by defining sharpness for the normalized duality gap functional \eqref{fearofheights_intro}. Here we apply the notion of sharpness directly (and naturally) to the LP optimization problem itself.

The two condition measures $\thetax$ and $\mu_p$ are defined for the primal problem in the nonnegative (``cone'') variable $x$.  We define analogous condition measures for the dual problem in terms of the slack variables $s$ of the dual constraints, namely the variables $s := c - A^\top y$. Specifically, let $\calS^\star$ denote the set of optimal slack variable values in the dual LP.  Then it is straightforward to define dual counterparts $\thetas$ and $\mu_d$ of the primal condition measures $\thetax$ and $\mu_p$; see Section~\ref{restarts} for the formal definitions. Additionally, we let $\kappa$ denote the standard condition number of $A$, which is defined to be the ratio of the largest to the smallest positive singular value of $A$.

The main result of this paper is a new computational guarantee for rPDHG that depends on the above condition measures, which we now describe. We suppose that $c$ satisfies $Ac = 0$ (which can be easily enforced by projecting $c$ onto the nullspace of $A$ as part of a presolve scheme), and we suppose that the step-sizes $\tau$ and $\sigma$ are chosen in a certain way that is described in detail in Section \ref{restarts}.  Our main result (Theorem \ref{thm overall complexity}) is as follows:
\begin{mainresult}\label{main result}
(Less formal restatement of Theorem \ref{thm overall complexity}) Under the above conditions rPDHG requires at most 
\begin{equation}\label{eq overall complexity intro}
		O\left( \calN \cdot \ln\left[  \frac{ \calN \calD\eps_0}{ \eps}\right]\right)
	\end{equation}
iterations of \textsc{PDHGstep} in order to compute a pair $(x,s)$ of primal solution and dual slack that satisfies $\max\{\dist(x,\calX^\star),\dist(s,\calS^\star)\} \le \eps$, where $\calN$ and $\calD$ are defined as follows: 
\begin{equation}\label{eq:def_N}
		\calN := \kappa \cdot \left(\frac{1}{\mu_p}+\frac{1}{\mu_d}\right) \left(\thetax+\thetas+\frac{\|x^\star\|}{\|b\|_{Q}}+\frac{\|s^\star\|}{\|c\|}\right) 
	\end{equation}
and 
	\begin{equation}\label{eq:def_D}
		\calD:=32e\cdot \kappa \cdot \max\left\{\frac{\|c\|}{\|b\|_{Q}}, \frac{\|b\|_{Q}}{\|c\|}\right\} \ , 
	\end{equation}
and $\eps_0 := \max\{\dist(x^0,\calX^\star),\dist(s^0,\calS^\star)\}$ is the error of the initial iterate $(x^0,s^0)$ measured using the distance to the set of optimal solutions. Here $x^\star$ and $s^\star$ are the least norm optimal primal solution and dual slack solution, respectively.  Also $\|b\|_Q$ denotes $\|A^\top (AA^\top)^\dagger b\|$, in which $Q := (AA^\top)^{-1}$ if $A$ has full row rank. The quantity $e$ is the base of the natural logarithm.
\end{mainresult}

Let us now examine the components of the iteration bound in Main Result \ref{main result} a bit closer. First notice that the ratio of the initial error to the target error $\eps_0/\eps $ appears inside the logarithm term and is a consequence of the global linear convergence property of the algorithm.  The quantity $\condN$ appears both inside and outside of the logarithm term and itself involves the condition number $\kappa$ of the matrix $A$, plus six other quantities.  These are $\mu_p$ and $\mu_d$ (the \LPsharp~for both the primal and dual problems), $\thetax$ and $\thetas$ (the \limitinger~for both the primal and dual problems), as well as $\frac{\|x^\star\|}{\|b\|_Q}$ and $ \frac{\|s^\star\|}{ \|c\|} $. Notice that higher values of $\mu_p^{-1}$, $\mu_d^{-1}$, $\thetax$, and/or $\thetas$ result in a higher value of $\condN$.  Also notice that all four cross-terms between primal and dual \limitinger s  and the reciprocals of the \LPsharp~are present in $\condN$, as well as all four cross-terms between $\frac{\|x^\star\|}{\|b\|_{Q}},\frac{\|s^\star\|}{\|c\|}$ and the reciprocals of the \LPsharp. The numerator in $\frac{\|x^\star\|}{\|b\|_Q}$ is the norm of the least-norm primal optimal solution, which measures the stability of the dual problem under perturbation of $c$. In Fact \ref{fact: symmetric LP formulation} of Section \ref{secpreliminaries} we show that the denominator $\|b\|_Q$ is equal to $\dist(0,V_p)$, which is the distance from $0$ to $V_p$, and so $\|b\|_Q$ can be interpreted as a lower bound on the norm of any (and every) feasible or optimal solution of the primal. Therefore the quotient $\frac{\|x^\star\|}{\|b\|_Q}$ is equal to the geometric measure $\frac{\dist(0,\calX^\star)}{\dist(0,V_p)}$ and can be interpreted as a relative measure of stability or relative distance to optima.  A similar interpretation holds for $ \frac{\|s^\star\|}{ \|c\|} $. 

We note that $\calN$ is the only quantity outside of the logarithm term in the iteration bound and hence is the quantity characterizing the rate of linear convergence.  We will shortly focus on $\calN$ in our discussion and comments.

The quantity $\calD$, which appears only in the logarithm term of the iteration bound, is a (positive) scale-invariant measure of the problem data, and is less important since it only appears inside the logarithm term in \eqref{eq overall complexity intro}. The proofs leading to Main Result \ref{main result} actually use a similar strategy as in \cite{applegate2023faster}.  Roughly speaking, these proofs proceed by showing that a sharpness condition akin to \eqref{intro:sharpness} (but using a different norm) holds for $\alpha = \frac{1}{\condN}$ for all iterates, see Lemma \ref{lm: Mdistance upper bounded by normalized duality gap} in Section \ref{restarts}.

\subsection{Remarks on Main Result \ref{main result}}\label{subsec: comments}

\paragraph{Geometric nature of the new iteration bound.}
The iteration bound \eqref{eq overall complexity intro} in Main Result \ref{main result}, as well as the specific quantity $\calN$ that is outside the logarithm term, essentially relies on the primal and dual \limitinger~values $\thetax$ and $\thetas$, the primal and dual \LPsharp~values $\mu_p$ and $\mu_d$, and the minimum norms of solutions $x^\star$ and $s^\star$, as well as on data metrics involving the condition number $\kappa$ of $A$ and other norms of the data $(A, b, c)$.  The primal and dual \limitinger~values and the \LPsharp~values are all geometric in nature and (in our view) arise naturally in the study of the behavior or ``conditioning'' of an LP instance at its optima. Recall that $\thetax$ and $\thetas$ are local supremums of the error ratios $\theta_p(x)$ and $\theta_d(s)$ near $\calX^\star$ and $\calS^\star$, and $\mu_p$ and $\mu_d$ quantify the sharpness of the objective function near the optimal solution set. In this regard $\thetax$, $\thetas$, $\mu_p$, and $\mu_d$ depend only on the optimal solution set and an arbitrarily small neighborhood around the optimal solution set. Especially because the bound in Main Result \ref{main result} is described only by local information around the optima, it is likely to be significantly tighter than \eqref{intro:complexity} which depends on the Hoffman constant $\calH(K)$. (An ill-conditioned basis at a non-optimal extreme point may significantly increase $\calH(K)$, yet it will have no impact on $\calN$.) 

\paragraph{Dependence only on local behavior near the optima.} Because the iteration bound in Main Result \ref{main result} only depends on local behavior near the optimal solution set, it therefore shows that the performance of PDHG -- at least in theory -- is only tied to local properties of the LP instance local to the optimal solution set. 

\paragraph{Computability of the bound in Main Result \ref{main result}.}
From both a practical and theoretical perspective, it is important to ask whether the bound in Main Result \ref{main result} aligns with computational practice. In other words, when applied to problems that arise in practice do problems with smaller values of \eqref{eq overall complexity intro} require fewer iterations of rPDHG than problems with larger such values? (More generally it is important to ask this question for any algorithm for any optimization problem; however it is all the more important for LP given its pervasive use in practice, and it is important for rPDHG in order to better understand whether/where one might discover improvements in the theory or in practice in this nascent stage.)  In order to answer this and related questions it is necessary to efficiently compute the component quantities involved in \eqref{eq overall complexity intro}.  The quantities $\|b\|_Q$, $\|c\|$, and $\kappa$ are not difficult to compute (or estimate with high accuracy).  Also $\|x^\star\|$ and $\|s^\star\|$ are readily computable once an optimal solution has been computed.  Hence the challenge in computing the bound in Main Result \ref{main result} lies in computing $\theta_p^\star$ and $\mu_p$ and their analogous dual quantities. Let us first consider $\theta_p^\star$.  While we have not uncovered a direct way to compute $\theta_p^\star$ exactly, we have developed an efficient way to compute an upper bound $\theta_p^\star$ via the following result:  
\begin{property}\label{texas primal} {\bf (essentially Proposition \ref{cusco}.)} Suppose $x_a \in \calX^\star$ and there exists $R_a$ for which $\calX^\star \subset \{x: \|x-x_a \| \le R_a\}$, then it holds that $ \thetax \le G^\star$ for $G^\star$ defined as follows:
	\begin{equation}\label{rrr2d2 primal}  
			G^\star := \ \inf_{r >0, \ x\in \mathbb{R}^n}  \displaystyle\frac{R_a+\|x-x_a\|}{r} \quad \operatorname{s.t.} \ x \in V_p , \ x \ge r \cdot e \ . 
	\end{equation} 
\end{property}
\noindent
In order to compute $G^\star$ above one needs to know a ball containing the optimal solution set; if $\calX^\star$ is a singleton then it is sufficient to know an optimal solution, and if there are multiple optima then the analytic center of $\calX^\star$ can furnish such information, see Sonnevend \cite{son1}, also \cite{nesterov1994interior}.  Property \ref{texas primal} essentially states that $\thetax$ cannot be too large if (i) the radius of the optimal solution set of \eqref{pro: general primal LP} is not too large, and (ii) there is a feasible solution $x$ that is not too close to the boundary of $\mathbb{R}^n_+$ and not too far from the optimal solution set. Such a solution is related to the concept of a ``reliable solution'' in \cite{EpeFre00}. Similar results also hold for $\thetas$. This result is formally stated as the first assertion of Proposition \ref{cusco} in Section \ref{sec error ratio}, where we will also show that the optimization problem in \eqref{rrr2d2 primal} can be reformulated as a convex conic optimization problem with $m$ linear equalities, $n+1$ linear inequalities, and one second-order cone.

Let us now consider the computation of \LPsharp~$\mu_p$.  Not surprisingly, there is a nice polyhedral characterization of \LPsharp~$\mu_p$ that enables its computation, which we develop in Section~\ref{sec sharpness}.  If the optimal solution set is a singleton, then the LP sharpness $\mu_p$ is the smallest objective function growth rate along all of the edges of $\calF_p$ emanating from $x^\star$, which can be computed easily if the number of such edges is not excessive.  For  LP problems with multiple optimal solutions, computing $\mu_p$ requires computing the smallest sharpness along all edges of $\calF$ that intersect $\calX^\star$, which might be more challenging.  A similar approach applies to computing $\mu_d$.  For details see Section~\ref{sec sharpness}.

\paragraph{Relation to other condition numbers.}
Especially since the condition measures \limitinger~and \LPsharp~play the central role in Main Result \ref{main result}, it is useful to understand how they may be related to other more traditional condition measures for LP, such as Renegar's data-perturbation condition numbers \cite{renegar1994some}.  It turns out that the \limitinger~is upper-bounded by (and hence is tighter than) a simple quantity involving the data-perturbation condition number of Renegar \cite{renegar1994some}, see Corollary \ref{cor: theta with infeasibility}.  We also show that \LPsharp~$\mu_p$ is related to the stability of $\calX^\star$ under perturbation of the objective function vector $c$; in fact $\mu_p$ is equal to the least-norm relative perturbation $\Delta c$ of $c$ for which the new optimal solution set is not a subset of existing optimal solution set, see Theorem \ref{thm: sharpness and perturbation}. Similar arguments also hold for $\mu_d$.

\paragraph{Invariance under simple scalar rescaling.} It has been observed in practice that, with proper choice of step-sizes, rPDHG's performance is invariant under the scalar rescaling $(\alpha A, \beta b,\gamma c)$ of the LP instance data $(A,b,c)$ for $\alpha, \beta, \gamma >0$, see \cite{applegate2021practical}. Notice that this observation is in synch with the iteration bound in Main Result \ref{main result}. To see this, notice that the quantity $\calN$ in \eqref{eq:def_N}  is invariant under scalar rescaling, since in particular each of the condition measures -- $\mu_p$, $\mu_d$, $\thetax$, and $\thetas$ is invariant under the rescaling, as is the matrix condition number $\kappa$. Likewise, because $\frac{\|x^\star\|}{\|b\|_Q}$ and $\frac{\|s^\star\|}{\|c\|}$ can be interpreted as the relative distances to optima, they are also invariant under the rescaling.  Curiously, the quantity $\calD$ (which only appears inside the logarithm term) is not invariant under rescaling; while the first term of $\calD$ (which is $\kappa$) is scale invariant, the second term is always at least $1$ and setting $\alpha = \beta = \gamma = \|b\|_Q/\|c\|$ results in $\calD = \kappa$.

\vspace{15pt} 

In addition to the above desirable features of the iteration bound in Main Result \ref{main result}, the bound also suggests ways to think about practical enhancements of rPDHG to improve performance, in particular row-preconditioning of $A$ as well as tuning the step-sizes $\tau$ and $\sigma$, which we now discuss.
\vspace{-5pt} 

\paragraph{Row-preconditioning of $A$.} 
 It has been observed in practice that heuristic row- and column-preconditioning of $A$ can improve the practical performance of rPDHG, see \cite{applegate2021practical}. The iteration bound in Main Result \ref{main result} provides a theoretical justification of the value of row-preconditioning as follows. Observe from Main Result \ref{main result} that the iteration bound is at least linear in the matrix condition number $\kappa$ which appears outside the logarithm term (and inside the logarithm term as well).  Now consider the row-preconditioned system $HAx = Hb$ for some rank-$m$ matrix $H$.  Replacing $(A,b)$ with $(HA,Hb)$ does not change the geometry of the primal $x$ or dual $s$ variables, and so leaves $\mu_p$, $\mu_d$, $\thetax$, and $\thetas$ invariant.  However, it does change the value of $\kappa$, and thus heuristics to compute $H$ that will reduce the value of $\kappa$ will have the effect of reducing the theoretical iteration bound in Main Result \ref{main result}.  Therefore, row-preconditioning of $A$ is a natural way to improve the performance of the algorithm -- at least in theory. Furthermore, in Section \ref{sec experiments} we explore and confirm the practical effect of row-preconditioning.

\paragraph{Tuning the ratio of primal and dual step-sizes.}
The theory for PDHG is premised on the primal and dual step-sizes $\tau$ and $\sigma$ satisfying $\tau \cdot \sigma \le (\sigma_{\max} ^+(A))^{-2}$ where $\sigma_{\max} ^+(A)$ is the largest positive singular value of $A$, see \cite{chambolle2011first,applegate2023faster}.  However, there is leeway in the ratio of the stepsizes; notice that for $\gamma>0$ if we replace $(\tau,\sigma)\rightarrow (\gamma \tau, \sigma/\gamma) $ then the product $\tau \cdot \sigma$ is unchanged but the stepsize ratio $\tau/\sigma$ changes by $\gamma^2$.  Furthermore, it has been observed in practice that tuning the ratio $\tau/\sigma$ can significantly improve the performance of rPDHG, see \cite{applegate2021practical,applegate2023faster}. Our analysis points to theoretical benefits in the iteration bound of rPDHG if the stepsize ratio is tuned in a special way.  In Theorem \ref{thm special step size complexity} in Section \ref{restarts} we show that a specially chosen step-size ratio leads to an iteration bound with a similar structure as in \eqref{eq overall complexity intro} but with $\calN$ and $\calD$ replaced by: 

\begin{equation}\label{eq smart N intro}
	\widehat{\condN} := \kappa \cdot \left(\frac{\thetax }{\mu_p}  +\frac{\thetas}{\mu_d } +\frac{\|x^\star\|}{\mu_d\|b\|_{Q}}+\frac{\|s^\star\|}{\mu_p\|c\|}\right) \ 
\end{equation} and
\begin{equation}\label{eq smart D intro}
	\widehat{\calD} := 32e\cdot \kappa \cdot \max\left\{\frac{\mu_p\|c\|}{\mu_d\|b\|_{Q}}, \frac{\mu_d\|b\|_{Q}}{\mu_p\|c\|}\right\} \ .
\end{equation} 
Since the quantity  $\widehat{\calD}$ only appears in the logarithm term, let us focus on the quantity $\widehat{\calN}$ as compared to ${\calN}$.  Notice that $\widehat{\calN}$ contains fewer cross-terms involving $\mu_p$, $\mu_d$, $\thetax$, $\thetas$, $\frac{\|x^\star\|}{\|b\|_{Q}}$ and $\frac{\|s^\star\|}{\|c\|}$, and so has the potential to be significantly smaller than $\calN$.  This lends theoretical credence to the value of tuning the step-size ratio in practical implementations of rPDHG. Indeed, we confirm the benefit of this strategy in our experiments in Section~\ref{sec experiments}.  The special stepsize formula that leads to this theoretical improvement is presented in equation \eqref{eq smart step size 2} in Theorem \ref{thm special step size complexity}.  Notice that the stepsize formula in \eqref{eq smart step size 2} involves the LP sharpness quantities $\mu_p$ and $\mu_d$.  Unfortunately, these two quantities are typically not known {\em a priori} nor are they easy to estimate, and for this reason the improved iteration bound involving $\widehat{\calN}$ and $\widehat{\calD}$ is essentially just theoretical in nature.  Nevertheless, the improved bound points to the usefulness of heuristically tuning the step-size ratio in practical implementations of PDHG.

We end this section with a review of related works for large-scale LP.

% local measures of the error ratios, while the $\calH(K)$ is a global measure.  The \LPsharp~$\mu_p$ and $\mu_d$ are also local measures of the sharpness of the primal and dual problems, while the sharpness constant $\alpha$ in \eqref{intro:complexity} is a global measure.  In addition, the \limitinger s and \LPsharp~are invariant under positive rescaling of the data $A$, $b$, or $c$, while the Hoffman constant $\calH(K)$ is not.

\subsection{Related works for large-scale LP}

In addition to \cite{applegate2021practical,applegate2023faster} discussed earlier, there are several other investigations and analyses of the performance of PDHG and its variants for solving LP problems.
\cite{lu2025geometry} studies PDHG (without restarts) applied to LP instances, and uncovers a two-phase behavior of PDHG: the initial phase is characterized by sublinear convergence, followed by a second phase with linear convergence. The linear convergence of the latter phase is upper bounded using the Hoffman constant of a reduced linear system defined by the limiting point of the algorithm's trajectory.  The duration of the initial phase inversely depends on the smallest nonzero of the limiting point.  
\cite{lu2021nearly} introduces a stochastic variant of PDHG for solving LPs. \cite{applegate2021infeasibility} studies how to use PDHG for detecting infeasible LP instances, and \cite{lu2022infimal} shows that the PDHG without restarts also achieves linear convergence on LPs, though at a slower rate compared to rPDHG.
\cite{hinder2024worst} shows that the rPDHG has polynomial-time complexity for totally unimodular LPs, and 
\cite{lu2025practical} proposes to solve convex QP using PDHG-based methods.

Concurrent with our own efforts in revising the present work, several more papers on PDHG have been posted on arXiv. From a computational perspective, recent efforts include improved implementations of PDHG-based LP solvers \cite{lu2023cupdlp-c}, other extensions to convex quadratic and conic optimization  \cite{huang2025restartednew,lin2024pdcs}, as well as \cite{lu2024restarted,xiong2024role,li2024pdhg}. From a theoretical perspective, recent works have provided refined analyses for special families of LPs \cite{xiong2024accessible,lu2024pdot}, average-case complexity guarantees \cite{xiong2025high}, and convex conic optimization extensions \cite{xiong2024role}. 

In addition to PDHG, a number of other FOMs have also been studied for solving huge-scale LP instances. 
Early efforts included the steepest ascent method \cite{brown1951computational}, feasible direction methods \cite{zoutendijk1960methods}, the projected gradient algorithm \cite{lemke1961constrained}, and others.
More recently, several more practical FOM-based solvers have been proposed.
ABIP \cite{lin2021admm,deng2025enhancednew} solves conic linear programs (including linear programs) using an ADMM-based interior-point method applied to the homogeneous self-dual embedding. 
SCS \cite{o2016conic,o2021operator} employs a similar ADMM-based approach to solve the homogeneous self-dual embedding. 
OSQP \cite{osqp,osqp-gpu} uses an ADMM-based method to solve convex quadratic programs, which include LPs. 
HPR-LP \cite{chen2025hprnew} is a recently developed GPU-based solver for LP that uses the Halpern Peaceman-Rachford method with semi-proximal terms.
\cite{li2020asymptotically} proposes a semismooth Newton augmented Lagrangian method for LP problems and proves its superlinear convergence. 
ECLIPSE \cite{basu2020eclipse} is a distributed LP solver designed specifically for addressing large-scale LPs encountered in web applications.

\subsection{Notation}
For a matrix $A\in\mathbb{R}^{m\times n}$, let $\operatorname{Null}(A):=\{x\in\mathbb{R}^n:Ax = 0\}$ denote the null space of $A$ and $\operatorname{Im}(A) :=\{Ax:x\in\mathbb{R}^n\}$ denote the image of $A$. For any set $\calX\subset \mathbb{R}^n$,  let $P_\calX: \mathbb{R}^n \to \mathbb{R}^n$ denote the Euclidean projection onto $\calX$, namely, $P_\calX(x) := \arg\min_{\hat{x}\in \calX} \|x - \hat{x}\|$. Unless otherwise specified, $\|\cdot\|$ denotes the Euclidean norm. For $M \in \mathbb{S}_{+}^{n}$, the set of symmetric positive-semi-definite matrices in $\mathbb{R}^{n\times n}$, we use $\|\cdot\|_M$ to denote the semi-norm $\|z\|_M :=\sqrt{z^\top Mz}$. For any $x \in \mathbb{R}^n$ and $\calX\subset \mathbb{R}^n$, the Euclidean distance between $x$ and $\calX$ is denoted by $\dist(x,\calX):= \min_{\hat{x} \in \calX} \|x-\hat{x}\|$ and the $M$-norm distance between $x$ and $\calX$ is denoted by $\dist_M(x,\calX):= \min_{\hat{x} \in \calX} \|x-\hat{x}\|_M$. 
For simplicity of notation, we use $[n]$ to denote the set $\{1,2,\dots,n\}$. For $A \in \mathbb{R}^{n\times n}$, $A^\dagger$ denotes the Moore-Penrose inverse of $A$. For any matrix $A$,  $\sigma_{\max} ^+(A)$ and $\sigma_{\min}^+(A)$ denote the largest and smallest non-zero singular values of $A$. For an affine subset $V$, let $\linV$ denote the associated linear subspace of $V$, namely $V=\linV + v$ for every $v \in V$. 
Let $\mathbb{R}^n_+$ and $\mathbb{R}^n_{++}$ denote the nonnegative and strictly positive orthant in $\mathbb{R}^n$, respectively. Let $e$ denote the vector of ones, namely $e=(1, \ldots, 1)^\top$ whose dimension is dictated by context. For a vector $v \in \mathbb{R}^n$, $v^+$ and $v^-$ respectively denote the vector of positive parts and negative parts of $v$, i.e., the components of $v^+$ and $v^-$ are $(v^+)_i = \max\{v_i, 0\}$ and $(v^-)_i = \max\{-v_i, 0\}$ for $i \in [n]$. The operator norm $\|A\|$ of a matrix $A$ is defined as $\|A\| = \sup_{x \neq 0} \frac{\|Ax\|}{\|x\|}$.
For a symmetric matrix $A$, $A \succeq 0$ means $A\in\mathbb{S}^n_+$. 
For a linear subspace $\linV \subset \mathbb{R}^n$, $\linV^\bot$ denotes the orthogonal complement of $\linV$. 

\subsection{Organization}

The other sections of this paper are organized as follows. Section \ref{secpreliminaries} contains preliminaries for LP and presents a detailed review of PDHG. In Section \ref{restarts} we present our new computational guarantees for rPDHG for LP based on \limitinger~and \LPsharp. In Sections \ref{sec error ratio} and \ref{sec sharpness} we discuss and derive computable upper bounds for \limitinger~and a computable representation of \LPsharp, and we relate both of these condition measures to other condition numbers. Finally, in Section \ref{sec experiments} we present computational experiments that give credence to both our theoretical iteration bounds and the effectiveness of heuristic enhancements inspired by these iteration bounds.
 
\section{Preliminaries for LP and PDHG}\label{secpreliminaries}

As mentioned in Section \ref{subsec: main results}, we use the lens of focusing on the nonnegative variables $x$ of the primal and the nonnegative dual variables $s$ (the slack variables) of the dual. We first review the ``symmetric'' formulation of primal and dual LP problems from this perspective. 
 \subsection{Symmetric primal and dual formulations of LP}\label{subsec: symmetric primal dual LP}

The dual problem \eqref{pro: standard dual LP} can be formulated with explicit slack variables $s$ as follows: \begin{equation}\label{pro: general dual LP}
		 \max_{y \in \mathbb{R}^m, \ s\in\mathbb{R}^n}  \ b^\top y  \quad	\text{s.t.}  \ A^\top y + s = c, \ s\ge 0 \ .
 \end{equation}
Define $q := A^\top (AA^\top)^\dagger b$ ; then \eqref{pro: general dual LP} is equivalent to the following (dual) problem on $s$:
 \begin{equation}\label{pro: general dual LP on s}
	 \max_{s\in\mathbb{R}^n}  \ q^\top ( c- s) \quad \  \text{s.t.}  \ s \in c + \operatorname{Im}(A^\top), \ s\ge 0 \ ,
 \end{equation}
and the corresponding dual solution in the variable $y$ is any such $y$ for which $A^\top y = c-s$.
(This is because for any dual feasible solutions $y$ satisfying $A^\top y = c-s$, the corresponding objective value of $y$ is equal to $q^\top ( c- s)$: 
 $b^\top y =  q^\top A^\top y =  q^\top(c-s)$,
 where the first equality is due to $Aq = b$.)  
 
 Let $V_p := \{x \in\mathbb{R}^n: Ax = b\} = q + \operatorname{Null}(A)$ and $V_d := c + \operatorname{Im}(A^\top)$. 
 To summarize, we can rewrite the primal problem \eqref{pro: general primal LP} and the dual problem \eqref{pro: general dual LP on s} in the following symmetric formats:
 \begin{equation}\label{pro: primal dual reformulated LP}
	 \begin{aligned}
		 &\text{(P)}&\calX^\star :=\arg\min_{x\in\mathbb{R}^n}& \  c^\top x & \quad&\text{(D)}&\calS^\star :=\arg\max_{s\in\mathbb{R}^n}& \  q^\top (c - s) \\
		 &&\text{s.t.} & \ x \in \calF_p:= V_p\cap\mathbb{R}^n_+
		 &\quad & &\text{s.t.} & \ s \in \calF_d:= V_d\cap\mathbb{R}^n_+ \\
		 &&&\ \quad  \quad V_p:= q + \operatorname{Null}(A)& & & & \quad \quad \ V_d:= c + \operatorname{Im}(A^\top)
	 \end{aligned}
 \end{equation} 
 This reformulation of the dual was, to the best of our knowledge, first proposed in \cite{todd1990centered}. Here the sets of primal and dual optima are $\calX^\star$ and $\calS^\star$, respectively, and we use $\calY^\star$ to denote the corresponding optimal solutions $y$ associated with $\calS^\star$.  
 
 [From a computational perspective, none of the quantities specific to the symmetric reformulation actually need to be computed, i.e., we do not need to compute $q$ or $A^\dagger$.  We present these objects as they frame our analysis and our results.]
   
We now briefly review optimality conditions for \eqref{pro: primal dual reformulated LP}. Note that the duality gap in \eqref{pro: primal dual reformulated LP} is equal to $\gap(x,s):=c^\top x - q^\top (c - s) $, and  a solution pair $(x,s)$ is optimal for \eqref{pro: primal dual reformulated LP} if and only if the following conditions are met:
\begin{itemize}[nosep]
	\item Primal feasibility: $\dist(x,V_p) = 0$ and $\dist(x , \mathbb{R}^n_+) = 0$,
	\item Dual feasibility: $\dist(s, V_d) = 0$ and $\dist(s,\mathbb{R}^n_+) = 0$, and
	\item Nonpositive duality gap: $\gap(x,s): = c^\top x - q^\top (c - s) \le 0$.
\end{itemize}
The optimal primal-dual solution sets can be directly written as
$$
\calX^\star\times \calS^\star :=
\left\{
(x,s)
\left|
 x\in V_p, \ x \in \mathbb{R}^n_+, \  s\in V_d, \ s \in \mathbb{R}^n_+, \ \gap(x,s) \le 0
\right.
\right\} \ .
$$
In our theoretical development we will measure the error of a non-optimal pair $(x,s)$ using the distance to optima, defined as:
\begin{equation}\label{xmas}\calE_d(x,s) := \max \{ \dist(x,\calX^\star),\dist(s,\calS^\star)\} \ . \end{equation}
(The distance to optima is not conveniently computable, and so in practice it is more typical to work with the relative error defined as $\calE_r (x,y) := \frac{\|Ax^+ - b\|}{1+ \|b\|} + \frac{\|(c-A^\top y)^-\|}{1+\|c\|} + \frac{|c^\top x^+ - b^\top y|}{1 +|c^\top x^+ |+| b^\top y| }$.  It is straightforward to show that the relative error $\calE_r (x,y)$ is upper bounded by a constant factor times the distance to optima $\calE_d(x,s)$, for a constant depending only on the data $(A,b,c)$, see Remark \ref{stargaze} in Appendix \ref{phoebe}.)

It can also be observed from \eqref{pro: primal dual reformulated LP} that the linear subspaces associated with the primal and dual problems are orthogonal to each other.
Let us denote by $\linVp$ and $\linVd$ the linear subspaces associated with the affine subspaces $V_p$ and $V_d$.  Then $\linVp$ and $\linVd$ are orthogonal complements. The following fact collects some other useful properties of the symmetric formulation \eqref{pro: primal dual reformulated LP}:
\begin{fact}\label{fact: symmetric LP formulation}
	In the symmetric formulation \eqref{pro: primal dual reformulated LP}, $\linVd$ is the orthogonal complement of  $\linVp$, i.e., 
    $\linVd = \linVp^\bot$.  Furthermore, $P_{\linVp}(c) \in \linVp$ and  $P_{\linVp}(c) = \arg\min_{v\in V_d} \|v\| $, and $q \in \linVd $ and $q = \arg\min_{v \in V_p}\|v\| $.
\end{fact}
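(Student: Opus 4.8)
The plan is to reduce the entire statement to two standard facts of finite-dimensional linear algebra: (i) for any matrix $A$ one has $\operatorname{Null}(A)^\bot = \operatorname{Im}(A^\top)$ (the fundamental theorem of linear algebra), and (ii) for a linear subspace $W\subseteq\mathbb{R}^n$ and any $u\in\mathbb{R}^n$, the minimum-norm element of the affine set $u+W$ is $P_{W^\bot}(u)=u-P_W(u)$; equivalently, it is the unique point of $u+W$ that also lies in $W^\bot$, and it does not depend on which representative $u$ of the affine set one chooses. Fact (ii) is proved in one line by Pythagoras: decompose $u=P_W(u)+P_{W^\bot}(u)$, note every element of $u+W$ has the form $P_{W^\bot}(u)+w$ with $w\in W$, and $\|P_{W^\bot}(u)+w\|^2=\|P_{W^\bot}(u)\|^2+\|w\|^2$ since $P_{W^\bot}(u)\bot w$, which is minimized exactly at $w=0$.

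With these in hand I would proceed as follows. First, read off from the definitions in \eqref{pro: primal dual reformulated LP} that $\linVp=\operatorname{Null}(A)$ and $\linVd=\operatorname{Im}(A^\top)$, since $V_p=q+\operatorname{Null}(A)$ and $V_d=c+\operatorname{Im}(A^\top)$. Then the orthogonality claim $\linVd=\linVp^\bot$ is exactly fact (i). For the claim about $c$: apply fact (ii) with $W=\linVd=\operatorname{Im}(A^\top)$ and $u=c$; since $W^\bot=\operatorname{Null}(A)=\linVp$, the minimum-norm point of $V_d=c+\operatorname{Im}(A^\top)$ is $P_{\linVp}(c)$, which by definition lies in $\linVp$ (and, written as $c-P_{\operatorname{Im}(A^\top)}(c)$, is manifestly a member of $c+\operatorname{Im}(A^\top)=V_d$, so the $\arg\min$ is legitimately attained there). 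For the claim about $q$: apply fact (ii) with $W=\linVp=\operatorname{Null}(A)$ and $u=q$; the minimum-norm point of $V_p=q+\operatorname{Null}(A)$ is $P_{\operatorname{Im}(A^\top)}(q)$, and because $q=A^\top(AA^\top)^\dagger b\in\operatorname{Im}(A^\top)=\linVd$ we have $P_{\operatorname{Im}(A^\top)}(q)=q$, giving simultaneously $q=\arg\min_{v\in V_p}\|v\|$ and $q\in\linVd$.

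I do not expect any genuine obstacle here — the statement is entirely a bookkeeping exercise with orthogonal decompositions. The only points requiring care are (a) keeping the affine sets $V_p,V_d$ rigorously distinct from their direction spaces $\linVp,\linVd$ when invoking fact (ii), and (b) noting that nonemptiness of the two $\arg\min$'s is automatic ($c\in V_d$, $q\in V_p$). If one additionally wishes to confirm $V_p=\{x:Ax=b\}$ (not needed for the statement as phrased, since $V_p$ is defined directly via $\operatorname{Null}(A)$), one uses $b\in\operatorname{Im}(A)$, which holds under Assumption~\ref{assump: general LP} because \eqref{pro: general primal LP} has a feasible (indeed optimal) solution, so that $Aq=AA^\top(AA^\top)^\dagger b=b$.
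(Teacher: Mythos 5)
Your proof is correct and complete. The paper states this as a \emph{Fact} without supplying a proof (presumably because it is elementary), and your argument is exactly the standard reasoning one would expect the authors to have in mind: identify $\linVp=\operatorname{Null}(A)$ and $\linVd=\operatorname{Im}(A^\top)$ from the definitions, invoke $\operatorname{Null}(A)^\bot=\operatorname{Im}(A^\top)$ for the orthogonality claim, and use the Pythagorean characterization of the minimum-norm element of an affine set for the two $\arg\min$ claims (with the observation $q\in\operatorname{Im}(A^\top)$ closing the last one).
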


Finally, we make the following assumption about \eqref{pro: general primal LP} and its dual problem \eqref{pro: general dual LP}.
\begin{assumption}\label{assump: general LP}
	We assume that the LP problem \eqref{pro: general primal LP} has an optimal solution, and non-optimal feasible solutions exist for the duality-paired problems \eqref{pro: general primal LP} and \eqref{pro: general dual LP}, and equivalently for \eqref{pro: primal dual reformulated LP}.
\end{assumption}

\subsection{Convergence properties of PDHG (without restarts) for LP }\label{subsec: sublinear convergence}

In this subsection, we review the ergodic convergence properties of PDHG (without restarts) for LP. The ergodic convergence is also known as the convergence of the average iterate. A sublinear ergodic convergence bound has been shown for PDHG in \cite{chambolle2016ergodic,applegate2023faster} for general convex-concave saddlepoint problems, whose error is measured in terms of the ``primal-dual gap'' associated with the saddlepoint problem. We briefly review these convergence results with a focus on two issues, namely (i) the role of the primal and dual step-sizes, and (ii) the convergence (to zero) of errors for the LP problem (measured using the distances to the constraints and the duality gap) instead of gap measures of the saddlepoint problem. The material presented in this subsection will be used later in Section \ref{restarts}. Complete proofs of the results in this subsection are deferred to Appendix \ref{sec:proof preliminaries}. 

One step of PDHG for LP \eqref{pro: saddle point LP} is defined in the function \textsc{PDHGstep} in \eqref{eq: one PDHG}. Note that we use $z:=(x,y)\in\mathbb{R}^{m+n}$ to denote the pair of primal and dual solutions $x$ and $y$,  and PDHG generates iterates by $z^+ \leftarrow \textsc{PDHGstep}(z)$.  We will freely use the notation $z$ with sub/superscripts and other modifications, so that it denotes the primal and dual solutions $(x,y)$ with the same sub/superscripts and other modifications, such as $\bar{z}^{k} = (\bar{x}^{k},\bar{y}^{k})$.

The convergence guarantees for PDHG rely on the step-sizes $\tau$ and $\sigma$ in \eqref{eq: one PDHG} being sufficiently small.  In particular, if the following condition is satisfied:
\begin{equation}\label{robsummer}
	M:=	\begin{pmatrix}
		\frac{1}{\tau}I_n & -A^\top \\
		-A & \frac{1}{\sigma}I_m
	\end{pmatrix}  \succeq 0 \ , % \ \text{ or equivalently }  \ \sigma\tau < \frac{1}{\sigma_{\max}^+(A)^2} \ ,
\end{equation}
then PDHG's average iterates will converge to a saddlepoint of the problem \eqref{pro: saddle point LP}  \cite{chambolle2011first,chambolle2016ergodic}, though the guaranteed rate of convergence is sublinear. The requirement \eqref{robsummer} is equivalently written as:
\begin{equation}\label{eq  general step size requirement}
	\tau > 0, \ \sigma >0, \ \text{ and } \tau\sigma \le  \left( \frac{1}{ \sigma_{\max}^+(A) } \right)^2 \ ,
\end{equation}
where $\sigma_{\max}^+(A)$ is the largest positive singular value of $A$.  
The matrix $M$ defined in \eqref{robsummer} turns out to be particularly useful in analyzing the convergence of PDHG through its induced inner product norm defined by $\| z \|_M := \sqrt{z^\top M z}$ (though it is only a semi-norm if $M$ is not positive definite), which will be used extensively in the rest of this paper. 

For notational convenience let us define:
\begin{equation}\label{eq  def lamdab min max}
	\lambda_{\max}:= \sigma_{\max}^+\left(A \right)\text{, }	\lambda_{\min}:= \sigma_{\min}^+\left(A \right), \text{ and }\kappa := \frac{\lambda_{\max}}{\lambda_{\min}}  \ .
\end{equation} Here $\kappa$ is the standard condition number of the matrix $A$. 
To measure the convergence of PDHG for LP, \cite{applegate2023faster} introduces the ``normalized duality gap,'' which we have briefly reviewed in \eqref{fearofheights_intro}, and whose formal definition is as follows:

\begin{definition}[Normalized duality gap, (4a) in \cite{applegate2023faster}]\label{fri}
	For any $z = (x,y)\in \mathbb{R}^n_+\times \mathbb{R}^m$ and $r > 0$, define
	$
	\widetilde{B}(r;z) := \{\hat{z} := (\hat{x},\hat{y}):  \hat{x}\ge 0 \text{ and } \|\hat{z} -z\|_M \le r   \}
	$.
	The normalized duality gap of the saddlepoint problem \eqref{pro: saddle point LP} is then defined as
	\begin{equation}\label{fearofheights}
		\rho(r;z) := \left(\frac{1}{r}\right)\max_{  \hat{z} \in \widetilde{B}(r;z) }  \big[ L(x,\hat{y}) - L(\hat{x},y) \big] \ .
	\end{equation}
\end{definition}
\noindent
Note in Definition \ref{fri} that $\widetilde{B}(r;z)$ is technically not a ball in the usual sense of the term, since the requirement that $\hat x \ge0$ means that $\widetilde{B}(r;z)$ is not necessarily symmetric relative to its center $z$. The normalized duality gap serves as an upper bound for evaluating error tolerances and distances to optimality, see \cite{applegate2023faster}, which also presents an efficient algorithm for approximating $\rho(r;z)$. 

The following lemma shows that the normalized duality gap provides an upper bound on both the distances to feasibility and the magnitude of the duality gap; this lemma is a variation of \cite[Lemma 4]{applegate2023faster} but measures distances instead of error tolerances. 

\begin{lemma}\label{lm: convergence of PHDG without restart}
	For any $r > 0$, $\bar{z} :=(\bar{x},\bar{y})$ such that $\bar{x} \ge 0$, and $\bar{s} := c - A^\top \bar{y}$,  the normalized duality gap $\rho(r; \bar z)$ provides the following bounds:
	\begin{enumerate}[nosep]
		\item Primal near-feasibility: $ \dist(\bar{x},V_p)  \le \frac{1}{\sqrt{\sigma} \lambda_{\min}}\cdot \rho(r;\bar{z})$ and $\dist(\bar{x},\mathbb{R}^n_+)  = 0$, 
		\item Dual near-feasibility: $\dist(\bar{s},V_d) = 0 $ and $\dist(\bar{s},\mathbb{R}^n_+) \le \frac{1}{\sqrt{\tau}} \cdot \rho(r;\bar{z}) $, and 
		\item Duality gap: $\gap(\bar{x},\bar{s})  \le   \max\{ r, \|\bar{z}\|_M\} \rho(r;\bar{z})$.
	\end{enumerate}
\end{lemma}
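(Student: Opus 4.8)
The plan is to unpack the definition of the normalized duality gap \eqref{fearofheights} and extract the three claimed bounds by making three specific (or specific-family) choices of the competitor point $\hat z \in B(r;\bar z)$, each tuned to certify one of the three quantities. Throughout I keep $\bar s := c - A^\top\bar y$ fixed, and I use the explicit form of the Lagrangian $L(x,y) = c^\top x + b^\top y - x^\top A^\top y$, so that $L(\bar x,\hat y) - L(\hat x,\bar y) = \bar x^\top(c - A^\top\bar y)\text{-type terms}$; more precisely $L(\bar x,\hat y) - L(\hat x, \bar y) = (\hat y - \bar y)^\top(b - A\bar x) + (\bar x - \hat x)^\top(c - A^\top\bar y) = (\hat y - \bar y)^\top(b - A\bar x) + (\bar x - \hat x)^\top \bar s$. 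Since $\rho(r;\bar z)$ is $(1/r)$ times the maximum of this expression over all $\hat z = (\hat x,\hat y)$ with $\hat x \ge 0$ and $\|\hat z - \bar z\|_M \le r$, every feasible choice of $\hat z$ gives a lower bound $r\,\rho(r;\bar z) \ge L(\bar x,\hat y) - L(\hat x,\bar y)$, which I then rearrange into the desired inequality.

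For the primal near-feasibility bound, I would freeze $\hat x = \bar x$ (legal since $\bar x \ge 0$) and move only in the $y$-block, choosing $\hat y - \bar y$ to be a small positive multiple of $b - A\bar x$, scaled to sit on the boundary $\|\hat z - \bar z\|_M \le r$. Because $\hat x = \bar x$ the $M$-norm of $(\,0,\hat y - \bar y\,)$ reduces (via the block structure \eqref{robsummer}) to $\tfrac1{\sigma}\|\hat y - \bar y\|^2$, so the admissible step length is $r\sqrt{\sigma}/\|b - A\bar x\|$; plugging in yields $r\,\rho(r;\bar z) \ge \sqrt{\sigma}\,\|b - A\bar x\|$. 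Finally I relate $\|b - A\bar x\|$ to $\dist(\bar x, V_p)$: since $V_p = \{x: Ax = b\}$ and the minimum singular value of $A$ on the row space is $\lambda_{\min}$, we have $\dist(\bar x, V_p) \le \|b - A\bar x\|/\lambda_{\min}$, giving the stated $\tfrac{1}{\sqrt\sigma\,\lambda_{\min}}\rho(r;\bar z)$. The statements $\dist(\bar x,\mathbb{R}^n_+) = 0$ and $\dist(\bar s, V_d) = 0$ are immediate from $\bar x \ge 0$ and $\bar s = c - A^\top \bar y \in c + \operatorname{Im}(A^\top) = V_d$.

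For the dual near-feasibility bound I do the mirror-image construction: freeze $\hat y = \bar y$ and perturb only $\hat x$, taking $\bar x - \hat x$ in the direction of $\bar s^{-}$ (the negative-part vector), i.e. decreasing coordinates of $\bar x$ only where $\bar s_i < 0$ — this keeps $\hat x \ge 0$ provided the step is small, and drives the term $(\bar x - \hat x)^\top \bar s$ up to $\|\bar s^{-}\|$ times the step length. With $\hat y = \bar y$ the $M$-norm of the displacement is $\tfrac1\tau\|\hat x - \bar x\|^2$, so the step length is $r\sqrt\tau/\|\bar s^{-}\|$, producing $r\,\rho(r;\bar z) \ge \sqrt\tau\,\|\bar s^{-}\| = \sqrt\tau\,\dist(\bar s,\mathbb{R}^n_+)$ (the last equality because projection onto $\mathbb{R}^n_+$ just zeros out negative coordinates). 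For the duality-gap bound, I take $\hat z$ along the segment from $\bar z$ toward the \emph{origin-shifted} point that exposes the gap: specifically I note $\gap(\bar x,\bar s) = c^\top\bar x - q^\top(c - \bar s) = c^\top\bar x - b^\top\bar y = L(\bar x,\bar y) - L(0,\bar y)$ type manipulation, and one checks that choosing $\hat x = 0$, $\hat y = \bar y$ (if $\|\bar z\|_M \le r$ this is admissible) yields $L(\bar x,\bar y) - L(0,\bar y) = \bar x^\top\bar s = \gap(\bar x,\bar s)$ after using $b^\top\bar y = q^\top(c-\bar s)$ which holds because $b = Aq$... actually I must be careful: $b^\top \bar y = q^\top A^\top \bar y = q^\top(c - \bar s)$ since $q\in\operatorname{Im}(A^\top)$ and Fact \ref{fact: symmetric LP formulation}. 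If $\|\bar z\|_M > r$, I instead take the point on the boundary in the direction from $\bar z$ to $0$, i.e. $\hat z = (1 - r/\|\bar z\|_M)\bar z$, which is admissible because $\hat x$ is a nonnegative scalar times $\bar x \ge 0$; evaluating and scaling the resulting linear lower bound by $\|\bar z\|_M/r$ produces the factor $\max\{r,\|\bar z\|_M\}$.

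The main obstacle I anticipate is bookkeeping in the duality-gap case: getting the admissible scaling right when $0 \notin B(r;\bar z)$, and correctly converting the quantity $L(\bar x,\bar y) - L(0,\bar y)$ into $\gap(\bar x,\bar s)$ via the identities of Fact \ref{fact: symmetric LP formulation} (that $b^\top y$ depends on $\bar y$ only through $\bar s$, and $q = A^\top(AA^\top)^\dagger b$). A secondary subtlety is the degenerate cases $b - A\bar x = 0$, $\bar s^{-} = 0$, or $\bar z = 0$, where the corresponding direction vanishes — but in each such case the claimed left-hand side is $0$ and the bound holds trivially, so these can be dispatched in a sentence.
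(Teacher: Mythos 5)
Your overall plan -- lower-bounding $\rho(r;\bar z)$ by plugging a tailored competitor $\hat z \in B(r;\bar z)$ into the definition of the normalized duality gap for each of the three claims -- is exactly the paper's strategy, and your primal near-feasibility argument (moving $\hat y$ in the direction $b - A\bar x$ rather than the paper's direction $w$ with $A^\top w = P_{V_p}(\bar x) - \bar x$, which amounts to replacing $w$ by $AA^\top w$) is a valid, slightly stronger variant. However, the other two items have genuine errors.

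\textbf{Dual near-feasibility: sign error and admissibility error.} You set $\bar x - \hat x$ in the direction of $\bar s^-$, i.e.\ $\hat x = \bar x - t\,\bar s^-$, and claim this ``drives $(\bar x - \hat x)^\top\bar s$ up to $\|\bar s^-\|$ times the step length.'' In fact $(\bar s^-)^\top\bar s = -\|\bar s^-\|^2$, so with your sign the Lagrangian difference equals $-t\|\bar s^-\|^2 < 0$ and certifies nothing. You must move the \emph{other} way, $\hat x = \bar x + t\,\bar s^-$, so that $(\bar x - \hat x)^\top\bar s = -t\,(\bar s^-)^\top\bar s = t\|\bar s^-\|^2 > 0$. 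Your admissibility reasoning is also wrong: decreasing $\bar x_i$ at coordinates where $\bar s_i < 0$ can make $\hat x_i < 0$ if $\bar x_i = 0$ there (the hypotheses allow this), so ``provided the step is small'' is false; by contrast, \emph{adding} the nonnegative vector $\bar s^-$ preserves $\hat x \ge 0$ with no caveat.

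\textbf{Duality gap: wrong choice of $\hat y$.} With $\hat x = 0$ and $\hat y = \bar y$ frozen, you get $L(\bar x, \hat y) - L(\hat x, \bar y) = L(\bar x,\bar y) - L(0,\bar y) = c^\top\bar x - \bar x^\top A^\top\bar y = \bar x^\top\bar s$, which is \emph{not} $\gap(\bar x,\bar s) = c^\top\bar x - b^\top\bar y$ unless $A\bar x = b$. The correct choice (used in the paper) shrinks both components of $\bar z$ toward the origin: set $\hat z := (1-\alpha)\bar z$ with $\alpha := \min\{r/\|\bar z\|_M, 1\}$. A direct bilinear expansion gives $L(\bar x,\hat y) - L(\hat x,\bar y) = \alpha(c^\top\bar x - b^\top\bar y) = \alpha\,\gap(\bar x,\bar s)$, and admissibility holds because $\hat x = (1-\alpha)\bar x$ is a nonnegative multiple of $\bar x \ge 0$. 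Your $\|\bar z\|_M > r$ branch already writes down the right competitor $\hat z = (1-r/\|\bar z\|_M)\bar z$, so applying it uniformly (with $\alpha = \min\{r/\|\bar z\|_M,1\}$) and handling $\bar z = 0$ trivially completes the argument.
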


A proof of Lemma \ref{lm: convergence of PHDG without restart} as well as other results in this section are given in Appendix \ref{guitar}. Let the $k$-th iterate of PDHG be denoted as $z^k:=(x^k,y^k)$ for $k=0,1,\ldots$, and let the average of the first $K$ iterates be denoted as  $\bar z^K = (\bar{x}^K,\bar{y}^K)  := \frac{1}{K}\sum_{i=1}^K (x^i,y^i)$ for $K \ge 1$.
The following lemma presents the sublinear convergence of the average iterates of PDHG for the saddlepoint LP formulation \eqref{pro: saddle point LP} in terms of the normalized duality gap.

\begin{lemma}\label{lm: rho sublinear PDHG}
	Suppose that  $\tau$ and $\sigma$ satisfy \eqref{eq  general step size requirement}. Then for all $K \ge 1$ it holds that
		\begin{equation}
			\rho(\|\bar{z}^K-z^0\|_M;\bar{z}^K) \le \frac{4\dist_M(z^0,\calZ^\star)}{K} \ .
		\end{equation}
\end{lemma}

We remark that Lemma \ref{lm: rho sublinear PDHG} can be viewed as an extension of Property 3 of \cite{applegate2023faster} to the case of different primal and dual stepsizes $\tau$ and $\sigma$. By combining Lemma \ref{lm: convergence of PHDG without restart} and Lemma \ref{lm: rho sublinear PDHG} and changing the norm, we obtain the following theorem regarding the ergodic behavior of PDHG in terms of distances to constraints and the duality gap.
\begin{theorem}\label{thm rho sublinear PDHG to LP natural} Suppose that  $\tau$ and $\sigma$ satisfy \eqref{eq  general step size requirement}, and suppose that PDHG is initiated with $z^0 = (x^0,y^0) := (0,0)$. Then for any $K \ge 1$ and $\bar{x}^K := \frac{1}{K}\sum_{i=1}^K x^i$ and $\bar{s}^K := \frac{1}{K} \sum_{i=1}^K (c - A^\top y^i)$, the following hold:
	\begin{enumerate}[nosep]
		\item Primal near-feasibility: 
		$ \dist(\bar{x}^K,V_p)  \le \frac{4\sqrt{2}}{K}
		{\left(\frac{\dist(0,\calX^\star)}{\sqrt{\sigma\tau}\lambda_{\min}}+\frac{\dist(c,\calS^\star)}{\sigma\lambda_{\min}^2}  \right)}$  and $\dist(\bar{x}^K,\mathbb{R}^n_+) = 0$ , 
		\item Dual near-feasibility: $\dist(\bar{s}^K,V_d)  = 0$ and 
		$\dist(\bar{s}^K,\mathbb{R}^n_+) \le 
		\frac{4\sqrt{2}}{K}
		\left(\frac{\dist(0,\calX^\star)}{\tau}+\frac{\dist(c,\calS^\star)}{\sqrt{\sigma\tau}\lambda_{\min}}  \right) $ , and
		\item Duality gap: 
		$\gap(\bar{x}^K, \bar{s}^K)  \le
		\frac{16}{K}\left(
			 \frac{\dist(0,\calX^\star)^2}{\tau}
			+ \frac{\dist(c,\calS^\star)^2}{\sigma\lambda_{\min}^2} 
			+ \frac{2\dist(0,\calX^\star)\dist(c,\calS^\star)}{\sqrt{\sigma\tau}\lambda_{\min}}
		\right) $ . 
	\end{enumerate} 
\end{theorem} 

Theorem \ref{thm rho sublinear PDHG to LP natural} states that the distances to the constraints and the duality gap of the average iterates of PDHG converge to zero at a sublinear rate.  We note the roles of the primal and dual step-size $\tau$ and $\sigma$:  generally speaking the larger $\tau$ is, the faster $\bar{s}^K$ converges to the dual feasible set, while the larger $\sigma$ is, the faster $\bar{x}^K$ converges to the primal feasible set. However, a balanced choice of $\tau$ and $\sigma$ is required (at least in theory) for faster convergence of the duality gap of $(\bar{x}^K,\bar{s}^K)$. \ignore{Note that while the product of the step-sizes $\tau \sigma$ is the relevant quantity in the conditions for convergence in \eqref{eq general step size requirement}, the ratio between $\tau$ and $\sigma$ can also play a significant role in balancing the convergence of the three different near-optimality conditions. } Items ({\em 1.}) and ({\em 2.}) of Theorem \ref{thm rho sublinear PDHG to LP natural} follow directly using Lemma \ref{lm: rho sublinear PDHG} and Lemma \ref{lm: convergence of PHDG without restart} plus a norm inequality relating distances in the $M$-norm to Euclidean distances (Proposition \ref{fact: norm upperbound}). 

Below we present two lemmas that are related to the performance of PDHG and are used in the proof of Theorem \ref{thm rho sublinear PDHG to LP natural}.  The first is a well-known general nonexpansive property for PDHG (and certain other methods as well), see \cite{ryu2022large,applegate2023faster,chambolle2011first}.

\begin{lemma}[Nonexpansive property of PDHG, see \cite{applegate2023faster}]\label{lm: nonexpansive property} Suppose that  $\tau,\sigma$ satisfy \eqref{eq  general step size requirement}. For any saddlepoint $z^*$ of \eqref{pro: saddle point LP}, and for all $k\ge0$, 
	\begin{equation}\label{eq nonexpansive property}
		\| z^{k+1} -z^\star \|_M \le  \|z^k  - z^\star \|_M \ .
	\end{equation}
	Therefore under the assignment $z:= z^k$ or $z := \bar z^k$ it holds that $
		\left\| z -z^\star\right\|_M \le \left\|z^0  - z^\star\right\|_M$.
\end{lemma}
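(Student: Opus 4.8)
The approach is to recast one step of PDHG as a preconditioned proximal‑point step for the monotone operator attached to the saddlepoint problem \eqref{pro: saddle point LP}, and then run the standard Fej\'er‑monotonicity argument in the $M$‑norm.

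First I would translate the two lines of Algorithm~\ref{alg: one PDHG} into a single inclusion. The projection defining $x^{+}$ is equivalent to the variational inequality $-\big[(c - A^\top y) + \tfrac1\tau(x^{+}-x)\big] \in N_{\mathbb{R}^n_+}(x^{+})$, where $N_{\mathbb{R}^n_+}$ denotes the normal cone, and the linear update defining $y^{+}$ rearranges (using the extrapolation $2x^{+}-x$) to $(Ax^{+}-b) + A(x^{+}-x) + \tfrac1\sigma(y^{+}-y) = 0$. Write $z=(x,y)$, $\mathcal Z := \mathbb{R}^n_+\times\mathbb{R}^m$, and let $F(z) := (c - A^\top y,\ Ax - b)$ be the operator whose zeros on $\mathcal Z$ are exactly the saddlepoints of \eqref{pro: saddle point LP}; note $F$ is affine with a skew‑symmetric linear part, so $\langle F(z)-F(z'),\,z-z'\rangle = 0$ for all $z,z'$. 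After expressing $c - A^\top y = (c - A^\top y^{+}) + A^\top(y^{+}-y)$, the two conditions combine into
$$ 0 \ \in\ F(z^{k+1}) + N_{\mathcal Z}(z^{k+1}) + M\,(z^{k+1}-z^{k}) , $$
with $M$ the matrix of \eqref{robsummer}, which satisfies $M\succeq 0$ by \eqref{eq  general step size requirement}, so $\|\cdot\|_M$ is a bona fide seminorm. I expect the only genuinely fiddly point of the whole argument to be this recasting — matching signs and indices so that precisely $M$ appears, with its off‑diagonal blocks absorbing the terms coming from the $2x^{+}-x$ extrapolation and from the $y^{+}\!\to\!y$ substitution; everything afterwards is the textbook argument.

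Next I would combine the inclusion with optimality of $z^\star$. From $-F(z^{k+1}) - M(z^{k+1}-z^{k}) \in N_{\mathcal Z}(z^{k+1})$ and $z^\star\in\mathcal Z$ we get $\langle F(z^{k+1}) + M(z^{k+1}-z^{k}),\, z^{k+1}-z^\star\rangle \le 0$; from $-F(z^\star)\in N_{\mathcal Z}(z^\star)$ (which holds since $z^\star$ is a saddlepoint) and $z^{k+1}\in\mathcal Z$ we get $\langle F(z^\star),\, z^{k+1}-z^\star\rangle \ge 0$. Subtracting and using $\langle F(z^{k+1})-F(z^\star),\, z^{k+1}-z^\star\rangle = 0$ leaves $\langle M(z^{k+1}-z^{k}),\, z^{k+1}-z^\star\rangle \le 0$. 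Applying the three‑point identity $2\langle M(a-b),\,a-c\rangle = \|a-b\|_M^2 + \|a-c\|_M^2 - \|b-c\|_M^2$ for the symmetric form $M$ with $a=z^{k+1}$, $b=z^{k}$, $c=z^\star$ then gives
$$ \|z^{k+1}-z^\star\|_M^2 + \|z^{k+1}-z^{k}\|_M^2 \ \le\ \|z^{k}-z^\star\|_M^2 , $$
and dropping the nonnegative term $\|z^{k+1}-z^{k}\|_M^2$ and taking square roots yields \eqref{eq nonexpansive property}.

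Finally, for the ``Therefore'' clause: iterating \eqref{eq nonexpansive property} gives $\|z^{k}-z^\star\|_M \le \|z^{0}-z^\star\|_M$ for every $k\ge 0$, which handles the assignment $z:=z^{k}$; and since $\bar z^{k} = \tfrac1k\sum_{i=1}^{k} z^{i}$ is a convex combination of the $z^{i}$ while $\|\cdot\|_M$ is convex, $\|\bar z^{k}-z^\star\|_M \le \tfrac1k\sum_{i=1}^{k}\|z^{i}-z^\star\|_M \le \|z^{0}-z^\star\|_M$, which handles $z:=\bar z^{k}$. Throughout, one only uses $M\succeq 0$ (so that $\|\cdot\|_M$ is a seminorm and the inner‑product identities above are valid); positive definiteness is never needed, and no claim about $z^{k+1}$ itself being uniquely determined is made.
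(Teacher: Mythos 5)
Your plan — recast one PDHG step as a preconditioned proximal–point step $0\in T(z^{k+1})+M(z^{k+1}-z^k)$ for the monotone operator $T(z)=F(z)+N_{\mathcal Z}(z)$, then run the Fej\'er/three‑point argument in the $M$‑seminorm and finish the averaged‑iterate case by convexity of the seminorm — is exactly the standard argument behind Proposition~2 of Applegate et al.\ \cite{applegate2023faster}, which is all the paper itself invokes (the paper gives no independent proof). The three‑point identity step, the use of monotonicity of $T$ and skew‑symmetry of the linear part of $F$, dropping the nonnegative term $\|z^{k+1}-z^k\|_M^2$, iterating, and the convexity argument for $\bar z^k$ are all correct, and you correctly noted that only $M\succeq 0$ (not $M\succ 0$) is needed.

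One point in the ``fiddly'' step you deferred is worth being explicit about, because it does not come out quite as you asserted. Carrying out the recasting from your own two displayed optimality conditions
$$ -\Big[(c-A^\top y^{+})+A^\top(y^{+}-y)+\tfrac{1}{\tau}(x^{+}-x)\Big]\in N_{\mathbb{R}^n_+}(x^{+}),\qquad (Ax^{+}-b)+A(x^{+}-x)+\tfrac{1}{\sigma}(y^{+}-y)=0, $$
the correction term that stacks up is
$$ \begin{pmatrix}\tfrac{1}{\tau}I & A^\top\\ A & \tfrac{1}{\sigma}I\end{pmatrix}(z^{k+1}-z^{k}), $$
i.e.\ the off‑diagonal blocks carry a \emph{plus} sign, whereas \eqref{robsummer} defines $M$ with $-A^\top$ and $-A$ off the diagonal. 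The two matrices are conjugate under the reflection $(x,y)\mapsto(x,-y)$, have the same Schur complement, and hence the same positive‑semidefiniteness threshold \eqref{eq general step size requirement}; your Fej\'er argument is valid verbatim for whichever symmetric PSD matrix actually appears in the inclusion. So the argument is sound — but if you literally insist that the $M$ of \eqref{robsummer} is the preconditioner produced by Algorithm~\ref{alg: one PDHG} as written, the sign check fails, and you should either flag the sign convention or prove the statement for the $+A^\top$ version of $M$ and note the conjugacy.
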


\begin{lemma}\label{lm: R in the opt gap convnergence} Suppose $z^a$, $z^b$, and $z^c$ satisfy the nonexpansive properties:  $\|z^b - z^\star\|_M \le \| z^{a} - z^\star\|_M$ and $\|z^c - z^\star\|_M \le \| z^{a} - z^\star\|_M$ for every $z^\star \in \calZ^\star$. Then 
	\begin{equation}\label{eq of lm: R in the opt gap convnergence}
		\begin{aligned}
			\max\{ \|z^b - z^c\|_M, \|z^b\|_M\}  \le   & \ 2 \dist_M(z^a,\calZ^\star)  + \|z^{a} \|_M	\ .
		\end{aligned}
	\end{equation}
\end{lemma}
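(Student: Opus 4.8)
The statement is a pure triangle-inequality exercise in the seminorm $\|\cdot\|_M$, so the plan is short. First I would fix a point $z^\star \in \calZ^\star$ attaining the minimum in the definition of $\dist_M(z^a,\calZ^\star)$, so that $\|z^a - z^\star\|_M = \dist_M(z^a,\calZ^\star)$; such a minimizer exists because $\calZ^\star = \calX^\star \times \calY^\star$ is a nonempty closed polyhedron (nonemptiness from Assumption~\ref{assump: general LP}) and, since $M \succeq 0$ by \eqref{robsummer}, $\|\cdot\|_M$ is a (possibly degenerate) seminorm for which $\min_{z\in\calZ^\star}\|z^a-z\|_M$ is attained. With this $z^\star$ in hand, the two hypothesized nonexpansive inequalities read $\|z^b - z^\star\|_M \le \dist_M(z^a,\calZ^\star)$ and $\|z^c - z^\star\|_M \le \dist_M(z^a,\calZ^\star)$.

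Next I would bound the two quantities inside the max separately. For $\|z^b - z^c\|_M$, insert $z^\star$ and use the triangle inequality for $\|\cdot\|_M$ (valid for any PSD form by Cauchy--Schwarz): $\|z^b - z^c\|_M \le \|z^b - z^\star\|_M + \|z^\star - z^c\|_M \le 2\,\dist_M(z^a,\calZ^\star)$, which is at most $2\,\dist_M(z^a,\calZ^\star) + \|z^a\|_M$. For $\|z^b\|_M$, again insert $z^\star$: $\|z^b\|_M \le \|z^b - z^\star\|_M + \|z^\star\|_M \le \dist_M(z^a,\calZ^\star) + \|z^\star\|_M$, and then bound $\|z^\star\|_M \le \|z^\star - z^a\|_M + \|z^a\|_M = \dist_M(z^a,\calZ^\star) + \|z^a\|_M$, giving $\|z^b\|_M \le 2\,\dist_M(z^a,\calZ^\star) + \|z^a\|_M$. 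Taking the maximum of the two bounds yields \eqref{eq of lm: R in the opt gap convnergence}.

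There is essentially no real obstacle here; the only points worth a sentence are (i) that $\|\cdot\|_M$ need only be a seminorm, so one should note the triangle inequality still holds, and (ii) that the minimizer $z^\star$ exists (or, if one prefers to avoid that remark, one can instead take $z^\star$ with $\|z^a - z^\star\|_M \le \dist_M(z^a,\calZ^\star) + \eps$ and let $\eps \downarrow 0$ at the end). Everything else is the two three-term triangle-inequality chains above.
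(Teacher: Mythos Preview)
Your proposal is correct and follows essentially the same approach as the paper: the paper also inserts a single $z^\star \in \calZ^\star$, applies the triangle inequality to bound $\|z^b-z^c\|_M$ and $\|z^b\|_M$ in terms of $\|z^a-z^\star\|_M$ and $\|z^a\|_M$, and then specializes to $z^\star = \arg\min_{z\in\calZ^\star}\|z-z^a\|_M$. The only cosmetic difference is that the paper does the chain for both terms simultaneously inside a $\max$ and picks $z^\star$ at the end, whereas you fix $z^\star$ first and treat the two terms separately.
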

\noindent
\noindent Lemma \ref{lm: nonexpansive property} is a simple extension of Proposition 2 \cite{applegate2023faster} to the setting of different primal and dual step sizes. It can be proved just as in \cite{applegate2023faster} by directly substituting the identical primal and dual step size $\eta$ of \cite{applegate2023faster} with $\tau$ and $\sigma$, respectively. Inequality \eqref{eq nonexpansive property} is known as the nonexpansive property and appears in many operator splitting and other related methods \cite{liang2016convergence,ryu2022large,applegate2023faster}.

\ignore{In \cite{applegate2023faster} it has been proved that directly using Lemma \ref{lm: nonexpansive property} and the classic saddlepoint problem convergence result of PDHG (such as Remark 2 in \cite{chambolle2016ergodic}) proves Lemma \ref{lm: rho sublinear PDHG}.} %For completeness, we provide the proof of Lemma~\ref{lm: rho sublinear PDHG} in Appendix~\ref{sec:proof preliminaries}.
\ignore{Lemma \ref{lm: R in the opt gap convnergence} is frequently used to ensure the $\max\{r,\|\bar{z}\|_M\}$ in the third term of Lemma \ref{lm: convergence of PHDG without restart} does not explode so that the duality gap can be bounded by the normalized duality gap $\rho(r;\bar{z})$. It is used in proving the third item of Theorem \ref{thm rho sublinear PDHG to LP natural} as well as in the and later the main results in Section \ref{restarts}.}

We end this section with a proposition on the relation between the $M$-norm used in PDHG and the Euclidean norm. 
While the $M$-norm arises naturally in the analysis of PDHG, it has the disadvantage that its quadratic form is not separable in $x$ and $y$ and also it implicitly includes the step-sizes $\tau$ and $\sigma$ in its definition. Towards the goal of stating results in terms of the Euclidean norms on $x$ and $y$, we introduce the following $N$-norm on $(x,y) \in \mathbb{R}^{m+n}$, whose quadratic form is separable in $x$ and $y$.  Define:
$$
\left\|(x,y)\right\|_N := \sqrt{\frac{1}{\tau} \|x\|^2 + \frac{1}{\sigma} \|y\|^2} \ \text{ where } \
N:= 	\begin{pmatrix}
	\frac{1}{\tau}I_n & \\
	& \frac{1}{\sigma}I_m
\end{pmatrix} \ . 
$$
In comparison with the $M$-norm, the $N$-norm offers advantages in both computation and analysis. Furthermore, when $\tau$ and $\sigma$ are sufficiently small, the $M$-norm and $N$-norm are equivalent up to well-specified constants related to $\tau$ and $\sigma$, as follows.

\begin{proposition}\label{fact: norm upperbound} Suppose that  $\sigma, \tau$ satisfy \eqref{eq  general step size requirement}. Then for any $z=(x,y) \in \mathbb{R}^{m+n}$, it holds that 

\begin{equation}\label{silly}\sqrt{1 - \sqrt{\tau\sigma}\lambda_{\max}}  \cdot \|z\|_{N}  \le 	\|z\|_M \le 	 \sqrt{2}  \|z\|_{N} \ . \end{equation}
Furthermore, if $x \in \mathbb{R}^n_+$ and $s:= c - A^\top y \in\mathbb{R}^n$ then 
\begin{align}
&		\dist_M(z,\calZ^\star) \le  \frac{\sqrt{2}}{\sqrt{\tau}} \dist(x,\calX^\star) + \frac{\sqrt{2}}{\sqrt{\sigma}\lambda_{\min}} \dist(s,\calS^\star)  \, , \ \text{ and } \label{eq of lm: M norm to seperable norm} \\
&	\dist_M(z,\calZ^\star) \ge  \sqrt{1 - \sqrt{\tau\sigma}\lambda_{\max}} \cdot \max\left\{
\frac{1}{\sqrt{\tau}}\dist(x,\calX^\star), \frac{1}{\sqrt{\sigma}\lambda_{\max}} \dist(s,\calS^\star)
\right\} \label{eq of lm: M norm to seperable norm 2} \ .
\end{align} 
\end{proposition}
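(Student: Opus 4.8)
The plan is to reduce everything to the single algebraic identity
$$\|z\|_M^2 \;=\; \|z\|_N^2 \;-\; 2x^\top A^\top y \qquad\text{for } z=(x,y),$$
which is immediate from the block structures of $M$ and $N$. I will control the cross term by $2|x^\top A^\top y| = 2|(Ax)^\top y| \le 2\|Ax\|\,\|y\| \le 2\lambda_{\max}\|x\|\,\|y\|$, and then by AM--GM $2\|x\|\,\|y\| = 2\sqrt{\tfrac1\tau\|x\|^2}\cdot\sqrt{\tfrac1\sigma\|y\|^2}\cdot\sqrt{\tau\sigma} \le \sqrt{\tau\sigma}\,\|z\|_N^2$, so that $2|x^\top A^\top y| \le \sqrt{\tau\sigma}\,\lambda_{\max}\,\|z\|_N^2$. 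Since $\sqrt{\tau\sigma}\,\lambda_{\max}\le 1$ by the step-size condition \eqref{eq general step size requirement}, substituting this into the identity gives $(1-\sqrt{\tau\sigma}\lambda_{\max})\|z\|_N^2 \le \|z\|_M^2 \le (1+\sqrt{\tau\sigma}\lambda_{\max})\|z\|_N^2 \le 2\|z\|_N^2$, which is precisely \eqref{silly} after taking square roots.

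For the upper bound \eqref{eq of lm: M norm to seperable norm} I will exhibit a convenient point of $\calZ^\star = \calX^\star\times\calY^\star$. Take $x^\star := P_{\calX^\star}(x)$ and $s^\star := P_{\calS^\star}(s)$, and let $y^\star$ be the point of the (nonempty) affine fiber $\{y' : A^\top y' = c - s^\star\}$ closest to $y$; every point of this fiber lies in $\calY^\star$, so $z^\star := (x^\star,y^\star)\in\calZ^\star$. This particular choice forces $y - y^\star \in \operatorname{Im}(A)$, whence $\|s - s^\star\| = \|A^\top(y-y^\star)\| \ge \lambda_{\min}\|y-y^\star\|$ by the characterization of $\lambda_{\min}$ as the smallest nonzero singular value of $A$. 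Then the right-hand inequality of \eqref{silly} together with $\sqrt{a^2+b^2}\le a+b$ gives
$$\dist_M(z,\calZ^\star) \;\le\; \|z-z^\star\|_M \;\le\; \sqrt2\,\|z-z^\star\|_N \;\le\; \frac{\sqrt2}{\sqrt\tau}\|x-x^\star\| + \frac{\sqrt2}{\sqrt\sigma}\|y-y^\star\| \;\le\; \frac{\sqrt2}{\sqrt\tau}\dist(x,\calX^\star) + \frac{\sqrt2}{\sqrt\sigma\,\lambda_{\min}}\dist(s,\calS^\star).$$

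For the lower bound \eqref{eq of lm: M norm to seperable norm 2} I will argue that \emph{every} $z^\star = (x^\star,y^\star)\in\calZ^\star$ is $M$-far from $z$. The left-hand inequality of \eqref{silly} gives $\|z - z^\star\|_M \ge \sqrt{1-\sqrt{\tau\sigma}\lambda_{\max}}\,\|z-z^\star\|_N$, and then $\|z-z^\star\|_N \ge \tfrac1{\sqrt\tau}\|x-x^\star\| \ge \tfrac1{\sqrt\tau}\dist(x,\calX^\star)$ since $x^\star\in\calX^\star$. For the dual side, set $s^\star := c - A^\top y^\star$, which lies in $\calS^\star$ because $y^\star\in\calY^\star$; then $s - s^\star = A^\top(y^\star - y)$, so $\dist(s,\calS^\star) \le \|s-s^\star\| = \|A^\top(y^\star-y)\| \le \lambda_{\max}\|y^\star-y\|$, i.e.\ $\|z-z^\star\|_N \ge \tfrac1{\sqrt\sigma}\|y-y^\star\| \ge \tfrac{1}{\sqrt\sigma\,\lambda_{\max}}\dist(s,\calS^\star)$. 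Taking the minimum over $z^\star\in\calZ^\star$ and then the larger of the two resulting bounds yields \eqref{eq of lm: M norm to seperable norm 2}.

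The only genuinely delicate step—and the main obstacle, such as it is—is the bookkeeping in \eqref{eq of lm: M norm to seperable norm}: one must pick $y^\star$ \emph{inside} the fiber over $s^\star$ so that $y-y^\star\in\operatorname{Im}(A)$, since the inequality $\|A^\top v\|\ge\lambda_{\min}\|v\|$ is only valid for $v$ in the row space of $A$; an arbitrary dual optimum matching the slack $s^\star$ would break the constant. Everything else reduces to routine manipulation of the $M$- and $N$-quadratic forms together with the singular-value bounds $\lambda_{\min}\|v\|\le\|A^\top v\|\le\lambda_{\max}\|v\|$ on $\operatorname{Im}(A)$ and $\|Ax\|\le\lambda_{\max}\|x\|$.
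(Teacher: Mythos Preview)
Your proof is correct and follows essentially the same route as the paper's. The only cosmetic differences are: (i) for \eqref{silly} you bound the cross term $2x^\top A^\top y$ directly via Cauchy--Schwarz and AM--GM, whereas the paper packages the same inequality as a Schur-complement/positive-semidefiniteness argument on $M-\alpha^2 N$; and (ii) for \eqref{eq of lm: M norm to seperable norm} you construct the fiber-projected $y^\star$ inline, which is precisely the content of the paper's auxiliary Proposition \ref{jeep} (there stated as $\dist(y,\calY^\star)\le\dist(s,\calS^\star)/\lambda_{\min}$). Your treatment of \eqref{eq of lm: M norm to seperable norm 2} is identical to the paper's.
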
  

\noindent Appendix \ref{guitar} presents proofs of the (uncited) results in this section.

\section{Computational Guarantees for PDHG with Restarts}\label{restarts}

In this section we formally state and prove the computational guarantees that we previewed and discussed in Section \ref{subsec: main results}.  In Section \ref{subsec: two results} we present our new computational guarantees in two theorems: Theorem \ref{thm overall complexity} is a more formal statement of Main Result \ref{main result} previewed in Section \ref{subsec: main results}, and Theorem \ref{thm special step size complexity} is a formalization of the improved bound previewed in \eqref{eq smart N intro} obtained for a very special choice of stepsizes $\tau$ and $\sigma$ that are primarily of theoretical interest since they are not efficiently computable. Section \ref{subsec: proof of overall complexity} is dedicated to the proofs of our key results, and Section \ref{subsec: proof of Mdistance upper bounded by normalized duality gap} is comprised of statements and proofs of supporting lemmas.

Building on the basic step of PDHG described in \eqref{eq: one PDHG}, Algorithm \ref{alg: PDHG with restarts} formally presents the framework of PDHG with restarts that we will work with. 
\begin{algorithm}[htbp]
	\SetAlgoLined
	{\bf Input:} Initial iterate $z^{0,0}:=(x^{0,0}, y^{0,0})$, $n \gets 0$ \;
	\Repeat{\text{Either $z^{n,0}$ is a saddlepoint or $z^{n,0}$ satisfies some other convergence condition }}{
		\textbf{initialize the inner loop:} inner loop counter $k\gets 0$ \;
		\Repeat{Some (verifiable) restart condition is satisfied by $\bar{z}^{n,k}$ \ }{
			\textbf{conduct one step of PDHG: }$z^{n,k+1} \gets \textsc{PDHGstep}(z^{n,k})$ \;
			\textbf{compute the average iterate in the inner loop. }$\bar{z}^{n,k+1}\gets\frac{1}{k+1} \sum_{i=1}^{k+1} z^{n,i}$
			\label{line:average} \;  \label{line:output-is-average-of-iterates} 
			$k\gets k+1$ \;
		}
		\textbf{restart the outer loop:} $z^{n+1,0}\gets \bar{z}^{n,k}$, $n\gets n+1$ \;
	}
	{\bf Output:} $z^{n,0}$ ($ \ = (x^{n,0},  y^{n,0})$)
	\caption{PDHG with general restart scheme (rPDHG)}\label{alg: PDHG with restarts}
\end{algorithm}
Recall that we use the notation $z^{k+1} \gets \textsc{PDHGstep}(z^k)$ to denote an iteration of PDHG as described in  \eqref{eq: one PDHG}. The double superscript on the variable $z^{n,k}$ indexes the outer iteration counter followed by the inner iteration counter, so that $z^{n,k}$ is the $k$-th inner iteration of the $n$-th outer loop.  In order to implement Algorithm \ref{alg: PDHG with restarts} it is necessary to specify a (verifiable) restart condition on the average iterate $\bar z^{n,k}$ in line $\bf 8$ that is used to determine when to restart PDHG. We will primarily consider Algorithm \ref{alg: PDHG with restarts} using the following restart condition in line $\bf 8$:
\begin{equation}\label{catsdogs}\rho(\|\bar{z}^{n,k} - z^{n,0}\|_M; \bar{z}^{n,k}) \le \beta \cdot \rho(\|z^{n,0} - z^{n-1,0}\|_M; z^{n,0}) \ , \end{equation} for a specific value of $\beta \in (0,1)$ (in fact we will use $\beta = 1/e$ where $e$ is the base of the natural logarithm).  In this way \eqref{catsdogs} is nearly identical to the condition used in \cite{applegate2023faster}. The condition \eqref{catsdogs} essentially states that the normalized duality gap shrinks by the factor $\beta$ between restart values $\bar{z}^{n,k}$ and ${z}^{n,0}$. One of the reasons for using condition \eqref{catsdogs} is that the normalized duality gap can be easily approximated, see \cite[Section 6]{applegate2023faster}.

\subsection{Computational guarantees for rPDHG based on \limitinger~and \LPsharp}\label{subsec: two results}

In this section we present our main computational guarantee for rPDHG. This computational guarantee relies on the two condition measures $\thetax$ and $\mu_p$ for the primal and on their counterparts $\thetas$ and $\mu_d$ for the dual problem. Recall that we formally defined $\thetax$ and $\mu_p$ for the primal problem in Definitions \ref{def:limitinger} and \ref{def: sharpness mu}.  Now that we have established the symmetric representation of the primal and the dual problems in \eqref{pro: primal dual reformulated LP}, we can write the formal descriptions of $\thetas$ and $\mu_d$ for the dual problem.  Similar to \eqref{overcast}, for $s\in V_d\setminus \calF_d$ we define $\theta_d(s) :=\frac{\dist(s, \calF_d)}{\dist(s,\mathbb{R}^n_+)}$, and then define 
\begin{equation}\label{eq:cond_for_dual}
	\thetas :=   \lim_{\eps \to 0} \left( \sup_{s\in V_d, \hspace{.05cm}\dist(s, \calS^\star) \le \eps} \theta_d(s) \right)
		\ , \quad\text{ and }\quad \mu_d  = \inf_{s \in \calF_d\setminus\calS^\star} \frac{\dist(s,V_d\cap H_d^\star   )}{\dist(s,\ \calS^\star)} 
	\ 
\end{equation}
where $H_d^\star$ denotes the optimal objective hyperplane for the dual problem, namely $H_d^\star:=\{\hat{s}\in\mathbb{R}^n_+
:q^\top(c-s) = q^\top (c - s^\star)\}$ for any $s^\star \in \calS^\star$.

We consider running rPDHG (Algorithm \ref{alg: PDHG with restarts}) using the following simple restart condition:
\begin{definition}[$\beta$-restart condition]\label{def beta restart}
	For a given $\beta \in(0,1)$, the iteration $(n,k)$ satisfies the \textit{$\beta$-restart condition} if $n \ge 1$ and condition \eqref{catsdogs} is satisfied, or $n=0$ and $k=1$.
\end{definition}
\noindent In the following theorem we suppose for simplicity and ease of exposition that $c \in \linVp$.    
Note that under Assumption \ref{assump: general LP} it must hold that $\|c\| \ne 0$, since otherwise all feasible solutions for \eqref{pro: general primal LP} would have the same objective value, contradicting the existence of non-optimal feasible solutions.  Similarly for the dual it must hold that $\|q\| \ne 0$.
Also recall the definitions of $\lambda_{\max}$, $\lambda_{\min}$, and $\kappa$ from \eqref{eq  def lamdab min max}. 
\begin{theorem}\label{thm overall complexity}
	Suppose $c \in \linVp$, and that Assumption \ref{assump: general LP} holds, and that Algorithm \ref{alg: PDHG with restarts} (rPDHG) is run starting from $z^{0,0} = (x^{0,0},y^{0,0} )= (0,0)$ using the $\beta$-restart condition with $\beta := 1/e$. Furthermore, let the step-sizes be chosen as follows:
	\begin{equation}\label{eq smart step size 1}
		\tau = \frac{\|q\|}{2\kappa\|c\|   } \ \ \text{ and } \ \ \sigma = \frac{\|c\|}{2\|q\|\lambda_{\max}\lambda_{\min}} \ .
	\end{equation}
Let $T$ be the total number of \textsc{PDHGstep} iterations that are run in order to obtain $n$ for which $(x^{n,0},s^{n,0})$ satisfies $\calE_d(x^{n,0},s^{n,0}) \le \eps$.  Then 

\begin{equation}\label{eq overall complexity}
		T \ \le \ 5 e \cdot  \condN \cdot \ln\Bigg(   
		 \condN \calD \cdot \frac{\calE_d(x^{0,0},s^{0,0} )}{ \eps}
		\Bigg) + 1\ ,
	\end{equation}
	where $\calD$ is defined in \eqref{eq:def_D} and $\condN$ is defined as follows:
	\begin{equation}\label{eq easy L}
		\condN := 8.5 \kappa \left(
		\frac{1}{\mu_p} + \frac{1}{\mu_d} \right)
		\Bigg(  
		\thetax +\thetas   +  \frac{ \dist(0,\calX^\star) }{\dist(0, V_p)} +  \frac{\dist(c,\calS^\star)}{ \dist(0,V_d)} 
		\Bigg) \ .
	\end{equation}
\end{theorem}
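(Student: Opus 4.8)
The plan is to combine three ingredients: (i) a per-restart linear contraction of the normalized duality gap, driven by the restart condition \eqref{catsdogs}; (ii) a translation of the normalized duality gap into the error measure $\calE_d$ via Lemma \ref{lm: convergence of PHDG without restart} together with the \limitinger{} and \LPsharp{} condition measures; and (iii) a bound on the number of inner PDHG iterations needed to trigger each restart, using the sublinear rate of Corollary \ref{col: rho sublinear PDHG}.

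First I would establish the inner-loop iteration count. Within outer iteration $n$, Corollary \ref{col: rho sublinear PDHG} gives $\rho(\|\bar z^{n,k}-z^{n,0}\|_M;\bar z^{n,k}) \le 8\,\dist_M(z^{n,0},\calZ^\star)/k$, while the restart target on the right-hand side of \eqref{catsdogs} is a fixed positive number once $z^{n,0}$ and $z^{n-1,0}$ are fixed; lower-bounding that target in terms of $\dist_M(z^{n,0},\calZ^\star)$ (this is where the condition measures enter — the normalized duality gap at $z^{n,0}$ is comparable to $\dist_M(z^{n,0},\calZ^\star)$ up to the $\condL$-type factor) yields that the restart condition \eqref{catsdogs} is met after $O(\condL)$ inner iterations, uniformly in $n$. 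So $T \le (\text{number of outer restarts}) \times O(\condL) + 1$.

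Next I would establish the outer-loop linear convergence. The key claim is that $\dist_M(z^{n+1,0},\calZ^\star) \le \gamma\, \dist_M(z^{n,0},\calZ^\star)$ for a constant $\gamma<1$ depending only on $\beta$. This is the heart of the proof. One direction is easy: the restart condition \eqref{catsdogs} forces the normalized duality gap to drop by $\beta$. The harder direction is a two-sided comparison between $\dist_M(z,\calZ^\star)$ and the normalized duality gap $\rho(\|z - z^{\text{prev},0}\|_M;z)$ evaluated at a restart point. The lower bound $\rho \gtrsim \dist_M(\cdot,\calZ^\star)/\condL$ comes from Lemma \ref{lm: convergence of PHDG without restart}: the three quantities it controls (primal infeasibility distance, dual infeasibility distance, duality gap) can be parlayed into $\dist(x,\calX^\star)$ and $\dist(s,\calS^\star)$ precisely because the \limitinger{} $\theta^\star$ converts an infeasibility-to-the-orthant bound into a feasibility-distance bound near the optimal set, and the \LPsharp{} $\mu$ converts a small duality gap into closeness to the optimal face; the ratio terms $\dist(0,\calX^\star)/\dist(0,V_p)$ and $\dist(c,\calS^\star)/\dist(0,V_d)$ absorb the passage between the $M$-norm scaling and Euclidean scaling with the chosen step-sizes \eqref{eq smart step size 1} (using Proposition \ref{fact: norm upperbound} and the fact that the specific $\tau,\sigma$ make $\sqrt{\tau\sigma}\lambda_{\max} = 1/2$, so $\sqrt{1-\sqrt{\tau\sigma}\lambda_{\max}}$ is an absolute constant). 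I expect the main obstacle to be carefully tracking these conversions so that the constant $8.5$ and the composite form of $\condL$ in \eqref{eq easy L} come out exactly, and handling the base case $n=0$ (started from $(0,0)$) where the restart is defined directly via $k=1$ rather than \eqref{catsdogs}.

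Finally I would assemble the bound. Linear convergence $\dist_M(z^{n,0},\calZ^\star) \le \gamma^n\,\dist_M(z^{0,0},\calZ^\star)$ means that after $n = O(\log(\dist_M(z^{0,0},\calZ^\star)/(\eps\text{-scaled target})))$ outer restarts we have $\calE_d(x^{n,0},s^{n,0}) \le \eps$; the logarithm's argument picks up the factor $\condL$ (from converting $\calE_d$ to $\dist_M$ and back) and the factors $(1+\kappa\|c\|/\|q\|)$ and $(1+\|q\|/\|c\|)$ (from the $M$-norm/$N$-norm equivalence of Proposition \ref{fact: norm upperbound} under the step-sizes \eqref{eq smart step size 1}). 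Multiplying the $O(\condL)$ inner iterations per restart by the $O(\log(\cdots))$ outer restarts, tracking constants so the leading coefficient is $9e$ and the $\ln$ argument is exactly as displayed, and adding the $+1$ for the initial step, gives \eqref{eq overall complexity}.
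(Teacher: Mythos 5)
Your high-level architecture matches the paper's: find a $\condL$ with $\dist_M(z^{n,0},\calZ^\star)\le\condL\cdot\rho(\|z^{n,0}-z^{n-1,0}\|_M;z^{n,0})$ using the condition measures, use Corollary~\ref{col: rho sublinear PDHG} to bound each inner loop by $O(\condL)$, track geometric decrease across restarts, and note that the step sizes~\eqref{eq smart step size 1} make $\sqrt{\tau\sigma}\lambda_{\max}=\tfrac12$ so the $M$/$N$-norm equivalence has absolute constants. However, your central intermediate claim --- that $\dist_M(z^{n+1,0},\calZ^\star)\le\gamma\,\dist_M(z^{n,0},\calZ^\star)$ with $\gamma<1$ depending \emph{only} on $\beta$ --- is not provable and is not how the paper proceeds. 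The restart condition~\eqref{catsdogs} gives geometric decrease of the \emph{normalized duality gap} $\rho(\|z^{n,0}-z^{n-1,0}\|_M;z^{n,0})$ in $n$, and converting $\rho$ to $\dist_M$ costs a factor $\condL$; if you paid that factor at every outer iteration you would get a useless $\condL^{n}$ blowup. Lemma~\ref{lm: complexity of PDHG with adaptive restart} avoids this by telescoping the $\rho$-decrease across all restarts (inequality~\eqref{eq thm number of restarts 3}) and converting to $\calE_d$ exactly once at the end via~\eqref{eq restart L condition} and~\eqref{eq of lm: M norm to seperable norm 2}; this is why $\condL$ appears multiplicatively outside the logarithm and only once inside, not compounded.

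A second genuine gap is that you treat the ``parlaying'' of Lemma~\ref{lm: convergence of PHDG without restart}'s three bounds into $\dist_M(z,\calZ^\star)\le\condL\cdot\rho(\cdot)$ as nearly automatic, but it is the technical center of the proof (the paper's Lemma~\ref{lm: converegnce under M distance}, fed into Lemma~\ref{lm: Mdistance upper bounded by normalized duality gap}). Three pieces are missing from your sketch. First, $\bar x$ is not in $V_p$, so one must project it onto $V_p$ before the error ratio $\theta_p$ applies, and then relate the resulting distances back to $\bar x$. Second, the relative distance-to-optima terms $\dist(0,\calX^\star)/\dist(0,V_p)$ and $\dist(c,\calS^\star)/\dist(0,V_d)$ in $\condL$ do not arise from the $M$/$N$-norm passage per se; they arise from the $\max\{r,\|\bar z\|_M\}$ prefactor in the duality-gap bound (item~3 of Lemma~\ref{lm: convergence of PHDG without restart}), controlled via Lemma~\ref{lm: R in the opt gap convnergence} and then converted to Euclidean form by Proposition~\ref{fact: norm upperbound}, with the step-size substitution producing the denominators. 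Third, and most importantly, the argument produces bounds involving the \emph{pointwise} error ratios $\theta_p(P_{V_p}(x^{n,0}))$ and $\theta_d(s^{n,0})$, which depend on the iterate, not on the LP instance; one needs the ray/limiting argument of Lemma~\ref{lm: straight line mono} to replace them by $\thetax$ and $\thetas$, and without that step $\condL$ as defined in~\eqref{eq easy L} is not available. Any complete proof along your lines would have to supply these three ingredients.
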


In Section \ref{subsec: comments} we discussed key points and comments about Theorem \ref{thm overall complexity}.  Here we present some detailed technical remarks. The optimality tolerance criterion used in the theorem is $\calE_d(x^{n,0},s^{n,0})$ (see \eqref{xmas}), which is the distance to optima of the primal and dual (slack) variable $(x^{n,0}, s^{n,0})$. Note that $\dist(s^{n,0},\calS^\star)$ is equal to $\dist_{AA^\top}(y^{n,0},\calY^\star)$ -- the distance of $y^{n,0}$ to $\calY^\star$ under the $AA^\top$-norm. The prescribed step-sizes in \eqref{eq smart step size 1} are relatively easy to compute as long as estimates of the largest and smallest positive singular values of $A$ are easy to compute. In particular neither the \LPsharp~nor the \limitinger~are needed in the computation of the step-sizes. See Remark \ref{rmk:talk about other parameters} for extensions of the theorem to other step-sizes, different values of $\beta$, and relaxations of the assumption that $c \in \linVp$. The proof of Theorem \ref{thm overall complexity} is presented in Section \ref{subsec: proof of overall complexity}, and the proofs of the supporting technical lemmas are in Section \ref{subsec: proof of Mdistance upper bounded by normalized duality gap} and Appendix \ref{appsec: proof of restarts}.

Let us now show that the expressions \eqref{eq overall complexity} and \eqref{eq easy L}  are equivalent to \eqref{eq overall complexity intro} and \eqref{eq:def_N} up to absolute constant factors, thus validating that Main Result \ref{main result} is the same as Theorem \ref{thm overall complexity}. Under the condition that $c \in \linVp$ in the theorem it follows that $\dist(c,\calS^\star) \le \|c\| + \dist(0,\calS^\star) \le 2\dist(0,\calS^\star)$, and therefore $ \frac{\dist(c,\calS^\star)}{ \dist(0,V_d)} \le  2\frac{\dist(0,\calS^\star)}{ \dist(0,V_d)}  = 2\frac{\dist(0,\calS^\star)}{ \|c\|}$.  
Also it follows from Fact \ref{fact: symmetric LP formulation} that $\dist(0, V_p) = \|q\| = \|b\|_Q$ where $\|b\|_Q$ denotes $\|A^\top (AA^\top)^\dagger b\|$.

The step-sizes \eqref{eq smart step size 1} in Theorem \ref{thm overall complexity} are valid step-sizes that are typically relatively easy to compute.  It has been observed in practice that heuristically tuning the ratio $\tau/\sigma$ can significantly improve the performance of PDHG, see \cite{applegate2021practical,applegate2023faster}. Theorem \ref{thm special step size complexity} below shows that a specially chosen step-size ratio (that is unfortunately not easy to compute in practice) results in an iteration bound that is potentially much smaller than that of Theorem \ref{thm overall complexity}.

\begin{theorem}\label{thm special step size complexity} Suppose $c \in \linVp$, and that Assumption \ref{assump: general LP} holds, and that Algorithm \ref{alg: PDHG with restarts} (rPDHG) is run starting from $z^{0,0} = (x^{0,0},y^{0,0} )= (0,0)$ using the $\beta$-restart condition with $\beta := 1/e$. Let the step-sizes be chosen as follows:
	\begin{equation}\label{eq smart step size 2}
		\tau = \frac{\mu_d\|q\|}{2\kappa\mu_p \|c\|   } \ \  \text{ and } \ \ \sigma = \frac{\mu_p\| c\|}{2\mu_d\|q\|\lambda_{\max}\lambda_{\min}} \ , 
		 \ 
	\end{equation}
and let $T$ be the total number of \textnormal{\textsc{PDHGstep}} iterations that are run in order to obtain $n$ for which $(x^{n,0},s^{n,0})$ satisfies $\calE_d(x^{n,0},s^{n,0}) \le \eps$.  Then 

	\begin{equation}\label{eq overall complexity2}
		T \ \le \ 5 e \cdot  \widehat{\condN} \cdot \ln\Bigg(   
		 \widehat{\condN} \widehat{\calD} \cdot \frac{\calE_d(x^{0,0},s^{0,0} )}{ \eps}
		\Bigg) + 1\ ,
	\end{equation}
where $ \widehat{\calD} $ is defined in \eqref{eq smart D intro} and $ \widehat{\condN} $ is defined as follows:
	\begin{equation}\label{eq smart L}
		 \widehat{\condN}  := 16 \kappa  \left(
		\frac{\thetax }{\mu_p}  +\frac{\thetas}{\mu_d } +
		\frac{\dist(0,\calX^\star) }{\mu_d\cdot \dist(0, V_p)} +
		\frac{ \dist(c,\calS^\star)}{\mu_p \cdot\dist(0,V_d)}    
		\right)  \ .
	\end{equation} 
\end{theorem}
Note that $\widehat{\condN}$ in \eqref{eq smart L} is potentially much smaller than \eqref{eq easy L} because there are fewer cross-terms involving $\mu_p$, $\mu_d$, $\thetax$, and $\thetas$. However, the assignment of the step-sizes \eqref{eq smart step size 2} requires knowledge of the \LPsharp~constants $\mu_p$ and $\mu_d$ (or just their ratio), which are likely to be neither known nor easily computable. Although the step-sizes in \eqref{eq smart step size 2} are not implementable in practice, Theorem \ref{thm special step size complexity} lends some theoretical justification to the observed practical value of adaptively tuning the step-sizes in practical solvers \cite{applegate2023faster,applegate2021practical}. Using identical logic as presented earlier, it is straightforward to show that the value of $\widehat{\condN}$ in \eqref{eq smart L} satisfies \eqref{eq smart N intro} up to an absolute constant factor. See the last paragraph of Section \ref{subsec: comments} for other remarks and observations about Theorem \ref{thm special step size complexity}. The proof of Theorem \ref{thm special step size complexity} is similar to that of Theorem \ref{thm overall complexity}, and is presented in Appendix \ref{appsec: proof of restarts}.

\subsection{Two lemmas, the proof of Theorem \ref{thm overall complexity}, and extensions} \label{subsec: proof of overall complexity}

Similar to the main complexity proofs in \cite{applegate2023faster}, the proof of Theorem \ref{thm overall complexity} involves two steps. The first step is to demonstrate that if the normalized duality gap of the iterates satisfies a certain sharpness condition, then rPDHG achieves linear convergence, with the total number of \textsc{PDHGstep} iterations being largely determined by this sharpness condition. The second step involves analyzing and bounding the sharpness condition using the condition measures $\mu_p$, $\mu_d$, $\thetax$, and $\thetas$ (plus other data-related quantities). Lemma \ref{lm: complexity of PDHG with adaptive restart} below embodies the first step.

\begin{lemma}\label{lm: complexity of PDHG with adaptive restart}
Suppose Algorithm \ref{alg: PDHG with restarts} is run starting from $z^{0,0} = (x^{0,0},y^{0,0} )= (0,0)$ using the $\beta$-restart condition, and let the step-sizes $\tau$ and $\sigma$ satisfy \eqref{eq  general step size requirement}. Suppose also that there exists  a scalar $\condN$ for which it holds that
	\begin{equation}\label{eq restart L condition}
		\dist_M(z^{n,0}, \calZ^\star) \ \le \ \condN  \cdot \rho(\|z^{n,0} - z^{n-1,0} \|_M;z^{n,0})  \ 
	\end{equation}
	for all $n \ge 1$. Let $T$ be the total number of \textnormal{\textsc{PDHGstep}} iterations that are run in order to obtain $n$ for which $z^{n,0}=(x^{n,0},s^{n,0})$ satisfies $\calE_d(x^{n,0},s^{n,0}) \le \eps$.  Then 
	\begin{equation}\label{drewcurse}
	T \ \le \ \frac{5}{\beta \ln(1/\beta)} \cdot   \condN  \cdot \ln\left(   
	\tilde c \cdot  \condN  \cdot \left(\frac{\calE_d(x^{0,0},s^{0,0} )}{\eps} 
	\right)\right) + 1\ ,
	\end{equation}
	where $\tilde c :=  \frac{4\sqrt{2}}{\beta\sqrt{1 - \sqrt{\tau\sigma} \lambda_{\max} } }  \left(
	\sqrt{\tau} + \sqrt{\sigma}\lambda_{\max}
	\right)\cdot\left(
	\frac{1}{\sqrt{\tau}}+ \frac{1}{\sqrt{\sigma}\lambda_{\min}}
	\right) $.
\end{lemma}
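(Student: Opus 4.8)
The plan is to bound the total number of PDHG iterations $T$ by (i) bounding the number of inner iterations within each outer loop, and (ii) bounding the number of outer loops needed to reach accuracy $\eps$. For the inner loops, I would use Corollary \ref{col: rho sublinear PDHG}: starting the $n$-th inner loop from $z^{n,0}$, after $k$ PDHG steps the normalized duality gap satisfies $\rho(\|\bar z^{n,k} - z^{n,0}\|_M;\bar z^{n,k}) \le 8\dist_M(z^{n,0},\calZ^\star)/k$. Using hypothesis \eqref{eq restart L condition} to bound $\dist_M(z^{n,0},\calZ^\star) \le \condL \cdot \rho(\|z^{n,0}-z^{n-1,0}\|_M;z^{n,0})$, the $\beta$-restart condition \eqref{catsdogs} is guaranteed to be triggered once $8\condL/k \le \beta$, i.e.\ after at most $k_n := \lceil 8\condL/\beta \rceil$ inner steps. (For the $n=0$ outer loop the restart condition is $k=1$, contributing a single extra iteration, which accounts for the ``$+1$'' in \eqref{drewcurse}.) Since this bound on $k_n$ is uniform over $n$, the total is $T \le 1 + N \cdot \lceil 8\condL/\beta \rceil \le 1 + N \cdot (1 + 8\condL/\beta)$ where $N$ is the number of outer loops.

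Next I would bound $N$, the number of restarts. The $\beta$-restart condition forces $\rho(\|z^{n,0}-z^{n-1,0}\|_M;z^{n,0}) \le \beta \cdot \rho(\|z^{n-1,0}-z^{n-2,0}\|_M;z^{n-1,0})$ for $n \ge 2$, so the sequence $\rho_n := \rho(\|z^{n,0}-z^{n-1,0}\|_M;z^{n,0})$ decays geometrically: $\rho_n \le \beta^{n-1} \rho_1$. Combining with \eqref{eq restart L condition} gives $\dist_M(z^{n,0},\calZ^\star) \le \condL \beta^{n-1}\rho_1$. Now I must convert $\dist_M$ to $\calE_d$ using Proposition \ref{fact: norm upperbound}, inequality \eqref{eq of lm: M norm to seperable norm 2}, which yields $\calE_d(x^{n,0},s^{n,0}) \le C_1 \dist_M(z^{n,0},\calZ^\star)$ for an explicit constant $C_1$ involving $\sqrt\tau$, $\sqrt\sigma\lambda_{\max}$, and $\sqrt{1-\sqrt{\tau\sigma}\lambda_{\max}}$; and in the other direction, bound $\rho_1 = \rho(\|z^{1,0}-z^{0,0}\|_M;z^{1,0})$ in terms of $\calE_d(x^{0,0},s^{0,0})$ — here I would first relate $\rho_1$ to $\dist_M(z^{1,0},\calZ^\star)$ via Corollary \ref{col: rho sublinear PDHG} applied to the $n=0$ inner loop (noting $z^{1,0}=\bar z^{0,k}$ is an average iterate, so the nonexpansive Lemma \ref{lm: nonexpansive property} gives $\dist_M(z^{1,0},\calZ^\star) \le \dist_M(z^{0,0},\calZ^\star)$), and then to $\calE_d(x^{0,0},s^{0,0})$ via \eqref{eq of lm: M norm to seperable norm}. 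Chaining these, $\calE_d(x^{n,0},s^{n,0}) \le \tilde c \cdot \condL \cdot \beta^{n-1} \cdot \calE_d(x^{0,0},s^{0,0})$ after absorbing constants into $\tilde c$. Solving for when the right side drops below $\eps$ gives $N \le 1 + \frac{1}{\ln(1/\beta)}\ln\!\big(\tilde c\,\condL\,\calE_d(x^{0,0},s^{0,0})/\eps\big)$.

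Finally I would combine the two bounds: $T \le 1 + N(1 + 8\condL/\beta)$. Using $\condL \ge 1$ and $\beta \in (0,1)$, the factor $1 + 8\condL/\beta \le 9\condL/\beta$, and the ``$+1$'' in the $N$-bound gets absorbed together with the logarithmic term (since $\ln(\tilde c\,\condL\,\calE_d/\eps) \ge $ some constant, or one can pad $\tilde c$); this produces $T \le \frac{9}{\beta\ln(1/\beta)}\condL\ln(\tilde c\,\condL\,\calE_d(x^{0,0},s^{0,0})/\eps) + 1$, matching \eqref{drewcurse}. I expect the main obstacle to be the bookkeeping in step two: carefully tracking the constants through the two applications of Proposition \ref{fact: norm upperbound} (in opposite directions) plus the two applications of Corollary \ref{col: rho sublinear PDHG}, and verifying that they collapse exactly into the stated $\tilde c = \frac{8\sqrt2}{\beta\sqrt{1-\sqrt{\tau\sigma}\lambda_{\max}}}(\sqrt\tau+\sqrt\sigma\lambda_{\max})(\frac{1}{\sqrt\tau}+\frac{1}{\sqrt\sigma\lambda_{\min}})$ — in particular making sure the $\calE_d$-to-$\rho_1$ direction at the start and the $\dist_M$-to-$\calE_d$ direction at iteration $n$ contribute the claimed symmetric product of $(\sqrt\tau+\sqrt\sigma\lambda_{\max})$ and $(\frac{1}{\sqrt\tau}+\frac{1}{\sqrt\sigma\lambda_{\min}})$ factors, and that the $\beta$ in the denominator of $\tilde c$ comes from the extra factor lost when the restart condition is checked against $\beta\rho_{n-1}$ rather than $\rho_{n-1}$.
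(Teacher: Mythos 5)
Your plan matches the paper's proof essentially step for step: bound the inner-loop length by $\lceil 8\condL/\beta\rceil \le 9\condL/\beta$ via Corollary \ref{col: rho sublinear PDHG} and hypothesis \eqref{eq restart L condition}, establish geometric decay $\rho_n \le \beta^{n-1}\rho_1$ from the restart rule, seed $\rho_1 \le 8\dist_M(z^{0,0},\calZ^\star)$ by applying Corollary \ref{col: rho sublinear PDHG} with $K=1$, and then translate between $\dist_M$ and $\calE_d$ in each direction via \eqref{eq of lm: M norm to seperable norm} and \eqref{eq of lm: M norm to seperable norm 2}, which is exactly where the two factors $(\sqrt\tau+\sqrt\sigma\lambda_{\max})$ and $(\tfrac{1}{\sqrt\tau}+\tfrac{1}{\sqrt\sigma\lambda_{\min}})$ arise. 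One small bookkeeping correction: the $\beta^{-1}$ in $\tilde c$ is not ``lost'' from comparing against $\beta\rho_{n-1}$; the paper introduces it simply to replace the ceiling $\lceil\cdot\rceil$ in the outer-iteration count by a non-integer expression (using $\lceil x\rceil \le x+1$), after which the resulting $+1$ is absorbed by the $N-1$ appearing in $T \le 1 + (N-1)\cdot 9\condL/\beta$ — the alternative explanation you mention (``pad $\tilde c$'') is the right one.
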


The proof of Lemma \ref{lm: complexity of PDHG with adaptive restart} is similar to the proof of \cite[Theorem 2]{applegate2023faster}, and proceeds by establishing an upper bound on the \textsc{PDHGstep} iteration count between restarts. The primary difference is that we are particularly interested in the role of the primal and dual step-sizes. The complete proof of Lemma \ref{lm: complexity of PDHG with adaptive restart} is presented in Appendix \ref{appsec: proof of restarts}.

Lemma \ref{lm: Mdistance upper bounded by normalized duality gap} embodies the second step described above.

\begin{lemma}\label{lm: Mdistance upper bounded by normalized duality gap} Suppose that the initial iterate of rPDHG is $z^{0} := (0,0)$, and let $z^b, \ z^c \in \mathbb{R}^n_+ \times \mathbb{R}^m $ satisfy $z^b \ne z^c$ and the nonexpansive inequalities $\|z^b - z^\star\|_M \le \| z^{0} - z^\star\|_M$ and $\|z^c - z^\star\|_M \le \| z^{0} - z^\star\|_M$ for all $z^\star \in \calZ^\star$. Then it holds that:
	\begin{equation}\label{eq of lm: Mdistance upper bounded by normalized duality gap} 
			\dist_M(z^b,\calZ^\star) \le  \widetilde{\condN} \cdot \rho(\|z^b - z^c\|_M;z^b) \ ,  
	\end{equation}
	in which
	\begin{equation}\label{eq of lm: value tildeN}
		 \widetilde{\condN} :=  \left(
			\begin{array}{rl}
				&	\left(
				\frac{3\sqrt{2} }{\sqrt{\sigma\tau} \lambda_{\min}\mu_p }  
				+ \frac{\sqrt{2}  }{\sigma \lambda_{\min}^2\mu_d } \cdot \frac{\| P_{\linVp} (c)\|}{\|q\|}
				\right) \cdot \left(\thetax + \frac{\| P_{\linVp^\bot} (c)\| }{\| P_{\linVp} (c)\|}\right)  \\
				&  + \left( \frac{2\sqrt{2}}{\sqrt{\sigma\tau} \lambda_{\min} \mu_d } + \frac{\sqrt{2}}{\tau\mu_p } \cdot \frac{\|q\|}{\| P_{\linVp}(c)\| }  \right) \cdot \thetas  \\
				& + \left(  \frac{4}{ \sqrt{\tau}\mu_p \| P_{\linVp}(c)\| } + \frac{4}{\sqrt{\sigma}\mu_d \|q\| \lambda_{\min}  } \right) \cdot \left(\frac{1}{\sqrt{\tau}} \dist(0,\calX^\star)  + \frac{1}{\lambda_{\min}\sqrt{\sigma}} \dist(c,\calS^\star)\right) 
			\end{array} 
			\right) \ . 
	\end{equation}
\end{lemma}

The proof of Lemma \ref{lm: Mdistance upper bounded by normalized duality gap} is presented in Section \ref{subsec: proof of Mdistance upper bounded by normalized duality gap}. Armed with Lemma \ref{lm: complexity of PDHG with adaptive restart} and Lemma \ref{lm: Mdistance upper bounded by normalized duality gap}, we now prove Theorem \ref{thm overall complexity}.

\begin{proof}[Proof of Theorem \ref{thm overall complexity}]
For any $n \ge 1$ it holds that $z^{n,0} = \bar z^{n-1,K}=\tfrac{1}{K}\sum_{i=1}^K z^{n-1,i}$ where $K$ is the total number of inner iterations of rPDHG run in the outer loop iteration $n-1$, whose initial iterate value is $z^{n-1,0}$.  It then follows from the nonexpansive properties of PDHG from Lemma \ref{lm: nonexpansive property} that $\|z^{n,0} - z^\star\|_M \le\|z^{n-1,0} - z^\star\|_M$ for each $z^\star \in \calZ^\star$.  By telescoping inequalities we then have $$\|z^{n,0} - z^\star\|_M \le\|z^{n-1,0} - z^\star\|_M \le \cdots \le \| z^{0,0} - z^\star\|_M $$
for each $z^\star \in \calZ^\star$.  Now notice that $z^0 := z^{0,0}$, $z^b := z^{n,0}$, and $z^c := z^{n-1,0}$ satisfy the hypotheses of Lemma \ref{lm: Mdistance upper bounded by normalized duality gap}, whereby it holds that \begin{equation}\label{eq of lm: Mdistance upper bounded by normalized duality gap2}
		\begin{aligned}
			& \dist_M(z^{n,0},\calZ^\star) \le   \widetilde{\condN} \cdot \rho(\|z^{n,0} - z^{n-1,0}\|_M;z^{n,0})   \ .
		\end{aligned}
	\end{equation}	
From the supposition of Theorem \ref{thm overall complexity} we have $c \in \linVp$ and therefore $\|P_{\linVp}(c)\| = \|c\| $, $\|P_{\linVp^\bot}(c)\| = 0 $, and $\|c\| = \dist(0,V_d)$.  Also by construction we have $\|q\| = \dist(0,V_p)$. Substituting the step-size values \eqref{eq smart step size 1} into \eqref{eq of lm: value tildeN} and using the above norm equalities in \eqref{eq of lm: value tildeN} yields:
	\begin{equation}\label{pence}
		\begin{aligned}
			& \widetilde{\condN} =  6\sqrt{2}\kappa \left(
			\frac{1}{\mu_p} + \frac{\tfrac{1}{3}}{\mu_d} \right)\thetax + 4\sqrt{2}\kappa \left(
			\frac{\tfrac{1}{2}}{\mu_p} + \frac{1}{\mu_d} \right)\thetas + 8\kappa\left(
			\frac{1}{\mu_p} + \frac{1}{\mu_d} \right) \left[ \frac{ \dist(0,\calX^\star) }{\dist(0, V_p)} + \frac{\dist(c,\calS^\star)}{ \dist(0,V_d)} \right]	
			 \ .
		\end{aligned}
	\end{equation}
Now notice that the value of $\condN$ specified in \eqref{eq easy L} is at least as large as the value of $\widetilde{\condN}$ above, whereby it holds that 
	$
	\dist_M(z^{n,0},\calZ^\star) \le  \condN \cdot \rho(\|z^{n,0} - z^{n-1,0}\|_M;z^{n,0})   
	$
for the value of $ \condN $ specified in  \eqref{eq easy L}. Therefore condition \eqref{eq restart L condition} of Lemma \ref{lm: complexity of PDHG with adaptive restart} is satisfied, and it follows from Lemma \ref{lm: complexity of PDHG with adaptive restart} that $T$ satisfies \eqref{drewcurse} with the value of $\tilde c$ specified in the statement of the lemma, namely:
\begin{equation}\label{drewcurse2}
	T \ \le \ \frac{5}{\beta \ln(1/\beta)} \cdot   \condN  \cdot \ln\left(   
	\tilde c \cdot  \condN  \cdot \left(\frac{\calE_d(x^{0,0},s^{0,0} )}{\eps} 
	\right)\right) + 1\ .
	\end{equation}
 Substituting in the step-sizes \eqref{eq smart step size 1} and $\beta = 1/e$ into the value of $\tilde c$ in Lemma \ref{lm: complexity of PDHG with adaptive restart} we find that 
$\tilde c = 8 e \cdot \left(1+\kappa \frac{\|c\|}{\|q\|}\right)\left(1 + \frac{\|q\|}{\|c\|}\right)$, and using $\beta = 1/e$ and the value of $\tilde{c}$ above in \eqref{drewcurse2} we finally arrive at:
\begin{equation}\label{drewcurse3}
	T \ \le \ 5e \cdot   \condN  \cdot \ln\left(   
	8 e  \cdot  \condN  \cdot \left(\frac{\calE_d(x^{0,0},s^{0,0} )}{\eps}\right) \Bigg(1+\kappa \frac{\|c\|}{\|q\|}\Bigg)\Bigg(1 + \frac{\|q\|}{\|c\|}\Bigg)
	\right) + 1\ .
	\end{equation}
	Note that in logarithm term in \eqref{drewcurse3} that because $\kappa \ge 1$ and $\|q\| = \|b\|_Q$ (where $\|b\|_Q$ denotes $\|A^\top (AA^\top)^\dagger b\|$) it follows that
	\begin{equation}\label{drewcurse4_revision}
	8 e \cdot \Bigg(1+\kappa \frac{\|c\|}{\|q\|}\Bigg)\Bigg(1 + \frac{\|q\|}{\|c\|}\Bigg)  	\le   \calD \ 
	\end{equation}
	where $\calD$ is defined in \eqref{eq:def_D}. Substituting \eqref{drewcurse4_revision} into \eqref{drewcurse3} yields \eqref{eq overall complexity},
	which completes the proof of the theorem.
	\end{proof}

We now present several remarks.

\begin{remark}[Extensions: other parameter settings, and allowing $c \notin \linVp$.]\label{rmk:talk about other parameters} Theorem \ref{thm overall complexity} uses the specific values of the primal and dual step-sizes in \eqref{eq smart step size 1}, uses $\beta = 1/e$, and assumes that $c \in \linVp$.  The results could have been presented without these specific conditions, albeit at the expense of more complicated expressions and more challenging exposition. Indeed, the step-sizes $\tau$ and $\sigma$ could have been left unassigned -- just as they are in Lemma \ref{lm: complexity of PDHG with adaptive restart} and Lemma \ref{lm: Mdistance upper bounded by normalized duality gap}. Using \eqref{eq of lm: Mdistance upper bounded by normalized duality gap2} directly with Lemma \ref{lm: complexity of PDHG with adaptive restart} and some algebraic calculations yields the proof for other step-size assignments. 

If $c \notin \linVp$, then the value of $\condN$ in \eqref{eq easy L} changes based on the changed value of $\widetilde{\condN}$ in \eqref{eq of lm: value tildeN}. Observing \eqref{eq of lm: value tildeN}, we see that $\|P_{\linVp^\bot} (c)\|$ appears only in the last term of the first of the three lines; if $c \notin \linVp$ then $\|P_{\linVp^\bot} (c)\| \neq 0$, and then the iteration bound in Theorem \ref{thm overall complexity} will change by replacing $\thetax$ with $\thetax + \frac{\|P_{\linVp^\bot} (c)\|}{\|P_{\linVp}(c)\|}$ throughout, and the step-sizes in \eqref{eq smart step size 1} will also change. If the step-sizes follow some other rules, then in addition to the change of $\condN$ based on $\widetilde{\condN}$ in \eqref{eq of lm: value tildeN}, the value of $\tilde{c}$ in Lemma \ref{lm: complexity of PDHG with adaptive restart} will also change. %For example, the step-sizes follow \eqref{eq smart step size 2} in Theorem \ref{thm special step size complexity}, and then the iteration bound and its proof differ only by the value of $\condN$ and $\tilde{c}$. (See the complete proof in Appendix \ref{appsec: proof of restarts}.) 

The choice of the value of the scalar $\beta$ affects only the expression $\frac{5}{\beta\ln(1/\beta)}$ outside the logarithmic term in \eqref{drewcurse} of Lemma \ref{lm: complexity of PDHG with adaptive restart}.  The specified value $\beta = 1/e$ was chosen to minimize the scalar $\frac{5}{\beta \ln(1/\beta)}$, but the value of $\beta$ could have been left unassigned, again at the expense of expositional simplicity.\end{remark}
	
	\ignore{Unpacking the proofs above, we see that $\| P_{\linVp^\bot} (c)\| $ only affects the proof in the bound \eqref{eq of lm: Mdistance upper bounded by normalized duality gap2}.  Suppose that $\| P_{\linVp^\bot} (c)\| \ne 0$ but is not too large, say, $\|P_{\linVp^\bot}(c)\| \le \theta_p^\star(\calX^\star) \|P_{\linVp}(c)\|$.  Then the right-hand side of \eqref{eq of lm: Mdistance upper bounded by normalized duality gap2}, and hence the value of $\condN$ in Lemma \ref{lm: complexity of PDHG with adaptive restart}, will only increase by a constant factor of at most $2$.}

\begin{remark}[Relation to the bound \eqref{intro:alpha_complexity} from \cite{applegate2023faster}]\label{remark: comparison with alpha complexity}
	The proof of Theorem \ref{thm overall complexity} is based in part on Lemma \ref{lm: complexity of PDHG with adaptive restart}. We note that the idea of Lemma \ref{lm: complexity of PDHG with adaptive restart} stems from the iteration bound \eqref{intro:alpha_complexity} from \cite{applegate2023faster}.  Indeed the condition \eqref{eq restart L condition} in Lemma \ref{lm: complexity of PDHG with adaptive restart} is quite similar to the sharpness condition \eqref{intro:sharpness} from \cite{applegate2023faster}; the difference is only in using $\condN$ instead of $\frac{1}{\alpha}$ and using the $M$-norm distance $\dist_M(z^{n,0},\calZ^\star)$ instead of the Euclidean distance $\dist(z^{n,0},\calZ^\star)$.  
\end{remark}

\begin{remark}[Relating our results to sharpness of the normalized duality gap functional in \cite{applegate2023faster}]\label{greengreen}  It follows from Lemma \ref{lm: Mdistance upper bounded by normalized duality gap} and the proof of Theorem \ref{thm overall complexity} that the normalized duality gap functional $\rho(r;z)$ obeys an ``$\alpha$-sharpness'' condition $\rho(\|z^{n,0} - z^{n-1,0} \|_M;z^{n,0}) \ge \alpha \cdot \dist_M(z^{n,0}, \calZ^\star)$ along the algorithm iterates for 
$$\alpha := \frac{1}{\condN} = \frac{1}{8.5 \kappa \left(
		\frac{1}{\mu_p} + \frac{1}{\mu_d} \right)
		\left(  
		\thetax +\thetas   +  \frac{ \dist(0,\calX^\star) }{\dist(0, V_p)} +  \frac{\dist(c,\calS^\star)}{ \dist(0,V_d)} 
		\right) }\ , $$
where the expression for $\condN$ above is from \eqref{eq easy L}. Note that this is just like the $\alpha$-sharpness condition \eqref{intro:sharpness} except it is expressed with the $M$-norm distance $\dist_M(z^{n,0},\calZ^\star)$ instead of the Euclidean distance.  Furthermore, it follows from Proposition \ref{fact: norm upperbound} that $\dist_M(z^{n,0}, \calZ^\star) \ge \gamma \cdot \dist(z^{n,0}, \calZ^\star) $ for a certain constant  $\gamma$ whose expression is defined using $\tau,\sigma,\lambda_{\max}$, see the formulas in Proposition \ref{fact: norm upperbound} for details.
\end{remark}

\begin{remark}[Extensions to other first-order methods]\label{remark: extension to other first-order methods}
	The proof of Theorem \ref{thm overall complexity} is based on the two ``steps'' of Lemma \ref{lm: complexity of PDHG with adaptive restart} and Lemma \ref{lm: Mdistance upper bounded by normalized duality gap}, and it is quite possible that it can be extended to certain other first-order methods. Indeed, \cite{applegate2023faster} proves that some iteration bounds analogous to \eqref{intro:alpha_complexity} also apply to other first-order methods, such as the alternating direction method of multipliers (ADMM) and extragradient method (EGM). For these methods, if similar results to Lemma \ref{lm: Mdistance upper bounded by normalized duality gap} can be extended (which we believe likely is the case), then computational guarantees similar to Theorem \ref{thm overall complexity} may be obtained for these other methods.  
\end{remark}

Section \ref{lm: Mdistance upper bounded by normalized duality gap} below is focused on the proof of Lemma \ref{lm: Mdistance upper bounded by normalized duality gap}.

\subsection{Proof of Lemma \ref{lm: Mdistance upper bounded by normalized duality gap}}\label{subsec: proof of Mdistance upper bounded by normalized duality gap}

This subsection is devoted to proving Lemma \ref{lm: Mdistance upper bounded by normalized duality gap}. As part of this task, we will need the following two very specific lemmas.

\begin{lemma}\label{lm: straight line mono}
	For any $\tilde z  = (\tilde x,\tilde y)$, let $z^\star \in \arg\min_{z \in \calZ^\star} \|z - \tilde z\|_M$, and define for all $t \ge 0$ 
	\begin{equation}\label{eq of lm: straight line mono 1}
		\tilde {z}_t := \tilde z + t\cdot(\tilde z -  z^\star) \  \
	\end{equation} 
	(whereby $\tilde z_0 = \tilde z$). Suppose that there exist nonnegative scalars $C_1$, $C_2$, $C_3$ such that the following inequality \eqref{eq of lm: straight line mono 2} holds for $t=0$:
	\begin{equation}\tag{$\calI_t$}\label{eq of lm: straight line mono 2}
			\dist_M(\tilde{z}_t,\calZ^\star) \le  C_1 \cdot \dist(\tilde{x}_t,V_p) + C_2 \cdot \dist(\tilde{s}_t, \mathbb{R}^n_+) + C_3 \cdot \max\{0, \gap(\tilde {x}_t, \tilde {s}_t) \} \ . 
	\end{equation}
	Then inequality \eqref{eq of lm: straight line mono 2} holds for all $t \ge 0$.
\end{lemma}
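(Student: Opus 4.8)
The plan is to show that each of the four quantities appearing in $(\calI_t)$ has a transparent dependence on $t$, and then chain these dependencies together. Write $z^\star = (x^\star, y^\star)$ and $s^\star := c - A^\top y^\star$, so that $x^\star \in V_p$ (it is primal feasible), $s^\star \ge 0$ (it is dual feasible), and $\gap(x^\star, s^\star) = 0$ (both components are optimal, so strong LP duality applies under Assumption~\ref{assump: general LP}); and write $\tilde s := c - A^\top \tilde y$ and $\tilde s_t := c - A^\top \tilde y_t$. The single algebraic fact driving everything is that $\tilde z_t - z^\star = (1+t)(\tilde z - z^\star)$, and hence also $\tilde x_t - x^\star = (1+t)(\tilde x - x^\star)$ and, using $\tilde s_t - s^\star = -A^\top(\tilde y_t - y^\star)$, that $\tilde s_t - s^\star = (1+t)(\tilde s - s^\star)$.

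First I would dispatch the three ``easy'' terms, each of which scales by exactly $1+t$. For the left-hand side, since $z^\star \in \calZ^\star$ attains $\dist_M(\tilde z, \calZ^\star)$, positive homogeneity of $\|\cdot\|_M$ gives $\dist_M(\tilde z_t, \calZ^\star) \le \|\tilde z_t - z^\star\|_M = (1+t)\|\tilde z - z^\star\|_M = (1+t)\dist_M(\tilde z, \calZ^\star)$. For the primal-feasibility term, since $V_p = x^\star + \linVp$ is an affine subspace with $\linVp$ a linear subspace, $\dist(\tilde x_t, V_p) = \dist\!\bigl((1+t)(\tilde x - x^\star), \linVp\bigr) = (1+t)\dist(\tilde x, V_p)$. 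For the duality-gap term, $\gap(x,s) = c^\top x - q^\top(c - s)$ is affine in $(x,s)$ and vanishes at $(x^\star, s^\star)$, so $\gap(\tilde x_t, \tilde s_t) = \gap(\tilde x, \tilde s) + t\,[\gap(\tilde x, \tilde s) - \gap(x^\star, s^\star)] = (1+t)\gap(\tilde x, \tilde s)$, and since $1 + t > 0$ this yields $\max\{0, \gap(\tilde x_t, \tilde s_t)\} = (1+t)\max\{0, \gap(\tilde x, \tilde s)\}$.

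The one term that does \emph{not} scale by exactly $1+t$ is the orthant-distance term $\dist(\tilde s_t, \mathbb{R}^n_+) = \|(\tilde s_t)^-\|$; here the key observation is that the inequality runs in the favorable direction, namely $\dist(\tilde s_t, \mathbb{R}^n_+) \ge (1+t)\dist(\tilde s, \mathbb{R}^n_+)$, which is precisely what one wants on the right-hand side of an upper bound. I would prove this coordinatewise from $(\tilde s_t)_i = (1+t)\tilde s_i - t\,s^\star_i$: for coordinates with $\tilde s_i \ge 0$ the bound $((\tilde s_t)_i)^- \ge 0 = (1+t)(\tilde s_i)^-$ is trivial, while for coordinates with $\tilde s_i < 0$ the nonnegativity of $s^\star_i$ and of $t$ forces $(\tilde s_t)_i \le (1+t)\tilde s_i < 0$, hence $((\tilde s_t)_i)^- = -(\tilde s_t)_i \ge -(1+t)\tilde s_i = (1+t)(\tilde s_i)^-$; squaring and summing over $i$ gives the claim. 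Finally I would assemble the pieces: starting from $(\calI_0)$ and multiplying through by $1+t \ge 1$, the three identities of the previous paragraph convert $(1+t)C_1\dist(\tilde x, V_p)$ and $(1+t)C_3\max\{0,\gap(\tilde x,\tilde s)\}$ into the corresponding $t$-indexed quantities and bound $\dist_M(\tilde z_t,\calZ^\star)$ above by the scaled left-hand side, while the coordinatewise inequality together with $C_2 \ge 0$ gives $(1+t)C_2\dist(\tilde s, \mathbb{R}^n_+) \le C_2\dist(\tilde s_t, \mathbb{R}^n_+)$; chaining these inequalities produces exactly $(\calI_t)$. The only genuinely non-routine point is recognizing that the orthant term behaves oppositely to the other three and that this is exactly what the argument needs; everything else is bookkeeping with affine maps and positive homogeneity.
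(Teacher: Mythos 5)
Your proof is correct and follows essentially the same approach as the paper: establish that each of the four quantities in $(\calI_t)$ scales by (or is bounded relative to) $1+t$, with the orthant-distance term running in the favorable direction because $s^\star \ge 0$. The one cosmetic difference is that you bound $\dist_M(\tilde z_t,\calZ^\star) \le (1+t)\dist_M(\tilde z,\calZ^\star)$ directly via $z^\star$ as a candidate projection, whereas the paper notes that convexity of $\calZ^\star$ makes this an equality; your inequality is all that is needed and is marginally simpler.
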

\begin{proof} Let $z^\star \in \arg\min_{z \in \calZ^\star}\|z - \tilde{z}\|_M$ be given. Then because $\calZ^\star$ is a convex set, it follows that $z^\star \in \arg\min_{z \in \calZ^\star}\|z - \tilde{z}_t\|_M$ for all $t \ge 0$.  Therefore $\dist_M(\tilde{z}_t,\calZ^\star) = \| z^\star - \tilde{z}_t\|_M = (1+t)\cdot \| \tilde z - z^\star\|_M  = (1+t)\cdot \dist_M(\tilde z,\calZ^\star)$.
	
	Regarding the terms on the right-hand side of \eqref{eq of lm: straight line mono 2}, for $t \ge 0$ we have
	$\dist( \tilde{x}_t,V_p)  = (1+ t)\cdot \dist(\tilde x,V_p)$ and $ \max\{0, \gap( \tilde{x}_t,  \tilde{s}_t) \}= (1+t) \cdot \max\{0, \gap(\tilde x, \tilde s) \}$. It also holds that:
	$$
	\dist( \tilde{s}_t, \mathbb{R}^n_+)  = \|( \tilde{s}_t)^-\| = \| (\tilde s + t\cdot(\tilde s -  s^\star))^-\| \ge \| (1+t)(\tilde s)^-\| = (1+ t)\cdot \dist(\tilde s, \mathbb{R}^n_+)   \ ,
	$$
	where the inequality above follows since $s^\star \ge 0$ and hence $(\tilde s + t\cdot(\tilde s -  s^\star))^- \ge (1+t)(\tilde s)^-$.
	Combining the above equalities and inequalities proves \eqref{eq of lm: straight line mono 2} for all $t \ge 0$.
\end{proof}

\begin{lemma}\label{lm: converegnce under M distance}
	Suppose that $\tau$ and $\sigma$ satisfy \eqref{eq  general step size requirement}. For any $x^0 \in \mathbb{R}^n_+$,  $y^0\in\mathbb{R}^m$ and $s^0:= c - A^\top y^0 \in\mathbb{R}^n$, then $z^0:= (x^0,y^0)$ satisfies:    
	\begin{equation}\label{eq  M-norm distance converge}
		\dist_M(z^0,\calZ^\star) \le  C_1 \cdot \dist(x^0,V_p) + C_2 \cdot \dist(s^0, \mathbb{R}^n_+) + C_3 \cdot \max\{0,\gap(x^0,s^0)\} \ ,
	\end{equation}
	where 
	\begin{equation}\label{eq of lm: converegnce under M distance}
		\begin{array}{rl}
			&C_1  :=  \Big( \thetax \| P_{\linVp}(c)\|  +  \| P_{\linVp^\bot}(c)\| \Big)  \cdot 
			\left(
			\frac{3\sqrt{2}}{\sqrt{\tau} \mu_p \| P_{\linVp}(c)\| } + \frac{\sqrt{2} }{\sqrt{\sigma} \mu_d \|q\| \lambda_{\min}}
			\right) \vspace{0.1cm}  \\
			&C_2  := \thetas \|q\| \cdot
			\left(
			\frac{2\sqrt{2}}{\sqrt{\sigma} \mu_d \|q\| \lambda_{\min}} + \frac{\sqrt{2}}{\sqrt{\tau} \mu_p \| P_{\linVp}(c)\| }
			\right) \vspace{0.1cm}  \\
			&C_3  :=  \left(  \frac{\sqrt{2}}{ \sqrt{\tau}\mu_p \| P_{\linVp}(c)\| } + \frac{\sqrt{2}}{\sqrt{\sigma}\mu_d \|q\| \lambda_{\min}  } \right) \ .
		\end{array}
	\end{equation}
\end{lemma}
\begin{proof} We presume that $z^0 \notin \calZ^\star$, for otherwise \eqref{eq  M-norm distance converge} follows trivially.
	Let $\tilde{x}^0$ be the projection of $x^0$ onto $V_p$, namely $\tilde x^0 = P_{V_p}(x^0)$. Then $\tilde{x}^0 - x^0$ is orthogonal to $\linVp$.  Also let $\hat{x}$ and $\hat{s}$ be the projections of $\tilde{x}^0$ and $s^0$ onto $\calF_p$ and $\calF_d$, respectively, namely $
	\hat{x}:= \arg\min_{x\in\calF_p} \|x - \tilde{x}^0\| $ and $\hat{s}:= \arg\min_{s\in\calF_d} \|s-s^0\| $.
	Since $\hat{x}$ and $\hat{s}$ are feasible for $\calF_p$ and $\calF_d$, respectively, the duality gap $\gap(\hat{x},\hat{s})$ is nonnegative. Furthermore, we have
	\begin{equation}\label{eq  lm: converegnce under M distance 1-pre0}
		\begin{aligned}
			\gap(\hat{x},\hat{s}) & \  = c^\top \hat{x} - q^\top (c - \hat{s})  = \gap(x^0,s^0) + c^\top (\hat{x} - x^0) + q^\top (\hat{s} - s^0) \\
			& \ \le \gap(x^0,s^0) +c^\top (\hat{x} - x^0) + \|q\| \cdot \dist(s^0,\calF_d) \ ,
		\end{aligned}
	\end{equation}
	and also 
	\begin{equation}\label{eq  lm: converegnce under M distance 1-pre1}
		\begin{small}
		\begin{aligned}
			& c^\top (\hat{x} - x^0)    = \left(P_{\linVp}(c) + P_{\linVp^\bot}(c)\right)^\top \left(\left(\hat{x} - \tilde{x}^0\right) + \left(\tilde{x}^0 - x^0\right)\right) =   P_{\linVp}(c)^\top \left(\hat{x} - \tilde{x}^0\right)  
			+ P_{\linVp^\bot}(c)^\top\left(\tilde{x}^0 - x^0\right) \\
			& \le \| P_{\linVp}(c)\| \cdot \left\| \hat{x} - \tilde{x}^0 \right\|
			+ \| P_{\linVp^\bot}(c)\| \cdot \left\| \tilde{x}^0 - x^0 \right\| = \| P_{\linVp}(c)\| \cdot \dist\left(\tilde{x}^0,\calF_p\right)
			+ \| P_{\linVp^\bot}(c)\| \cdot \dist(x^0,V_p) \ ,
		\end{aligned}
		\end{small}
	\end{equation}
	where the second equality above is due to $\hat{x} - \tilde{x}^0 \in \linVp$ (because both $\hat{x}$, $\tilde{x}^0 \in V_p$) and $\tilde{x}^0 - x^0 \in \linVp^\bot$. Substituting \eqref{eq  lm: converegnce under M distance 1-pre1} into \eqref{eq  lm: converegnce under M distance 1-pre0} then yields:
	\begin{equation}\label{eq  lm: converegnce under M distance 1}
		\begin{aligned}
			\gap(\hat{x},\hat{s})  \le \gap(x^0,s^0) + \| P_{\linVp}(c)\| \cdot \dist\left(\tilde{x}^0,\calF_p\right)
			+ \| P_{\linVp^\bot}(c)\| \cdot \dist(x^0,V_p)  + \|q\| \cdot \dist(s^0,\calF_d) \ .
		\end{aligned}
	\end{equation}

	Now we aim to replace the distance term involving $\tilde{x}^0$ in the right-hand side of \eqref{eq  lm: converegnce under M distance 1} with a term involving $x^0$. From the definition of the \er~$\theta_p(\cdot)$ we have:
	\begin{equation}\label{eq  lm: converegnce under M distance 1-0}
		\begin{aligned}
			\dist(\tilde{x}^0,\calF_p)& \ = \theta_p(\tilde{x}^0) \cdot \dist(\tilde{x}^0, \mathbb{R}^n_+) 
			%\le  \theta_p(\tilde{x}^0) \cdot \left( \dist(x^0, \mathbb{R}^n_+)  + \|\tilde{x}^0 - x^0\| \right) 
			\ \le  \theta_p(\tilde{x}^0) \cdot \|\tilde{x}^0 - x^0\|   =  \theta_p(\tilde{x}^0) \cdot \dist(x^0, V_p) \ ,
		\end{aligned}
	\end{equation}
	where the inequality uses ${x}^0 \in \mathbb{R}^n_+ $. Note that $\dist(x^0,\calF_p)\le 	\dist(\tilde{x}^0,\calF_p) + \|x^0 - \tilde{x}^0\|  = 	\dist(\tilde{x}^0,\calF_p) + \dist(x^0,V_p) $, 	so using \eqref{eq  lm: converegnce under M distance 1-0} we obtain 
	\begin{equation}\label{eq  lm: converegnce under M distance 1-1}
		\begin{aligned}
			\dist(x^0,\calF_p) \le (  \theta_p(\tilde{x}^0) +1)\cdot \dist(x^0, V_p) \ .
		\end{aligned}
	\end{equation}
	Similarly, since $s^0 \in V_d$, using the \er~$\theta_d(\cdot) $ we have:
	\begin{equation}\label{eq  lm: converegnce under M distance 1-2}
		\dist(s^0,\calF_d)\le \theta_d(s^0) \cdot \dist(s^0,\mathbb{R}^n_+) \ .
	\end{equation}
	Substituting \eqref{eq  lm: converegnce under M distance 1-0} and \eqref{eq  lm: converegnce under M distance 1-2} into \eqref{eq  lm: converegnce under M distance 1} yields:
	\begin{equation}\label{eq  lm: converegnce under M distance 2}
		\gap(\hat{x},\hat{s}) \le  \gap(x^0,s^0) + \left(\| P_{\linVp}(c)\|   \theta_p(\tilde{x}^0) + \| P_{\linVp^\bot}(c)\| \right) \cdot \dist(x^0,V_p) + \|q\|\cdot \theta_d(s^0) \cdot \dist(s^0,\mathbb{R}^n_+) \ .
	\end{equation}
	
	Let us now use \eqref{eq  lm: converegnce under M distance 2} to bound the distances to optima. 
	Note that the duality gap $\gap(\hat{x},\hat{s})$ is an upper bound for both $c^\top \hat{x} - f^\star$ and $f^\star - q^\top(c- \hat{s})$, where $f^\star$ denotes the optimal objective value. Then because $\hat{x} \in V_p$ and $\hat{s} \in V_d$ we have: 
	\begin{equation*} 
				\dist(\hat{x}, V_p\cap \{x : c^\top x = f^\star\}) \le \frac{\gap(\hat{x},\hat{s})}{\| P_{\linVp}(c)\| } \text{ and }
			 \dist(\hat{s}, V_d\cap \{s : q^\top (c-s) = f^\star\}) \le \frac{\gap(\hat{x},\hat{s})}{\|P_{\linVd}(q)\|} \ , 
	\end{equation*}
and note that $ \|P_{\linVd}(q)\| = \|q\| $ because $q \in \linVd$. Because $\gap(\hat{x},\hat{s}) \ge |c^\top \hat{x} - f^\star|$ and $\gap(\hat{x},\hat{s}) \ge |q^\top(c - \hat{s}) - f^\star|$, we have:
	\begin{equation}\label{eq  lm: converegnce under M distance 3}
		\begin{array}{ll}
			&\dist(\hat{x},\calX^\star) \le \frac{\dist(\hat{x}, V_p\cap \{x : c^\top x = f^\star\}) }{\mu_p} \le \frac{1}{\mu_p} \cdot \frac{\gap(\hat{x},\hat{s})}{\| P_{\linVp}(c)\| }  \ , \vspace{0.1cm}  \\
			&\dist(\hat{s},\calS^\star) \le \frac{\dist(\hat{s}, V_d\cap \{s : q^\top (c-s) = f^\star\}) }{\mu_d} \le \frac{1}{\mu_d} \cdot \frac{\gap(\hat{x},\hat{s})}{\|q\|}  \ . 
		\end{array}
	\end{equation}
	Now since
	$\dist(x^0,\calX^\star) \le \|x^0 - \hat{x}\| + \dist(\hat{x},\calX^\star) = \dist(x^0,\calF_p)+ \dist(\hat{x},\calX^\star)$, using \eqref{eq  lm: converegnce under M distance 1-1} and \eqref{eq  lm: converegnce under M distance 3} implies that:
	\begin{equation}\label{eq  lm: converegnce under M distance 4-0}
		\dist(x^0,\calX^\star) \le ( \theta_p(\tilde{x}^0) + 1)\cdot \dist(x^0,V_p)  + \frac{1}{\mu_p} \cdot \frac{\gap(\hat{x},\hat{s})}{\| P_{\linVp}(c)\| }  \ . 
	\end{equation}
	Combining \eqref{eq  lm: converegnce under M distance 2} and \eqref{eq  lm: converegnce under M distance 4-0} we obtain:
	\begin{equation}\label{eq  lm: converegnce under M distance 4}
		\begin{aligned}
			& \dist(x^0,\calX^\star)  \ \le    ( \theta_p(\tilde{x}^0) + 1)\cdot \dist(x^0,V_p)
			\\
			& \ \ \  \  \ +\frac{\gap(x^0,s^0) + \left(\| P_{\linVp}(c)\|   \theta_p(\tilde{x}^0) + \| P_{\linVp^\bot}(c)\| \right) \cdot \dist(x^0,V_p) + \|q\|\cdot \theta_d(s^0) \cdot \dist(s^0,\mathbb{R}^n_+)}{\mu_p \| P_{\linVp}(c)\| }  \\
			& \ = \frac{\gap(x^0,s^0) }{\mu_p \| P_{\linVp}(c)\| } + \left(
			\frac{ \theta_p(\tilde{x}^0)}{\mu_p } +
			\frac{\| P_{\linVp^\bot}(c)\| }{\mu_p \| P_{\linVp}(c)\| } 
			+ \theta_p(\tilde{x}^0) + 1
			\right) \cdot \dist(x^0,V_p) + \frac{\|q\| \theta_d(s^0) }{\mu_p \| P_{\linVp}(c)\| } \cdot \dist(s^0,\mathbb{R}^n_+) \ .
		\end{aligned}
	\end{equation}
	Note that because $ \mu_p \le 1$ and $\theta_p(\tilde{x}^0) \ge 1$, it follows that \eqref{eq  lm: converegnce under M distance 4} can be relaxed to:
	\begin{equation}\label{eq  lm: converegnce under M distance 4-2}
		\begin{aligned}
			\dist(x^0,\calX^\star) & \le \frac{\gap(x^0,s^0) }{\mu_p \| P_{\linVp}(c)\| } + 
			\left(\frac{3\theta_p(\tilde{x}^0)}{\mu_p} +\frac{\| P_{\linVp^\bot}(c)\| }{\mu_p \| P_{\linVp}(c)\| } 
			\right)
			\cdot \dist(x^0,V_p) + \frac{\|q\| \theta_d(s^0) }{\mu_p \| P_{\linVp}(c)\| } \cdot \dist(s^0,\mathbb{R}^n_+) \ .
		\end{aligned}
	\end{equation}
	Using almost identical logic applied to $s^0$ instead of $x^0$, we obtain:
	\begin{equation}\label{eq  lm: converegnce under M distance 5}
		\dist(s^0,\calS^\star) 
		\le \frac{\gap(x^0,s^0) }{\mu_d \|q\|} +
		\frac{2\theta_d(s^0)}{\mu_d} \cdot \dist(s^0,\mathbb{R}^n_+) + \frac{\| P_{\linVp}(c)\| \theta_p(\tilde{x}^0)  + \| P_{\linVp^\bot}(c)\| }{\mu_d \|q\|} \cdot \dist(x^0,V_p) \ .
	\end{equation}
	
	Combining \eqref{eq  lm: converegnce under M distance 4-2} with \eqref{eq  lm: converegnce under M distance 5} and using the right-most inequality of \eqref{eq of lm: M norm to seperable norm}, it follows that
	\begin{equation}\label{ngeq  M-norm distance converge}
		\dist_M(z^0,\calZ^\star) \le  \bar C_1(z^0)  \cdot \dist(x^0,V_p) + \bar C_2(z^0)  \cdot \dist(s^0, \mathbb{R}^n_+) + \bar C_3(z^0)  \cdot \max\{0,\gap(x^0,s^0)\} \ ,
	\end{equation}
	where 
	\begin{equation}\label{ngeq of lm: converegnce under M distance}
		\begin{array}{rl}
			&\bar C_1(z^0) :=\Big( \theta_p(P_{V_p}(x^0))\| P_{\linVp}(c)\|  +  \| P_{\linVp^\bot}(c)\| \Big)  \cdot 
			\left(
			\frac{3\sqrt{2}}{\sqrt{\tau} \mu_p \| P_{\linVp}(c)\| } + \frac{\sqrt{2} }{\sqrt{\sigma} \mu_d \|q\| \lambda_{\min}}
			\right) \vspace{0.1cm} \\
			& \bar C_2(z^0) := \theta_d(s^0) \|q\| \cdot
			\left(
			\frac{2\sqrt{2}}{\sqrt{\sigma} \mu_d \|q\| \lambda_{\min}} + \frac{\sqrt{2}}{\sqrt{\tau} \mu_p \| P_{\linVp}(c)\| }
			\right) \vspace{0.1cm} \\
			&\bar C_3 (z^0)  :=  C_3 \ ,
		\end{array}
	\end{equation}and notice in the definition $\bar C_1$ we have written $\theta_p(P_{V_p}(x^0))$ since in fact $\tilde x^0 :=P_{V_p}(x^0)$. Now notice that \eqref{ngeq  M-norm distance converge} is nearly identical to \eqref{eq  M-norm distance converge}, except that the constants $\bar C_1(z^0)$ and $\bar C_2(z^0)$ use  $\theta_p(P_{V_p}(x^0))$ instead of $\thetax$, and use $\theta_d(s^0)$ instead of $\thetas$.  
	
	To finish the proof, let $z^{\star} \in \arg\min_{z \in \calZ^\star}\| z - z^0\|_M$ be fixed, and define $z^\lambda := (1-\lambda) z^0 + \lambda z^\star$ for all $\lambda \in [0,1)$. Then \eqref{ngeq  M-norm distance converge} holds for $z^\lambda$, namely:
	\begin{equation}\label{laketime}
		\dist_M(z^\lambda,\calZ^\star) \le  \bar C_1(z^\lambda)  \cdot \dist(x^\lambda,V_p) + \bar C_2(z^\lambda)  \cdot \dist(s^\lambda, \mathbb{R}^n_+) + \bar C_3(z^\lambda)  \cdot \max\{0,\gap(x^\lambda,s^\lambda)\} \ ,
	\end{equation}
	since $z^\lambda$ satisfies the same hypotheses as $z^0$. And since $z^{\star} \in \arg\min_{z \in \calZ^\star}\| z - z^\lambda\|_M$ we can invoke Lemma \ref{lm: straight line mono}.  It follows from Lemma \ref{lm: straight line mono} that for all $t \ge 0$ with $z^\lambda_t:= z^\lambda + t (z^\lambda - z^\star)$ that
	\begin{equation}\label{laketime2}
		\dist_M(z^\lambda_t,\calZ^\star) \le  \bar C_1(z^\lambda)  \cdot \dist(x^\lambda_t,V_p) + \bar C_2(z^\lambda)  \cdot \dist(s^\lambda_t, \mathbb{R}^n_+) + \bar C_3(z^\lambda)  \cdot \max\{0,\gap(x^\lambda_t,s^\lambda_t)\} \ .
	\end{equation}
	Setting $t = \lambda/(1-\lambda)$ yields $z^\lambda_t = z^0$, whereby:
	\begin{equation}\label{laketime3}
		\dist_M(z^0,\calZ^\star) \le  \bar C_1(z^\lambda)  \cdot \dist(x^0,V_p) + \bar C_2(z^\lambda)  \cdot \dist(s^0, \mathbb{R}^n_+) + \bar C_3(z^\lambda)  \cdot \max\{0,\gap(x^0,s^0)\} \ .
	\end{equation} Now let $\lambda \rightarrow 1$, whereby $ z^\lambda \rightarrow z^\star$, and so $\operatorname{\lim\sup}_{\lambda \rightarrow 1} \theta_p(P_{V_p}(x^\lambda)) \le \thetax$ and therefore $\operatorname{\lim\sup}_{\lambda \rightarrow 1} \bar C_1(z^\lambda) \le C_1$. Similarly $\operatorname{\lim\sup}_{\lambda \rightarrow 1} \theta_d(s^\lambda) \le \thetas$ and therefore $\operatorname{\lim\sup}_{\lambda \rightarrow 1} \bar C_2(z^\lambda) \le C_2$.  Thus, we can conclude \eqref{eq M-norm distance converge} from \eqref{laketime3}.
\end{proof}

Finally, we prove Lemma \ref{lm: Mdistance upper bounded by normalized duality gap}.

\begin{proof}[Proof of Lemma \ref{lm: Mdistance upper bounded by normalized duality gap}] The proof is a combination of Lemmas \ref{lm: convergence of PHDG without restart}, \ref{lm: R in the opt gap convnergence}, and \ref{lm: converegnce under M distance}. Setting $s^b = c - A^\top y^b$ it follows from Lemma \ref{lm: convergence of PHDG without restart} that
	\begin{equation}\label{eq lm: Mdistance upper bounded by normalized duality gap 1-3}
		\begin{array}{rl}
			&\dist(x^b,V_p)  \le \frac{1}{\sqrt{\sigma} \lambda_{\min}}\cdot \rho(\|z^b- z^c\|_M;z^b) \ , \vspace{0.1cm} \\ 
			&\dist(s^b,\mathbb{R}^n_+)  \le \frac{1}{\sqrt{\tau}} \cdot \rho(\|z^b - z^c\|_M;z^b) \ , \vspace{0.1cm}  \\
			&\gap(x^b,s^b)  \le   \max\{ \|z^b- z^c\|_M, \|z^b\|_M\} \rho(\|z^b - z^c\|_M;z^b) \ .
		\end{array} 
	\end{equation}
	Also, from Lemma \ref{lm: converegnce under M distance} it follows that $\dist_M(z^b,\calZ^\star)$ can be bounded using the terms in the left-hand side of \eqref{eq lm: Mdistance upper bounded by normalized duality gap 1-3}:
	\begin{equation}\label{eq lm: Mdistance upper bounded by normalized duality gap 4}
		\dist_M(z^b,\calZ^\star) \le  C_1 \cdot \dist(x^b,V_p) + C_2 \cdot \dist(s^b, \mathbb{R}^n_+) + C_3 \cdot \gap(x^b,s^b) \ ,
	\end{equation}
	where $C_1$, $C_2$ and $C_3$ are the scalars defined in \eqref{eq of lm: converegnce under M distance}.
	Substituting \eqref{eq lm: Mdistance upper bounded by normalized duality gap 1-3} into \eqref{eq lm: Mdistance upper bounded by normalized duality gap 4} yields:
	\begin{equation}\label{eq lm: Mdistance upper bounded by normalized duality gap 5}
		\dist_M(z^b,\calZ^\star) \le  \left(
		\frac{C_1}{\sqrt{\sigma}\lambda_{\min}} + \frac{C_2}{\sqrt{\tau}} + C_3 \cdot \max\{ \|z^b - z^c\|_M, \|z^b\|_M\} 
		\right) \rho(\|z^b- z^c\|_M;z^b)  \ .
	\end{equation}
	From Lemma \ref{lm: R in the opt gap convnergence} it holds that	
	$$
	\begin{aligned}
		\max\{ \|z^b - z^c\|_M, \|z^b\|_M\} & \ \le    2 \dist_M(z^{0},\calZ^\star)  + \|z^{0} \|_M	 = 2 \dist_M(0,\calZ^\star)  \\ 
		& \ \le   \frac{2\sqrt{2}}{\sqrt{\tau}} \dist(0,\calX^\star) + \frac{2\sqrt{2}}{\sqrt{\sigma}\lambda_{\min}} \dist(c,\calS^\star) \ ,
	\end{aligned}
	$$
	where the second inequality uses \eqref{eq of lm: M norm to seperable norm}. Substituting this inequality into \eqref{eq lm: Mdistance upper bounded by normalized duality gap 5} yields
	\begin{equation}\label{eq lm: Mdistance upper bounded by normalized duality gap 6}
		\begin{aligned}
			& 	\dist_M(z^b,\calZ^\star)  \le  \left(
			\frac{C_1}{\sqrt{\sigma}\lambda_{\min}} + \frac{C_2}{\sqrt{\tau}} 
			+
			\frac{2  \sqrt{2} C_3}{\sqrt{\tau}} \dist(0,\calX^\star) + \frac{2 \sqrt{2}C_3}{\lambda_{\min}\sqrt{\sigma}} \dist(c,\calS^\star)  			
			\right) \rho(\|z^b- z^c\|_M;z^b) \ .
		\end{aligned}
	\end{equation}
	The proof is completed by substituting the values of $C_1, C_2, C_3$ defined in \eqref{eq of lm: converegnce under M distance} into \eqref{eq lm: Mdistance upper bounded by normalized duality gap 6}, which then yields \eqref{eq of lm: Mdistance upper bounded by normalized duality gap}.
\end{proof}

\section{Properties of the Limiting Error Ratio (\limitinger)}\label{sec error ratio}

In this section we present some relevant properties of the \limitingeratio~(\limitinger). Without loss of generality, we focus primarily on $\thetax$ and similar arguments could also be made for $\thetas$.
Theorem \ref{thm local geometry to feasibility error ratio at optima} in Section \ref{subsec: local geometry to feasibility error ratio at optima} characterizes an upper bound on $\thetax$ that is connected to the notion of a ``nicely interior'' point in a convex set -- which itself is critical to the complexity of separation-oracle methods \cite{fv4}.  Proposition \ref{cusco} in Section \ref{subsec computable theta upperbound} presents a convex optimization problem (actually a conic optimization problem with one second-order cone constraint) whose solution provides an upper bound on $\thetax$, thus showing that computing a bound on $\thetax$ is computationally tractable. Finally, Theorem \ref{thm: general theta with infeasibility} in Section \ref{subsec: theta and distance to infeasibility} shows that the error ratio $\theta_p(x)$ is upper-bounded by a simple quantity involving the data-perturbation condition number $\distinfeas(\cdot)$ of Renegar \cite{renegar1994some}. Proofs of these results are presented in Appendix \ref{subsec: proof of sec error ratio}. 

Our setup once again is the LP problem \eqref{pro: general primal LP} in which the feasible set is $\calF_p$, the intersection of $V_p$ and $\mathbb{R}^n_+$.  We will assume in this subsection that $\calX^\star$ is nonempty, and recall the definition of the \limitingeratio~(\limitinger) $\thetax$ in \eqref{kansas}. Let $\calF_{++}$ denote the strictly feasible solutions of \eqref{pro: general primal LP}, namely $\calF_{++} := V_p\cap \mathbb{R}^n_{++}$.  We do not necessarily assume that $\calF_{++} \ne \emptyset$.

\subsection{An upper bound based on ``nicely interior'' feasible solutions}\label{subsec: local geometry to feasibility error ratio at optima}

The following theorem presents an upper bound on the \limitingeratio~$\thetax$  using the existence of a ``nicely interior'' point in $\calF_{++}$. (In the theorem we use the convention that the infimum over an empty set is $+\infty$.)

\begin{theorem}\label{thm local geometry to feasibility error ratio at optima}
	For the LP problem \eqref{pro: general primal LP}, suppose that the optimal solution set $\calX^\star$ is nonempty and bounded.  Then 
	\begin{equation}\label{eq local geometry to feasibility error ratio at optima}
		\thetax \le \sup_{x^\star\in \calX^\star} \inf_{x_{\mathrm{int}} \in \calF_{++}}  \frac{\|x^\star - x_{\mathrm{int}}\|}{\min_i (x_{\mathrm{int}})_i} \ .
	\end{equation}
\end{theorem}

This theorem states that if every optimal solution $x^\star$ has a nicely interior point near to it -- in the sense that there exists $x_{\mathrm{int}} \in \calF_{++}$ that is simultaneously close to $x^\star$ and far from the boundary of the nonnegative orthant $\mathbb{R}^n_+$, then the \limitinger~value $\thetax$ will not be excessively large. In the case when $\calX^\star$ is a singleton, then \eqref{eq local geometry to feasibility error ratio at optima} simplifies to finding a single nicely interior point that balances the distance from the optimal solution (in the numerator above) with the distance to the boundary of $\mathbb{R}^n_+$ (in the denominator above). (Note that the concept of a nicely interior point is quite similar to that of a ``reliable solution'' in \cite{EpeFre00}, see also \cite{fv1} for connections to Renegar's data-perturbation condition number $\distinfeas(\cdot)$ \cite{renegar1994some}.) 

The same argument also holds for $\thetas$. We prove Theorem \ref{thm local geometry to feasibility error ratio at optima} in Appendix \ref{subsec: proof of sec error ratio} for a generic form of LP, which encompasses both the primal and dual problems.

\subsection{A computable upper bound for the \limitingeratio}\label{subsec computable theta upperbound}

Here we show how, in principle, we can use the upper bound in Theorem \ref{thm local geometry to feasibility error ratio at optima} to construct a computationally tractable convex optimization problem that computes an upper bound on the \limitinger~$\thetax$.  

We first suppose that we can compute, without too much extra computational effort, a ball that contains the optimal solution set $\calX^\star$.  That is, we suppose we can compute a point $x_a \in \calX^\star$ and a radius value $R_a$ such that $\calX^\star \subset B(x_a, R_a)$, so that every optimal solution is within a distance $R_a$ from the optimal solution $x_a$.  If $\calX^\star$ is a singleton, then $R_a = 0$ trivially.  If $\calX^\star$ is not a singleton, then one choice of $x_a$ is the analytic center of $\calX^\star$ (see Sonnevend \cite{son1}, also \cite{nesterov1994interior}), from which one can then easily construct a bounding ellipsoid $\calE^{\mathrm{out}}$ that contains $\calX^\star$ and then examine the eigenstructure $\calE^{\mathrm{out}}$ to compute a suitable value of $R_a$.

\begin{proposition}\label{cusco} Suppose $x_a \in \calX^\star$ and there exists $R_a$ for which $\calX^\star \subset \{x: \|x-x_a \| \le R_a\}$, then it holds that $ \thetax \le G^\star$ for $G^\star$ defined as follows:
	\begin{equation}\label{rrr}  
			G^\star := \ \inf_{r >0, \ x\in \mathbb{R}^n}  \displaystyle\frac{R_a+\|x-x_a\|}{r} \quad \operatorname{s.t.} \ x \in V_p , \ x \ge r \cdot e \ . 
	\end{equation} Furthermore, since $V_p  = \{ \hat{x} \in \mathbb{R}^n : A\hat{x} = b\}$, then 
	\begin{equation}\label{rrrr}  
			G^\star := \ \min_{v\in \mathbb{R}^n, \ \alpha \in \mathbb{R}}  R_a \alpha+\|v- \alpha x_a \| \quad \operatorname{s.t.} \ Av = \alpha  b , \ v \ge e , \ \alpha \ge 0 \ . 
	\end{equation}
\end{proposition}

Proposition \ref{texas primal} in Section \ref{subsec: comments} is just a restatement of \eqref{rrr}.
The formulation of the upper bound in \eqref{rrrr} is a convex optimization problem of essentially the same size as that of the original LP problem \eqref{pro: general primal LP}, and its only non-linear component is the norm term $\|v- x_a \alpha\| $ in the objective function.  This can easily be handled by a single second-order cone constraint, or can be upper bounded by an $\ell_1$ or $\ell_\infty$ norm which then can be converted to a pure LP problem.  

We prove Proposition \ref{cusco} in Appendix \ref{subsec: proof of sec error ratio} for a generic form of LP, which encompasses both the primal and dual problems. 
As for computing the upper bound for $\thetas$, given $s_a$ and $R_a$ such that $s_a \in \calS^\star$ and $\calS^\star \subset B(s_a, R_a)$, the upper bound $G^\star$ in \eqref{rrr} could be computed by solving the optimization problem:
	\begin{equation}\label{rrrrr}  
			\ \min_{v\in \mathbb{R}^n, \ y\in\mathbb{R}^m , \ \alpha \in \mathbb{R}}  R_a \alpha+\|v- \alpha s_a \| \quad \operatorname{s.t.} \ A^\top y + v = \alpha c , \ v \ge e , \ \alpha \ge 0 \ . 
	\end{equation}

\subsection{Relationship between the \limitinger~and the distance to infeasibility}\label{subsec: theta and distance to infeasibility}

We have established the relationship between $\thetax$ and the geometric properties of the feasible sets. 
In this subsection, we demonstrate that this relationship also extends to the distance of the data from infeasibility. The concept of distance to infeasibility was initially utilized to assess the complexity of LP  \cite{renegar1994some}.
Previous research, such as \cite{luo1994perturbation,hu2000perturbation}, has also studied the connections between global upper bounds of error bounds and the existence of perturbations. Here we show it also holds for the \er~$\theta_p(x)$ and the \limitinger~$\thetax$. 
We primarily focus on $\thetax$ for the primal problem for clarity, but similar arguments also hold for $\thetas$. In the appendix we will prove them for a generic form of LP, which encompasses both the primal and dual problems.                

Note that $V_p$ is given by $\{\hat{x} \in \mathbb{R}^n: A \hat{x} = b\}$ for an $m\times n$ real matrix $A$ and a vector $b$ in $\mathbb{R}^m$. Let $\soln(A, b)$ denote the feasible set corresponding to $(A, b)$, namely $\soln(A, b) := \{\hat{x}\in \mathbb{R}^n : A \hat{x} = b, x \ge 0\}$.  We suppose that $\soln(A, b) \ne \emptyset$, in which case the ``distance to infeasibility'' of the data $(A,b)$ is defined as follows:
$$ \distinfeas(A,b) := \inf\left\{\|\Delta A\| + \| \Delta b\| : \soln(A + \Delta A, b + \Delta b)  = \emptyset  \right\} \ ,  
$$ see \cite{renegar1994some}.
Now we have the following general theorem about the relationship between $\theta_p(x)$ and the distance to infeasibility.

\begin{theorem}\label{thm: general theta with infeasibility}
	Suppose that $\calF_p$ is nonempty for the LP \eqref{pro: general primal LP}. Then for every $x\in V_p \setminus \calF_p$, it holds that
	\begin{equation}\label{eq thm general theta with infeasibility}
		\theta_p(x)  \le \frac{\|A\|(1 + \|x\|)}{\distinfeas(A,b)} \ .
	\end{equation}
\end{theorem}

This theorem shows that the larger the distance to infeasibility is (namely, the larger the least data perturbation to infeasibility), the smaller the error ratio $\theta_p(x)$ must be. The inequality \eqref{eq thm general theta with infeasibility} looks similar in spirit to Theorem 1.1 part (1) of Renegar \cite{renegar1994some}, even though the setup and context are structurally different from that considered here.  
From Theorem \ref{thm: general theta with infeasibility} we also have the following relationship between $\thetax$ and the distance to infeasibility.

\begin{corollary}\label{cor: theta with infeasibility}
	Suppose that the LP \eqref{pro: general primal LP} has an optimal solution.  If $\distinfeas(A,b) > 0$, then it holds that
	\begin{equation}\label{eq thm theta with infeasibility}
		\thetax \le  \frac{\|A\|(1 + \max_{x\in \calX^\star}\|x\|)}{\distinfeas(A,b)} \ .
	\end{equation}
\end{corollary}

Both Theorem \ref{thm: general theta with infeasibility} and Corollary  \ref{cor: theta with infeasibility} imply that the farther the feasible set $\calF_p$ is from infeasibility~(namely, the larger $\distinfeas(A,b)$ is), then the smaller $\theta_p(x)$ and $\thetax$ must be. It should be noted that these inequalities do not hold oppositely, because $\theta_p(x)$ and $\thetax$ are only determined by the geometry of the feasible set, while the $\distinfeas(A,b)$ is affected by the data as well. For example, simultaneously rescaling a row of $A$ and the corresponding entry of $b$ by a small factor could decrease the value of $\distinfeas(A,b)$ while keeping $\|A\|$ roughly unchanged. This would significantly increase the right-hand sides of \eqref{eq thm general theta with infeasibility} and \eqref{eq thm theta with infeasibility}, but the left-hand sides are unchanged because it does not affect the geometry of the feasible set.

The main idea of the proof of Theorem \ref{thm: general theta with infeasibility} is to construct, for each $x\in V_p \setminus \calF_p$, a suitable perturbation $(\Delta A,\Delta b)$ of $(A,b)$ for which $\distinfeas(A + \Delta A, b + \Delta b)  = 0$ and $\theta_p(x) \le \frac{\|A\| ( 1 + \|x\|)}{\|\Delta A \| + \|\Delta b\|}$.

\section{LP Sharpness and stability under perturbation}\label{sec sharpness}
In this section we present a characterization of the LP sharpness in terms of the least relative perturbation of the objective function vector that yields a different optimal solution set that is nonempty and not a subset of the original solution set.  We also present a characterization of the LP sharpness via polyhedral geometry, with implications for computing methods.
We still primarily focus on $\mu_p$ for the primal problem \eqref{pro: general primal LP} while similar results also hold for $\mu_d$ because of the symmetric reformulation \eqref{pro: primal dual reformulated LP}.

\subsection{Characterization of LP sharpness and the stability of optimal solutions under perturbation}

Let $\opt(c,\calF_p)$ denote the set of optimal solutions of the  LP problem \eqref{pro: general primal LP}, namely
$$
\opt(c,\calF_p) := \arg\min_{x\in\calF_p} \ c^\top x \ .
$$
Let $C_{\calX^\star}$ denote the recession cone of the optimal solution set $\calX^\star$, and let $C_{\calX^\star}^*$ denote its dual cone, namely
$C_{\calX^\star}^* = \{w : w^\top d \ge 0 \ \text{for all}\ d \in C_{\calX^\star} \}$.
Then for any $\Delta c \in \linVp^\bot$, $\opt(c,\calF_p) = \opt(c + \Delta c,\calF_p)$.
Additionally, if $\Delta c \notin C_{\calX^\star}^*$, then $\opt(c + \Delta c,\calF_p) = \emptyset$ (indeed, there exists $d \in C_{\calX^\star}$ with $\Delta c^\top d < 0$; moreover $c^\top d=0$ for all $d\in C_{\calX^\star}$, and therefore $(c+\Delta c)^\top(x^\star+t d)\to -\infty$ as $t \to \infty$ and the resulting LP instance has unbounded objective value).

Intuition suggests that the LP sharpness value $\mu_p$ should be related to objective function perturbations that alter the set of optimal solutions.  Indeed, we have the following theorem that characterizes this relationship completely, namely $\mu_p$ is the smallest relative perturbation $\Delta c$ of $c$ that yields a different optimal solution set that is nonempty and not a subset of the original optimal solution set.  More precisely, we have:
\begin{theorem}\label{thm: sharpness and perturbation} Consider the general LP problem \eqref{pro: general primal LP} under Assumption \ref{assump: general LP}, and let $\mu_p$ be the \LPsharp~of \eqref{pro: general primal LP}. Then
	\begin{equation}\label{eq of thm: sharpness and perturbation}
		\mu_p = \inf_{\Delta c}\left\{
		\frac{\|P_{\linVp}(\Delta c)\|}{\|P_{\linVp}(c)\|}:
		\opt(c + \Delta c,\calF_p) \ne \emptyset \ \text{ and }  \opt(c + \Delta c,\calF_p) \not\subset \opt(c,\calF_p)
		\right\} \ .
	\end{equation}
\end{theorem}
Note that under Assumption \ref{assump: general LP} we must have $\|P_{\linVp}(c)\| > 0$ as otherwise all feasible solutions would be optimal (which violates Assumption \ref{assump: general LP}).  The proof of Theorem \ref{thm: sharpness and perturbation} in its generic form (which encompasses both the primal and dual problems) is in Appendix \ref{app: proof of sharpness section}.

\subsection{Polyhedral geometry characterization of \LPsharp}\label{poker2}

In this subsection we present a polyhedral characterization of the \LPsharp, which leads to an explicit formula for the \LPsharp.
Before we go into details, we first convey our general results as follows, using the 
the primal problem \eqref{pro: general primal LP} as an example. If the optimal solution set is a singleton, namely $\calX^\star = \{x^\star\}$, then we will show that the LP sharpness $\mu_p$ is the smallest sharpness along all of the edges of $\calF_p$ emanating from $x^\star$.  One implication of this result is that if the LP instance is primal and dual nondegenerate, then the dual nondegeneracy implies that $\calX^\star$ is a singleton, and the primal nondegeneracy implies that there are exactly $n-m$ edges emanating from $\calX^\star$, and hence it will be very easy to compute the LP sharpness.  The more general result that we will show is that LP sharpness is the smallest sharpness along all edges of $\calF_p$ that intersect $\calX^\star$ but are not subsets of $\calX^\star$.  In the absence of nondegeneracy there can be exponentially many such edges, and so computing the LP sharpness for either a primal or dual degenerate instance is not a tractable problem in general.  

We now develop these results more formally. For any $x \in \calF_p \setminus \calX^\star$,  we define the sharpness of the point $x$ to be:
$$G(x):=\frac{\dist(x,V_p\cap H_p^\star)}{\dist(x, \, \calX^\star)} = \frac{\frac{c^\top (x - x^*) }{ \|P_{\linVp}(c)\|}}{\|x - x^\star \|} $$
where $x^\star := \arg\min_{v^\star \in \calX^\star} \|v^\star - x\|$ is the projection of $x$ onto $\calX^\star$, and $H_p^\star := \{\hat{x} \in \mathbb{R}^n:c^\top \hat{x}  = c^\top x^\star\}$ is the optimal objective hyperplane.  With this notation the LP sharpness \eqref{topsyturvy} is $\mu_p = \inf_{x \in \calF_p \setminus \calX^\star} G(x)$.  
Next let us recall some notation about convex polyhedra, see \cite{grunbaum}.  An edge of a polyhedron is a $1$-dimensional face of the polyhedron.  And since $\calF_p$ is a polyhedron it follows that $\calF_p$ will have a finite number of edges. Furthermore, every edge will either be (i) a line segment joining two different vertices $v^1 \ne v^2$ of $\calF_p$ which we denote by $\e=[v^1,v^2]$, or (ii) a half-line of points $v + \theta r $ for all $\theta \ge 0$, where $v$ is a vertex of $\calF_p$ and $r$ is an extreme ray of $\calF_p$, and which we denote by $\f = [v;r]$. We will be concerned with the subset of edges of $\calF_p$ which have one endpoint in $\calX^\star$ but are not subsets of $\calX^\star$, which we call edges emanating away from $\calX^\star$ and which we denote as $\calM$, and whose formal definition is:
$$\begin{array}{rll} \calM := \calM_1 \cup \calM_2  \ \ \ \ \text{      where     } & \calM_1 := \{\e=[v^1, v^2] : \e \text{ is an edge of } \calF_p, \ v^1 \in \calX^\star, v^2 \notin \calX^\star \} \\ \text{     and     } & \calM_2 := \{ \f=[v,r] : \f \text{ is an edge of } \calF_p, \ v \in \calX^\star, c^\top r =1 \} \ . \end{array} $$
The theorem below shows that the \LPsharp~can be characterized using the following two functions:
$$
	\begin{aligned}
	& R_1(\e):= G(\tilde x) = \frac{\dist(\tilde{x},V_p\cap H_p^\star)}{\dist(\tilde{x},\, \calX^\star)} \text{ for all edges }\e = [x^\star,\tilde{x}] \in \calM_1 \, \text{, and} \\
	& R_2(\f;\bar{\eps}):=G(x^\star + \bar{\eps}\cdot r) =\frac{\dist(x^\star + \bar{\eps}\cdot r,V_p\cap H_p^\star)}{\dist(x^\star + \bar{\eps}\cdot r,\, \calX^\star)} \text{ for all edges } \f = [x^\star;r] \in \calM_2 \text{ and all }\bar{\eps}>0 \ .
	\end{aligned}
$$
Because $\calM$ is a finite set, we can write $\calM = \{\e^i:i=1,2,\dots, m_1\} \cup \{\f^j:j=1,2,\dots,m_2\}$ for some integers $m_1, m_2$.
\begin{theorem}\label{thm: sharpness from adjacent edges} For any given $\bar{\eps} > 0$, the \LPsharp~is characterized as follows:
	\begin{equation}\label{eq thm: sharpness from adjacent edges 2}
		\mu_p= \min \left\{ R_1(\e^1), R_1(\e^2),\dots,R_1(\e^{m_1}), R_2(\f^1;\bar{\eps}),R_2(\f^2;\bar{\eps}),\dots,R_2(\f^{m_2};\bar{\eps}) \right\} \  .
	\end{equation} 
\end{theorem}

Theorem~\ref{thm: sharpness from adjacent edges} provides a purely polyhedral characterization of $\mu_p$ via edges of $\calF_p$ that emanate from $\calX^\star$. In the special case of a unique optimal basis, Theorem~\ref{thm: sharpness from adjacent edges} leads to a closed-form expression for $\mu_p$ in terms of the optimal basis and optimal dual slack variable values of the simplex tableau at the optimal basis, see Lemma 5.2 of \cite{xiong2024accessible} for details. For general LP instances, depending on the number of optimal bases, the computation of $\mu_p$ may be straightforward or exponentially expensive in the worst case.

The proof of Theorem \ref{thm: sharpness from adjacent edges} is in Appendix \ref{app: proof of sharpness section}. 
 It is noted that the $R_i{(\e^i)}$ and $R_2(\f;\bar{\eps})$ in the right-hand side of \eqref{eq thm: sharpness from adjacent edges 2} are both purely geometric quantities. Therefore, the characterization of $\mu_p$ in Theorem \ref{thm: sharpness from adjacent edges} and its implications for computing $\mu_p$  also hold for $\mu_d$ in a symmetric way on the dual problem.

\section{Numerical Experiments}\label{sec experiments}

Here we present results of numerical experiments designed to test how consistent our theoretical bounds are with computational practice, as well as to demonstrate the value of various heuristics on practical computation, based on our theoretical results.  All computation was conducted on the MIT Engaging Cluster, and each experiment used a 2.4 GHz 14 Core CPU and 32G RAM, with CentOS version 7. All experiments were implemented in Julia 1.8.5. 

\subsection{Simple validation experiments}\label{subsec:five simple experiments}

We conducted five simple experiments to test the extent to which the iteration bounds in Theorem \ref{thm overall complexity} are ``valid,'' by which we mean that the bounds are directionally consistent with computational practice on a specifically chosen family of test problems.  \medskip

\noindent {\bf Experiment 1: Sensitivity to the Hoffman constant of the KKT system.} The bounds in Theorem \ref{thm overall complexity} are based essentially on three condition measures: LP sharpness, \limitinger, and (relative) distance to optima.  This is in contrast to the analysis of \cite{applegate2023faster} whose bounds are mostly based on the Hoffman constant of the KKT system of the LP instance.  To test the sensitivity of Algorithm \ref{alg: PDHG with restarts} to the Hoffman constant of the KKT system, we created the following family of LP instances in standard form \eqref{pro: general primal LP} with $m=1$, $n=3$, and data $(A^1, b^1, c^1)$ parameterized by $\gamma \in (0,1]$ as follows: $A^{1}_{\gamma} := \left[\frac{\sin(\gamma)}{\sqrt{2}}   , \cos(\gamma), \frac{\sin(\gamma)}{\sqrt{2}}  \right]$, $b^{1} := 1 $, 	$c^{1}_{\gamma} := \left[\frac{\cos(\gamma)}{\sqrt{2}}  , -\sin(\gamma), \frac{ \cos(\gamma)}{\sqrt{2}}  \right]^\top $.
This family of problems was designed to have the following properties: $\|c\| =1$, $Ac = 0$, $\|q\|=1$, with uniform values of LP sharpness values and \limitinger~for both the primal and dual problems, but with increasing values of the Hoffman constant of the KKT system as $\gamma \searrow 0$.  

The first three columns of the first row of Figure \ref{fig good classes} show the LP sharpness values (computed using the methodology in Section \ref{poker2}), \limitinger~values (computed using the upper bound methodology in Section \ref{subsec computable theta upperbound}), and relative distances to optima for this simple family of problems, which are all constant over the range of $\gamma \in (0,1]$. In the fourth column we report the actual iterations of Algorithm \ref{alg: PDHG with restarts} (to obtain a solution whose Euclidean distance to the optimum is at most $10^{-10}$), and the iteration bound of Theorem \ref{thm overall complexity} as well as the iteration bound \eqref{intro:complexity} based on  \cite{applegate2023faster}. (Since these two bounds are based on linear convergence rates, we report the constant outside
of the logarithmic term for simplicity, and we computed the Hoffman constant for the KKT system using the algorithm and code from \cite{pena2018algorithm}.) Notice that the bound \eqref{intro:complexity} from  \cite{applegate2023faster} grows exponentially in $\ln(1/\gamma)$ while the actual number of iterations and the bound of Theorem \ref{thm overall complexity} are constant over $\gamma \in (0,1]$.  This simple example validates the absence of the Hoffman constant from the bound in Theorem \ref{thm overall complexity}, and shows for this simple family that the actual number of iterations of Algorithm \ref{alg: PDHG with restarts} is constant as suggested by Theorem \ref{thm overall complexity}. \medskip

\begin{figure}[htbp]
	\centering
	\begin{subfigure}[b]{\textwidth}
		\centering
		\includegraphics[width=1\linewidth]{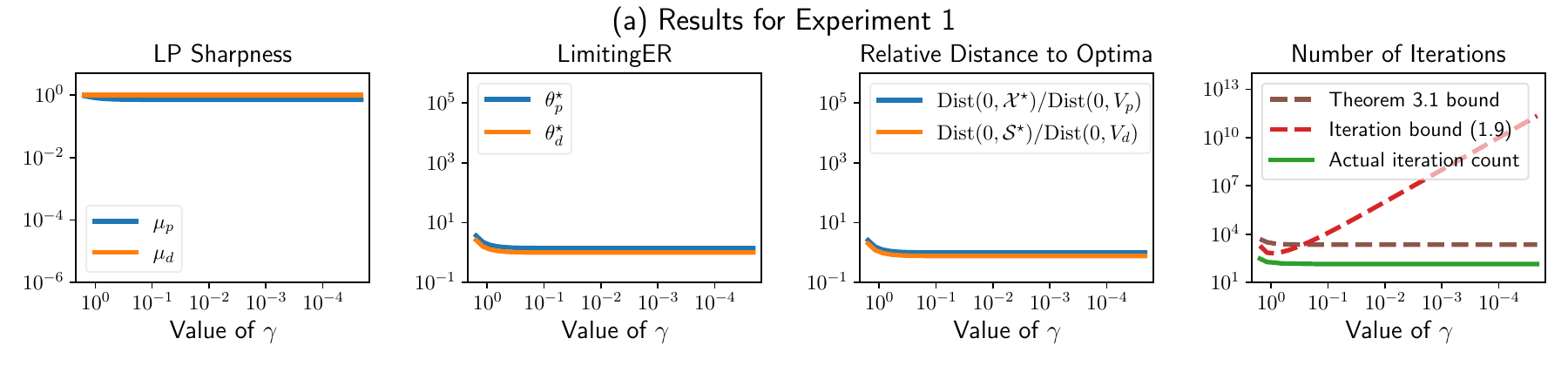}
	\end{subfigure}
	\begin{subfigure}[b]{\textwidth}	
		\centering
		\includegraphics[width=1\linewidth]{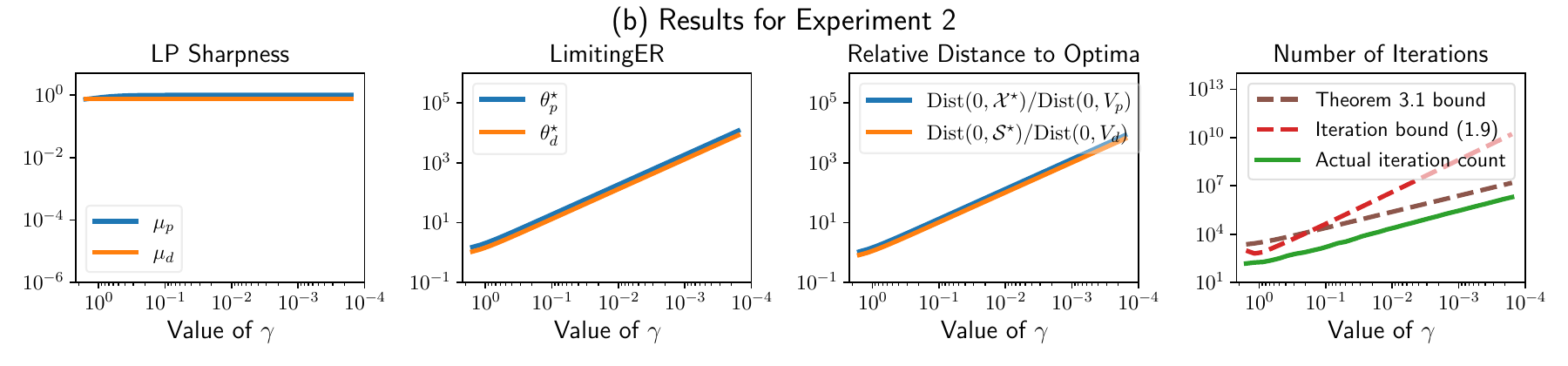}
	\end{subfigure}
	\begin{subfigure}[b]{\textwidth}
		\centering
		\includegraphics[width=1\linewidth]{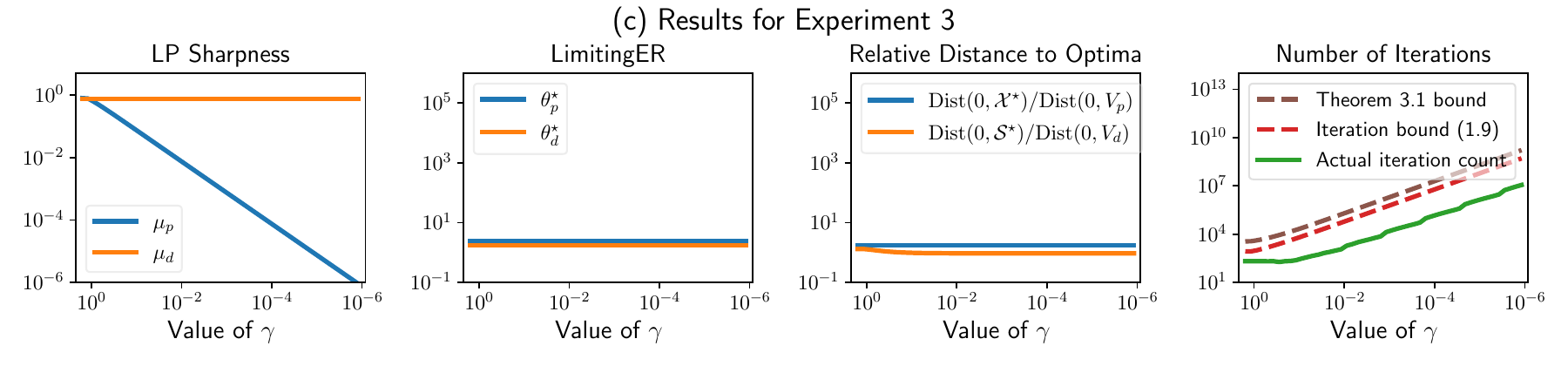}
	\end{subfigure}
	\begin{subfigure}[b]{\textwidth}
		\centering
		\includegraphics[width=1\linewidth]{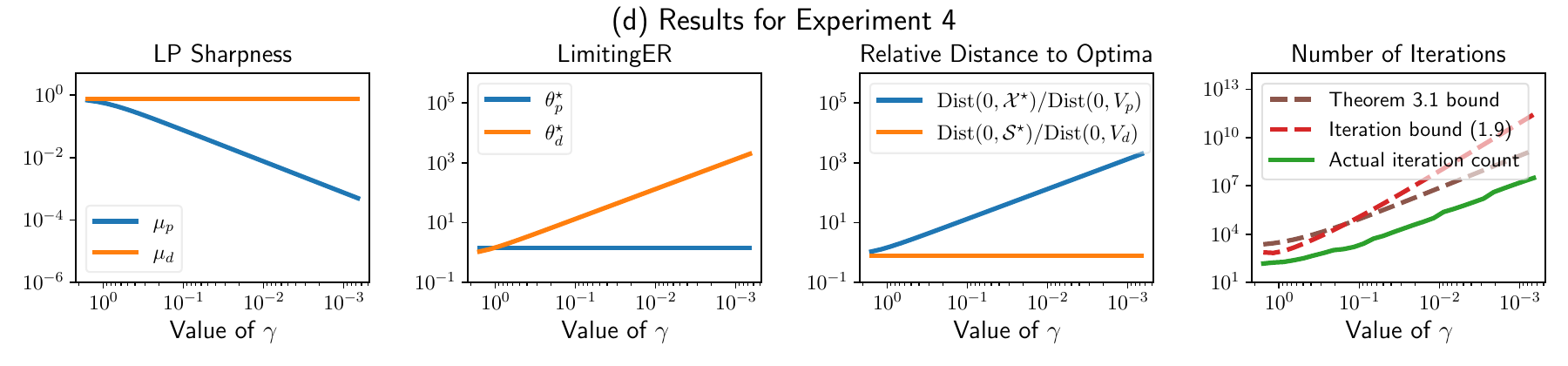} 
	\end{subfigure}
	\caption{Values of LP sharpness, \limitinger, relative distance to optima, theoretical iteration upper bound of Theorem \ref{thm overall complexity}, actual iteration count, and the iteration bound \eqref{intro:complexity} based on \cite{applegate2023faster} for the four simple validation experiments. } 
	\label{fig good classes}
\end{figure}

\noindent {\bf Experiment 2: Sensitivity to \limitinger.} This simple experiment is designed to test the sensitivity of Algorithm \ref{alg: PDHG with restarts} to the \limitinger.  Similar in approach to Experiment 1, we created the family: $A^{2}_{\gamma} := \left[\frac{\cos(\gamma)}{\sqrt{2}}  , \sin(\gamma), \frac{ \cos(\gamma)}{\sqrt{2}} \right]$, $b^2 = 1$, $c^{2}_{\gamma} := \left[\frac{\sin(\gamma)}{\sqrt{2}}  , -\cos(\gamma), \frac{\sin(\gamma)}{\sqrt{2}} \right]^\top$, where again $\|c\| =1$, $Ac = 0$, $\|q\|=1$, with constant values of LP sharpness for $\gamma \in (0,1]$, but now the \limitinger~value increases as $\gamma \searrow 0$.  The second row of Figure \ref{fig good classes} shows our results.  In this family of instances the LP sharpness values are constant even as the \limitinger~grows.  Notice that the relative distance to optima also grows similarly to the \limitinger; this must occur since the relative distances to optima are lower-bounded by the \limitinger~, see \cite{ZX2023-z}.  The fourth column shows that for the smaller values of $\gamma$, the bound in Theorem \ref{thm overall complexity} follows a similar pattern -- including the slope in the log-log plot -- as the actual iterations of Algorithm \ref{alg: PDHG with restarts}.\medskip

\noindent {\bf Experiment 3: Sensitivity to LP Sharpness.} This simple experiment is designed to test the sensitivity of rPDHG to LP sharpness.  We created the family: $A^{3}_{\gamma} := \left[\frac{1}{\sqrt{3}}, \frac{1}{\sqrt{3}}, \frac{1}{\sqrt{3}}\right]$, $b^3=1$, $c^{3}_{\gamma} :=   \cos(\gamma) \cdot \left[\frac{-1}{\sqrt{6}}, \frac{-1}{\sqrt{6}}, \frac{2}{\sqrt{6}}\right]^\top + \sin(\gamma) \cdot \left[\frac{-1}{\sqrt{2}}, \frac{1}{\sqrt{2}}, 0\right]^\top$.  Similar to the previous experiments we have $\|c\| =1$, $Ac = 0$, $\|q\|=1$, with constant values of the \limitinger~and the relative distances to optima for $\gamma \in (0,1]$, but now the primal LP sharpness $\mu_p$ value decreases as $\gamma \searrow 0$.  The third row of Figure \ref{fig good classes} shows our results. Similar in spirit to Experiment 2, the fourth column shows that for the smaller values of $\gamma$ that the bound in Theorem \ref{thm overall complexity} follows a similar pattern -- including the slope of the log-log plot -- as the actual iterations of Algorithm \ref{alg: PDHG with restarts}.\medskip

\noindent {\bf Experiment 4: Sensitivity to simultaneous changes in LP sharpness and \limitinger.} We created the family: $A^{4}_{\gamma} := \left[\sin(\gamma), \frac{\cos(\gamma)}{\sqrt{2}}, -\frac{\cos(\gamma)}{\sqrt{2}}\right]$, $b^4 =1$, $c^{4}_{\gamma} :=   \left[0, \frac{1}{\sqrt{2}}, \frac{1}{\sqrt{2}}\right]^\top$, in which the primal LP sharpness $\mu_p$ decreases and the dual \limitinger~$\thetas$ increases as $\gamma \searrow 0$, see the fourth row of Figure \ref{fig good classes} for the computational values.  Examining the fourth column of this row, we see the multiplicative effect of these two condition measures both on the theoretical bounds of Theorem \ref{thm overall complexity} as well as a doubling of the slope of the log-log plot of actual iteration counts, which aligns well with the theoretical results.\medskip

\noindent{\bf Experiment 5: Effect of the step-size rule based on LP sharpness.} Theorem \ref{thm special step size complexity} presented a step-size rule \eqref{eq smart step size 1} based on knowledge of the LP sharpness measures $\mu_p$ and $\mu_d$ that leads to a structurally superior complexity bound for Algorithm \ref{alg: PDHG with restarts}.  (However, this rule is impractical since the LP sharpness measures are neither known nor easily computable in practice.) In this experiment we test the utility of this rule using the simple family of LP instances ($A^{4}_{\gamma}$, $b^4 $, $c^{4}_{\gamma}$) described in Experiment 4, where for this simple family we know the LP sharpness measures.  Figure \ref{fig step size class 4} shows the theoretical upper bounds and the actual iteration numbers of Algorithm \ref{alg: PDHG with restarts} with standard step-sizes (Theorem \ref{thm overall complexity}) and the step-sizes of Theorem \ref{thm special step size complexity} for this family of LP instances.  The figure shows that this step-size rule reduces the actual number of iterations in line with the theory.
\medskip

\begin{figure}[htbp]
	\centering
	\includegraphics[width=0.55\linewidth]{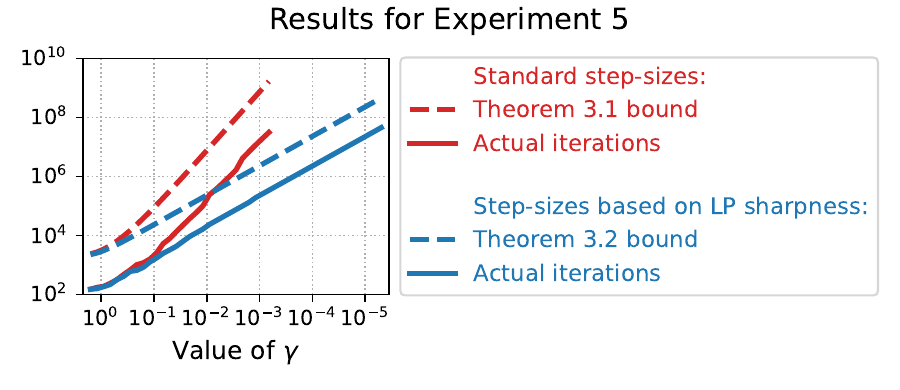}\vspace{-10pt}
	\caption{Theoretical upper bounds and the actual iteration numbers of Algorithm \ref{alg: PDHG with restarts} with standard step-sizes (Theorem \ref{thm overall complexity}), and theoretical upper bounds and the actual iteration numbers of Algorithm \ref{alg: PDHG with restarts} with the LP sharpness-based step-sizes of Theorem \ref{thm special step size complexity}, for the family of LP instances described by ($A^{4}_{\gamma}$, $b^4 $, $c^{4}_{\gamma}$) . }	\label{fig step size class 4}
\end{figure}

\noindent
\textbf{Experiment of Figure \ref{fig intro}.}
Finally, we conducted an additional experiment on the family of LP instances given in \eqref{pro sample 2dim LP}, and the results are shown in Figure \ref{fig intro} in Section \ref{sec:intro}. In this experiment, the iteration bound from Theorem \ref{thm overall complexity}, the Hoffman constant, and the iteration bound \eqref{intro:complexity} were all computed using the same approach as in the five experiments described above.

\subsection{Computational evaluation of two theory-based heuristics on the MIPLIB 2017 dataset}
In this subsection we introduce two heuristics that are inspired by our theoretical guarantees, and are designed to improve the practical performance of Algorithm \ref{alg: PDHG with restarts}. The first heuristic involves the choice of step-sizes $\tau$ and $\sigma$ for Algorithm \ref{alg: PDHG with restarts}. It was observed in \cite{applegate2023faster} that even while keeping the product of the primal and dual step-sizes $\tau$ and $\sigma$ constant, heuristically modifying the ratio $\tau/\sigma$ had the potential to improve the computational performance of rPDHG. Theoretical justification for that observation can be seen in the computational bounds for Algorithm \ref{alg: PDHG with restarts} in Theorem \ref{thm overall complexity} using different step-sizes $\tau$ for the primal and $\sigma$ for the dual in \eqref{eq smart step size 1}, and Theorem \ref{thm special step size complexity} shows -- at least in concept -- how the complexity bound can be structurally improved by appropriately varying the ratio $\tau/\sigma$ while keeping the product constant, namely $\tau \sigma = 1/(4\lambda_{\max}^2)$.  In the spirit of ``learning from experience,'' our first heuristic is essentially an adaptation of the methodology in  \cite{applegate2023faster} to learn a reasonably good step-size ratio, and works as follows.  We consider five possible choices of step-sizes, namely $(\tau,\sigma) = (40^\ell/2\lambda_{\max},40^{-\ell}/2\lambda_{\max})$ for $\ell = -1, -\tfrac{1}{2}, 0, \tfrac{1}{2},1$.  For each of these step-size pairs we run Algorithm \ref{alg: PDHG with restarts} for $5,000$ iterations from the same initial point $(x^{0,0},y^{0,0}) = (0,0)$, and then choose which of the five step-sizes to use based on the smallest relative error $\calE_r (x,y) := \frac{\|Ax^+ - b\|}{1+ \|b\|} + \frac{\|(c-A^\top y)^-\|}{1+\|c\|} + \frac{|c^\top x^+ - b^\top y|}{1 +|c^\top x^+ |+| b^\top y| }$. The heuristic essentially spends $20,000$ iterations exploring/testing for a better step-size ratio.  (We note that the relative error $\calE_r (x,y) $ is upper-bounded by the distance to optima $\calE_d(x,s)$ \eqref{xmas}, see Remark \ref{stargaze}.)

The second heuristic is also motivated by the computational bound in Theorem \ref{thm overall complexity} where we observe in \eqref{eq overall complexity} that the bound grows at least linearly in the condition number $\kappa$ of the matrix $A$ (recall the definition of $\kappa$ in \eqref{eq  def lamdab min max}). The heuristic is to compute and apply a (full-rank) row-pre-conditioner $D\in\mathbb{R}^{m\times m}$ to the equality constraints $Ax=b$ to yield the equivalent system $D Ax = D b$ for which the condition number $\kappa' := \kappa(DA):= \frac{\lambda_{\max}^+(DA)}{\lambda_{\min}^+(DA)} $ is reduced. Notice that for any such $D$, the preconditioned LP instance
\begin{equation}\label{pro: precodnitioned primal LP}
 		\min_{x\in\mathbb{R}^n}  \ c^\top x \quad	\text{s.t.}  \ D Ax = D b, \  x \ge 0
\end{equation}
and its dual problem have the identical duality-paired symmetric format \eqref{pro: primal dual reformulated LP} as the original LP instance; and so the \LPsharp, the \limitinger, and the relative distance to optima are unchanged by the preconditioner.  Indeed the only quantity in the iteration bound \eqref{eq overall complexity} that is changed is the matrix condition number $\kappa(DA)$.  In our second heuristic we work with the ``complete'' pre-conditioner $D := (AA^\top)^{-1/2}$, for which $\kappa' = \kappa(DA) =1$, which requires one (potentially expensive) matrix factorization. Other first-order methods for LP, such as \cite{lin2021admm,o2016conic}, also compute and use a single matrix factorization throughout all iterations. (When the problem is very large and computing even one matrix factorization is not tractable, \cite{applegate2021practical} proposed to use a diagonal preconditioner $D$, but there were no theoretical guarantees.)

We tested the usefulness of the two heuristics using the LP relaxations of the MIPLIB 2017 dataset~\cite{gleixner2021miplib}, which is a collection of mixed-integer programs from real applications. We took the LP relaxations of the problems in the dataset and converted them to standard form so that Algorithm \ref{alg: PDHG with restarts} can be directly applied.  We ran Algorithm \ref{alg: PDHG with restarts} to compare the following choice of heuristic strategies for step-sizes and preconditioners:
\begin{itemize}[nosep]
	\item \textbf{Simple Step-size}:  this is the simple step-size rule originally used in the proofs in \cite{applegate2023faster}, namely $\tau = \sigma = 1/(2\lambda_{\max})$,	
	\item \textbf{Learned Step-size}:  use an extra $20,000$ iterations to heuristically learn the best of five step-sizes as described above,
	\item \textbf{Preconditioner}: apply the preconditioner $D = (AA^\top)^{-1/2}$ as described above, and  
	\item \textbf{Learned Step-size+Preconditioner}: apply both of the above heuristics.
	\end{itemize}\medskip

Figure \ref{fig LP real instances} illustrates the individual effects of the two heuristics on three representative problems, namely \texttt{nu120-pr9}, \texttt{n2seq36f} and \texttt{n3705}. The horizontal axis is the number of iterations and the vertical axis is the relative error $\calE_r (x,y) := \frac{\|Ax^+ - b\|}{1+ \|b\|} + \frac{\|(c-A^\top y)^-\|}{1+\|c\|} + \frac{|c^\top x^+ - b^\top y|}{1 +|c^\top x^+ |+| b^\top y| }$ computed using the original data of the LP instance for consistency.  (The rather chaotic pattern of the early iterations of the Learned Step-size heuristic is due to the fact that the first $25,000$ iterations are used to test five different step-sizes.) For most of the LP instances in the MIPLIB 2017 we observed that the Learned Step-size heuristic enables much faster linear convergence, though \texttt{n3705} is an exception to this observation.  We also observed that the preconditioner improves convergence significantly across all problems.  

\begin{figure}[htbp]
	\centering
	\includegraphics[width=\linewidth]{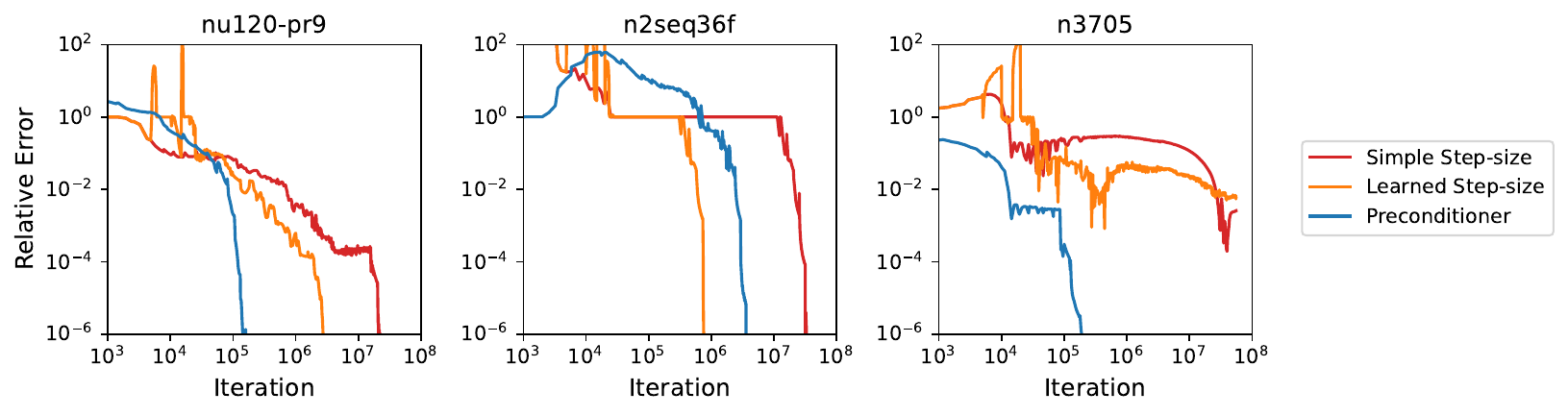}
	\caption{Performance of Algorithm \ref{alg: PDHG with restarts} using two heuristic strategies, on problems \texttt{nu120-pr9}, \texttt{n2seq36f} and \texttt{n3705}.}	\label{fig LP real instances}
\end{figure}

Last of all, we tested all four combinations of heuristics on a large subset of the MIPLIB 2017 dataset, namely all LP relaxation problems in which $mn \le 10^9$, of which there are 574 such problems in total.  For this evaluation we consider an LP instance to be ``solved'' if Algorithm \ref{alg: PDHG with restarts} computes a solution $(x,y)$ for which $\calE_r (x,y) \le 10^{-4}$. Figure \ref{fig proportion} shows the fraction of solved problems (of the 574 instances) on the horizontal axis, and the maximum iterations (leftmost plot) and the maximum runtime (rightmost plot).  Notice that these two techniques both help the rPDHG solve more problems in a shorter time. Among the two heuristics, the preconditioner plays a prominent role in reducing the number of iterations, and also in reducing runtimes. Moreover, applying both heuristics is also valuable.  Finally, it bears mentioning that if the problem is so large that the cost of working a matrix factorization is prohibitive, it is still possible to apply diagonal preconditioners to potentially improve the value of $\kappa$, see \cite{applegate2021practical}.

\begin{figure}[htbp]
	\centering
	\includegraphics[width=0.9\linewidth]{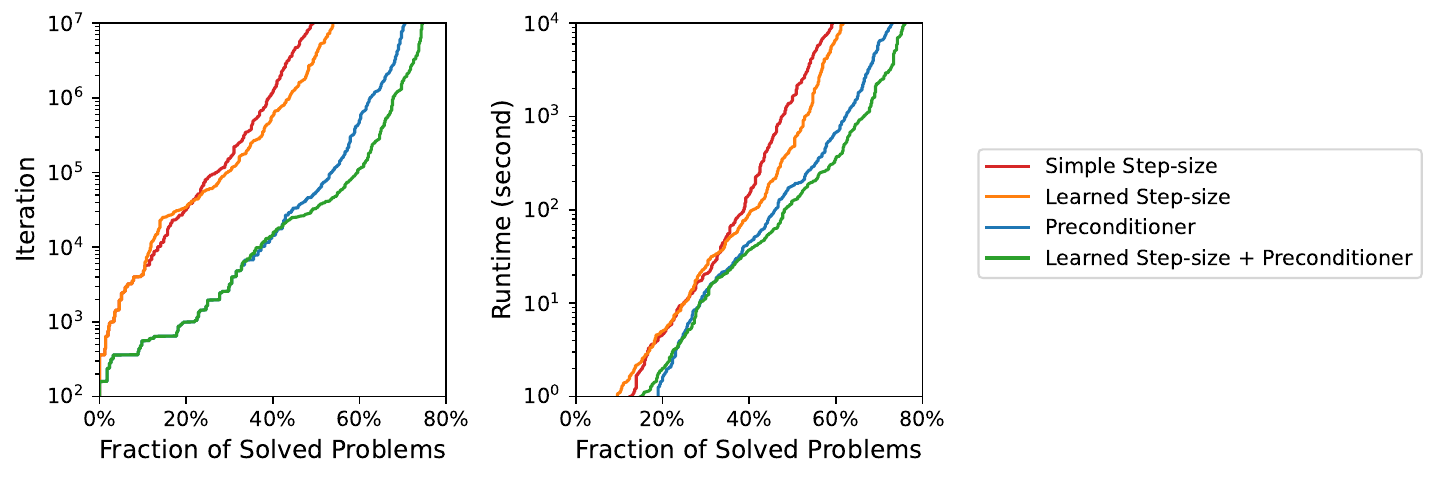}
	\caption{Performance of Algorithm \ref{alg: PDHG with restarts} combined with heuristic strategies, on the 574 LP relaxation instances from the MIPLIB 2017 dataset.}	\label{fig proportion}
\end{figure}

The above experiments show that these two heuristics -- which are motivated by our theoretical results -- have clear potential to improve the practical performance of Algorithm \ref{alg: PDHG with restarts}, which also highlights the value of the theoretical understanding in the development of practical improvements in solution methods.

% APPENDIX 
\appendix

\section*{Appendix}

% PRELIMINARY EQUATIONS 
\section{From the Distance to Optima to the Relative Error}\label{phoebe}

Here we show that the relative error is upper-bounded by the distance to optima up to a scalar factor.  
For the pair $(x,y)$ and $s := c - A^\top y$, let $\tilde{\calE}_r (x,s)$ denote the corresponding relative error of $(x,y)$, i.e., $\tilde{\calE}_r (x,s):= \calE_r (x,y)$. Then it follows that $\tilde{\calE}_r (x,s) = \frac{\|Ax^+ - b\|}{1+ \|b\|} + \frac{\|s^-\|}{1+\|c\|} + \frac{|c^\top x^+ - q^\top (c-s)|}{1 +|c^\top x^+ |+| q^\top (c-s)| }$, in which $q := A^\top (AA^\top)^\dag b$. 

\begin{remark}[Relative error bounded by distance to optima]\label{stargaze} There exists a scalar constant $\bar c$ depending only on the data $(A,b,c)$, such that for any solution pair $(x,y)$ and $s:=c-A^\top y$, the relative error of $(x,s)$ is upper bounded by the distance to optima by a factor of $\bar c$, namely $\tilde{\calE}_r(x,s) \le \bar c \cdot \calE_d(x,s)$. One such value of $\bar c$ is $\bar c = c_0 := \frac{2\|A\|}{1+\|b\|}+ 2\|c\| + \|q\| + 1$.
\end{remark}

\begin{proof}
The error $\tilde{\calE}_r(x,s)$ is comprised of three parts, namely $\frac{\|Ax^+ - b\|}{1+ \|b\|}$, $ \frac{\|s^-\|}{1+\|c\|} $ and $ \frac{|c^\top x^+ - q^\top (c-s)|}{1 +|c^\top x^+ |+| q^\top (c-s)|}$.  For any $x^\star \in \calX^\star$ and $s^\star \in \calS^\star$, because $\|x - x^\star\| \ge \|x - x^+\|$,  we have
$$
 \frac{\|Ax^+ - b\|}{1+ \|b\|} \le 	\frac{\|Ax - Ax^+\| + \|Ax - Ax^\star\|}{1+ \|b\|} \le \frac{\|A\| \cdot(\|x - x^\star\| + \|x - x^+\|)}{1+ \|b\|} \le \frac{2\|A\| \cdot \|x - x^\star\|}{1+ \|b\|} \ 
$$
and
$$
\begin{aligned}
	&	\frac{|c^\top x^+ - q^\top (c-s)|}{1 +|c^\top x^+ |+| q^\top (c-s)|}  \le |c^\top x^+ - q^\top (c-s)| = 
	|c^\top x^+ - c^\top x^\star - q^\top (c-s) + q^\top (c-s^\star)|
	\\
	\le \ &  \|c\| \cdot (\|x - x^\star\| + \|x - x^+\|) + \|q\|\cdot \|s - s^\star\|\le 2\|c\| \cdot \|x - x^\star\|  + \|q\|\cdot \|s - s^\star\| \ .
\end{aligned}
$$
Similarly, because $\|s - s^+\| \le \|s - s^\star\|$, we have $ \frac{\|s^-\|}{1+\|c\|} = \frac{\|s - s^+\|}{1+\|c\|} \le \frac{\|s - s^\star \|}{1+\|c\|} \le \|s - s^\star \|$. Now let $x^\star := \arg\min_{\hat{x}\in\calX^\star}\|x-\hat{x}\|$ and $s^\star :=\arg\min_{\hat{s}\in\calS^\star}\|s-\hat{s}\|$, then combining the above three inequalities implies that $\tilde{\calE}_r(x,s) \le \bar c \cdot \calE_d(x,s)$. \end{proof}

\section{Proofs for Section \ref{subsec: sublinear convergence}}\label{sec:proof preliminaries}\label{guitar}

\subsection{Proof of Lemma \ref{lm: convergence of PHDG without restart}}\label{sec: proof of convergence of PHDG without restart}

\begin{proof}[Proof of Lemma \ref{lm: convergence of PHDG without restart}]
	We first examine primal near-feasibility. It holds trivially from the supposition that $ \bar x \ge 0$ that $\dist(\bar{x},\mathbb{R}^n_+)  = 0$.  Let us now show the upper bound on $ \dist(\bar{x},V_p)$.  We assume that $ \dist(\bar{x},V_p) > 0$ as otherwise the upper bound holds trivially.  Let $\hat{x} = \arg\min_{x\in V_p}\|x - \bar{x}\|$ and hence $\dist(\bar{x},V_p) = \|\hat{x} - \bar{x}\|$. Note from the standard optimality conditions that $\hat{x} - \bar{x} \in \operatorname{Im}(A^\top)$ and hence there exists $w$ such that $\hat{x} -\bar{x} = A^\top w$ and also $w\in \operatorname{Im}(A)$.  It further holds that $	 A^\top w \ne 0$, since $ \dist(\bar{x},V_p) > 0$.
	
	From the definition of $\rho(r;\cdot)$ we have:
	\begin{equation}\label{eq  lm: convergence of PHDG without restart 1}
		L(\bar{x},y) - L(x,\bar{y}) \le r \rho(r;\bar{z}) \ \ \text{for any $z \in \widetilde{B}(r;\bar{z})$ . }
	\end{equation}
	Define $y:= \bar{y} + \sqrt{\sigma}r \cdot w/ \|w\|$ and set $z := (\bar x, y)$, whereby $ z \in \widetilde{B}(r;\bar{z})
	$ and hence from \eqref{eq  lm: convergence of PHDG without restart 1} we have 
	$$ r \rho(r;\bar{z}) \ge L(\bar{x},y) - L(\bar x,\bar{y}) = (b-A \bar x)^\top (y - \bar y) = (\hat x - \bar x)^\top A^\top (y - \bar y)  = w^\top A A^\top  w  \sqrt{\sigma}r / \|w\| \ . $$ It then follows that 
	$$ \dist(\bar{x},V_p) = \|\hat{x} - \bar{x}\| = \|A^\top w\| \le  \frac{\rho(r;\bar{z})}{\sqrt{\sigma}} \cdot \frac{\|w\|}{\|A^\top w\|} = \frac{\rho(r;\bar{z})}{\sqrt{\sigma}} \cdot \frac{\|w\|}{\|w\|_{AA^\top}} \le  \frac{\rho(r;\bar{z}) }{\sqrt{\sigma} \lambda_{\min}} \ , 
	$$ where the last inequality above follows since $\lambda_{\min} = \min_{v\in\operatorname{Im}(A)}\frac{\|v\|_{AA^\top}} {\|v\|}$. This proves item {\em 1}.
	
	Let us now examine dual near-infeasibility. Notice that by definition it holds that $\bar{s} \in V_d$.  Define $x:= \bar{x} + \sqrt{\tau} r  \cdot (\bar{s})^- / \|(\bar{s})^-\|$ and set $z := (x, \bar y)$, whereby $ z \in \widetilde{B}(r;\bar{z})
	$ and hence from \eqref{eq  lm: convergence of PHDG without restart 1} we have 
	$$ r \rho(r;\bar{z}) \ge L(\bar x, \bar y) - L(x,\bar{y}) =  (c - A^\top \bar{y})^\top (\bar{x} - x) = -\bar{s}^\top  (\bar{s})^- \sqrt{\tau} r / \|(\bar{s})^-\| = \sqrt{\tau} r \|(\bar{s})^-\| \ ,  $$
	and hence $\dist(\bar{s},\mathbb{R}^n_+) = \|(\bar{s})^-\| \le \frac{1}{\sqrt{\tau}} \cdot \rho(r;\bar{z}) $.  This proves item {\em 2}.
	
	Lastly, we examine the duality gap $\gap(\bar{x},\bar{s}) = c^\top \bar{x} - b^\top \bar{y}$, and we consider two cases, namely $\bar z = 0$ and $\bar{z} \ne 0$.  If $\bar{z} = 0$, then $\gap(\bar{x},\bar{s}) = c^\top \bar{x} - b^\top \bar{y}=0$, which satisfies the duality gap bound trivially.  If $\bar{z} \neq 0$, then define $z := \bar{z} - \min\{\frac{r}{\|\bar{z}\|_M},1\}\bar{z}$, which satisfies $\|z - \bar{z}\|_M \le r$. 
Substituting the $z := \bar{z} - \min\{\frac{r}{\|\bar{z}\|_M},1\}\bar{z}$ in \eqref{eq lm: convergence of PHDG without restart 1} yields:
	\begin{equation}\label{eq  lm: convergence of PHDG without restart 7}
		r \rho(r;\bar{z}) \ge L(\bar{x},y) - L(x,\bar{y}) = \min\left\{\frac{r}{\|\bar{z}\|_M},1\right\}(c^\top \bar{x} - b^\top \bar{y}) \ ,
	\end{equation}
	which simplifies to
	\begin{equation}\label{eq  lm: convergence of PHDG without restart 8}
		c^\top \bar{x} - b^\top \bar{y}  \le   \max\{ r, \|\bar{z}\|_M\} \rho(r;\bar{z}) \ .
	\end{equation}
	This proves the desired bound in item {\em 3}.
\end{proof}

\subsection{Proof of Lemma \ref{lm: rho sublinear PDHG}}\label{sec: proof of rho sublinear PDHG}
We first recall the convergence result for PDHG in Remark 2 of \cite{chambolle2016ergodic}. 

\begin{lemma} [Sublinear convergence of PDHG (Remark 2 of \cite{chambolle2016ergodic})]\label{lm: original sublinear PDHG} Suppose that  $\tau$ and $\sigma$ satisfy \eqref{eq  general step size requirement}. For all $K \ge 1$,  and for all $x\ge 0 $ and $y$ and $z=(x,y)$, it holds that\begin{equation}\label{beringer}
	L(\bar{x}^K, y) - L(x,\bar{y}^K) \le \frac{\left\|z - z^0\right\|_M^2}{2K} \ .
\end{equation}
\end{lemma}

The actual result in Remark 2 of \cite{chambolle2016ergodic} is slightly different than above, but the logic of the proof leads to \eqref{beringer} in our set-up for PDHG for LP.

\begin{proof}[Proof of Lemma \ref{lm: rho sublinear PDHG}]
From the triangle inequality it holds that $$
\left\|z - z^0\right\|_M^2  \le \left(
\|z - \bar{z}^K\|_M + \| \bar{z}^K - z^0\|_M
\right)^2 \ ,
$$
which then implies via Lemma \ref{lm: original sublinear PDHG} that every $z \in \widetilde{B}(\|\bar{z}^K - z^0\|_M;\bar{z}^K)$ satisfies
$$L(\bar{x}^K, y) - L(x,\bar{y}^K) \le \frac{1}{2K}\left\|z - z^0\right\|_M^2 \le \frac{ \left(
	\|z - \bar{z}^K\|_M + \| \bar{z}^K - z^0\|_M
	\right)^2}{2K} \le \frac{2}{K}\left\|\bar{z}^K - z^0\right\|_M^2  \ . $$
Therefore 
\begin{equation}\label{toolatenow}\rho(\|\bar{z}^K-z^0\|_M;\bar{z}^K)\le \frac{2 \|\bar{z}^K  - z^0\|_M}{K}  \ . \end{equation}
For any $z^\star\in \calZ^\star$, it follows from Lemma \ref{lm: nonexpansive property} that $ \|\bar{z}^K  - z^0\|_M \le  \|\bar{z}^K  - z^\star\|_M +  \|z^\star  - z^0\|_M \le  2 \|z^\star  - z^0\|_M$, which combines with \eqref{toolatenow} to prove the lemma.
\end{proof}

\subsection{Proof of Lemma \ref{lm: R in the opt gap convnergence}}

\begin{proof}[Proof of Lemma \ref{lm: R in the opt gap convnergence}]
	Let  $z^\star := \arg\min_{z\in\calZ^\star} \| z - z^{a}\|_M$, we have
	\begin{equation}\label{eq lm: R in the opt gap convnergence 1}
		\begin{array}{ll}
			 \max\{ \|z^b - z^c\|_M, \|z^b\|_M\}   \\
			\le  \ \    \max\{\|z^b-z^\star\|_M + \|z^c-z^\star\|_M, \|z^b - z^\star\|_M + \|z^\star\|_M\}  \\
			\le    \ \     
			\max\{\|z^{a}-z^\star\|_M + \|z^{a}-z^\star\|_M, \|z^{a} - z^\star\|_M + \|z^\star\|_M\}   \\
			\le     \ \    
			\max\{\|z^{a}-z^\star\|_M + \|z^{a}-z^\star\|_M, \|z^{a} - z^\star\|_M +\|z^{a} - z^\star\|_M + \|z^{a} \|_M\}   \\
			=     \ \    2\|z^{a} - z^\star\|_M + \|z^{a} \|_M  \ ,
		\end{array}
	\end{equation}
where the second inequality uses the nonexpansive property (Lemma \ref{lm: nonexpansive property}).  This completes the proof. \end{proof}

\subsection{Proof of Proposition \ref{fact: norm upperbound}}\label{tension}

We first prove the following elementary inequality: 

\begin{proposition}\label{jeep} For any $y$ and $s= c - A^\top y$, it holds that $
		\dist(y,\calY^\star) \le \dist(s,\calS^\star)\cdot \frac{1}{\lambda_{\min}}$.
\end{proposition}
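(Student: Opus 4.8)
The goal is to bound the Euclidean distance from $y$ to the dual optimal set $\calY^\star$ in terms of the distance from the corresponding slack $s = c - A^\top y$ to $\calS^\star$. The plan is to start with a nearest optimal slack $s^\star := \arg\min_{\hat s \in \calS^\star}\|s - \hat s\|$, so that $\dist(s,\calS^\star) = \|s - s^\star\|$, and then produce a dual vector $y^\star \in \calY^\star$ whose slack equals $s^\star$; the quantity $\|y - y^\star\|$ will then be an upper bound on $\dist(y,\calY^\star)$, and the task reduces to bounding $\|y - y^\star\|$ by $\|s - s^\star\|/\lambda_{\min}$.

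The key observation is that both $s$ and $s^\star$ lie in $V_d = c + \operatorname{Im}(A^\top)$: the vector $s$ does by its definition $s = c - A^\top y$, and $s^\star$ does because it is dual feasible (it lies in $\calF_d \subset V_d$). Hence $s - s^\star \in \operatorname{Im}(A^\top)$, so there exists $w \in \operatorname{Im}(A)$ (we may take $w$ in the row space since $\operatorname{Im}(A^\top) = \operatorname{Im}(A^\top P_{\operatorname{Im}(A)})$) with $s - s^\star = A^\top(y^\star - y)$ for a suitable choice; more precisely, writing $s^\star = c - A^\top y^\star$ for the associated $y^\star \in \calY^\star$, we get $s - s^\star = -A^\top y + A^\top y^\star = A^\top(y^\star - y)$. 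Decomposing $y^\star - y$ into its component in $\operatorname{Null}(A^\top)^\perp = \operatorname{Im}(A)$ and its component in $\operatorname{Null}(A^\top)$, only the former survives under $A^\top$; and since $\calY^\star$ is itself defined only up to $\operatorname{Null}(A^\top)$ (the dual objective and slack depend on $y$ only through $A^\top y$, as noted in the preliminaries around problem \eqref{pro: general dual LP}), we may choose the representative $y^\star \in \calY^\star$ so that $y^\star - y \in \operatorname{Im}(A)$.

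With $v := y^\star - y \in \operatorname{Im}(A)$ and $A^\top v = s - s^\star$, we have $\|s - s^\star\| = \|A^\top v\| \ge \sigma_{\min}^+(A)\|v\| = \lambda_{\min}\|v\|$, where the inequality uses that $v$ lies in the row space of $A$ so $\|A^\top v\| \ge \lambda_{\min}\|v\|$ (equivalently $\lambda_{\min} = \min_{0 \ne v \in \operatorname{Im}(A)} \|A^\top v\|/\|v\|$, which is the same fact used in the proof of Lemma \ref{lm: convergence of PHDG without restart} where $\lambda_{\min} = \min_{v \in \operatorname{Im}(A)} \|v\|_{AA^\top}/\|v\|$). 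Therefore $\dist(y,\calY^\star) \le \|y - y^\star\| = \|v\| \le \|s - s^\star\|/\lambda_{\min} = \dist(s,\calS^\star)/\lambda_{\min}$, which is the claim.

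The main obstacle — really the only subtlety — is the bookkeeping of which representative of $\calY^\star$ to use: one must argue that there is a choice of $y^\star \in \calY^\star$ with $y^\star - y$ orthogonal to $\operatorname{Null}(A^\top)$, so that the singular-value bound can be applied to a vector genuinely in the row space of $A$. This follows because $\calY^\star$ is invariant under translation by $\operatorname{Null}(A^\top)$ (all such $y$ share the same slack $s^\star$ and the same dual objective), so one can always project the $\operatorname{Null}(A^\top)$-component away; alternatively, one phrases everything directly in terms of slacks and uses $s - s^\star \in \operatorname{Im}(A^\top)$ together with $\lambda_{\min} = \sigma_{\min}^+(A^\top)$ to produce the least-norm $v$ with $A^\top v = s - s^\star$, which automatically lies in $\operatorname{Im}(A)$. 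Everything else is a one-line singular value estimate.
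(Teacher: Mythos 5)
Your proof is correct and follows essentially the same approach as the paper's: both arguments hinge on decomposing the dual difference $y^\star - y$ into its $\operatorname{Im}(A)$ and $\operatorname{Null}(A^\top)$ components, absorbing the nullspace piece into $\calY^\star$ (which is invariant under such translations), and then applying the singular value bound $\|A^\top w\| \ge \lambda_{\min}\|w\|$ for $w \in \operatorname{Im}(A)$. The paper routes this through the $AA^\top$-semi-norm and a thin eigendecomposition, while your version works directly with slacks and is a touch more streamlined, but the underlying idea is identical.
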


\begin{proof} First observe that:
$$
\dist(s,\calS^\star) = \dist(c - A^\top y, \calS^\star) = \dist(c - A^\top y, c - A^\top (\calY^\star)) = \dist(A^\top y, A^\top (\calY^\star)) = \dist_{AA^\top} (y,\calY^\star) \ . 
$$
Let $AA^\top = PD^2P^\top$ denote the thin eigendecomposition of $AA^\top$, so that $P^\top P=I$ and $D$ is the diagonal matrix of positive singular values of $A$, whereby $D_{ii} \ge \min_j D_{jj} = \lambda_{\min}$ for each $i$.  Now let $y^\star$ solve the shortest distance problem from $y$ to $\calY^\star$ in the norm $\| \cdot\|_{AA^\top}$, hence $y^* \in \calY^*$ and  $\dist_{AA^\top} (y,\calY^\star) = \| y - y^\star\|_{AA^\top}$, and let us write $y-y^\star = u + v $ where $u \in \operatorname{Im}(A)$ and $v \in \operatorname{Null}(A^\top)$. Then setting $\tilde y = y^\star + v$ and noting that $\tilde y \in \calY^\star$, we have:
\begin{equation}\label{skylight} \dist_{AA^\top}(y,\calY^\star) \le \|y - \tilde y\|_{AA^\top} = \|u\|_{AA^\top} \ . \end{equation}
Next notice that since $u \in \operatorname{Im}(A) = \operatorname{Im}(AA^\top)$, there exists $\pi$ for which $u=AA^\top \pi$, and define $\lambda = D^2P^\top \pi$. It then follows that $u = P \lambda$, $\lambda = P^\top u$, and $\|u\| = \|\lambda\|$.   We therefore have:
\begin{equation}\label{firepit}\begin{aligned}
 \dist_{AA^\top} (y,\calY^\star)^2 & = (u+v)^\top AA^\top (u+v)   \\ & = u^\top AA^\top u = \lambda^\top P^\top P D^2P^\top P\lambda = \lambda^\top D^2 \lambda \ge \lambda_{\min}^2 \|\lambda\|^2 \ ,
 \end{aligned}
\end{equation} 
and hence $\dist(s,\calS^\star) = \dist_{AA^\top} (y,\calY^\star) \ge \lambda_{\min} \|\lambda\| = \lambda_{\min} \|u\| \ge \lambda_{\min}\dist(y,\calY^\star) $, 
where the second inequality uses \eqref{skylight}.  Rearranging completes the proof.
\end{proof}

\begin{proof}[Proof of Proposition \ref{fact: norm upperbound}]
We first prove \eqref{silly}. For a given $\alpha > 0$, the statement ``$\|z\|_M \ge \alpha\|z\|_N $ for any $z$'' is equivalent to 
$$
\|z\|_M^2 - \alpha^2\|z\|_N^2 = z^\top Q_\alpha z\ge 0 \ , \ \text{ where }Q_\alpha := \left(\begin{matrix}
	\frac{1-\alpha^2}{\tau}I_n & -A^\top \\
	-A & \frac{1 - \alpha^2}{\sigma}I_m
\end{matrix}\right) \ , 
$$
and hence $\|z\|_M \ge \alpha\|z\|_N $ for any $z$ if and only if $Q_\alpha \succeq 0$.  A Schur complement argument then establishes that $Q_\alpha \succeq 0$ if and only if $(1-\alpha^2)^2/\sigma\tau  \ge \lambda_{\max}^2$, which rearranges to $\alpha \le  \sqrt{1 - \sqrt{\tau\sigma}\lambda_{\max}}$ (where the right-hand side is well defined due to \eqref{eq  general step size requirement}).  This establishes the first inequality in \eqref{silly}.  For the second inequality, note that the statement ``$\|z\|_M \le \sqrt{2}\|z\|_N $ for any $z$'' holds if and only $1/(\tau \sigma) \ge \lambda_{\max}^2$, which is satisfied due to \eqref{eq  general step size requirement}), completing the proof of \eqref{silly}.

Let us now prove \eqref{eq of lm: M norm to seperable norm}. We have
	\begin{equation}\label{eq lm: M norm to seperable norm 1}
		\begin{aligned}
			\dist_M(z,\calZ^\star) & \ = \min_{\tilde z\in\calX^\star \times \calY^\star} \|z-\tilde z\|_M \le \sqrt{2} \cdot \min_{\tilde z\in\calX^\star \times \calY^\star} \|z-\tilde z\|_{N} \\
			&  \ \le \frac{\sqrt{2} }{ \sqrt{\tau}} \dist(x,\calX^\star) + \frac{\sqrt{2} }{ \sqrt{\sigma}} \dist(y,\calY^\star)\le \frac{\sqrt{2} }{ \sqrt{\tau}} \dist(x,\calX^\star) + \frac{\sqrt{2} }{ \sqrt{\sigma}\lambda_{\min} } \dist(s,\calS^\star) \ ,
		\end{aligned}
	\end{equation}
where the first inequality utilities \eqref{silly} and the third inequality uses Proposition \ref{jeep}. 
	
We next prove \eqref{eq of lm: M norm to seperable norm 2}. Again using \eqref{silly}, we have $\dist_M(z,\calZ^\star) \ge  \sqrt{1 - \sqrt{\tau\sigma}\lambda_{\max}} \cdot	\dist_N(z,\calZ^\star)$,
and it also holds that  $\dist_N(z,\calZ^\star)  \ge \max \cdot \left\{
		\frac{1}{\sqrt{\tau}}\dist(x,\calX^\star), \frac{1}{\sqrt{\sigma}} \dist(y,\calY^\star)
		\right\}$.
Furthermore, $\dist(s,\calS^\star) = \dist(A^\top y, A^\top (\calY^\star)) \le \|A\| \cdot \dist(y,\calY^\star)$ and $\|A\| = \lambda_{\max}$, which combined with the above two inequalities yields the proof of \eqref{eq of lm: M norm to seperable norm 2}.
\end{proof}

\subsection{Proof of Theorem \ref{thm rho sublinear PDHG to LP natural}}

\begin{proof}[Proof of Theorem \ref{thm rho sublinear PDHG to LP natural}]
	Before proving Theorem \ref{thm rho sublinear PDHG to LP natural}, we first prove the following claim:
	\begin{enumerate}[nosep]
		\item Primal near-feasibiltiy: $ \dist(\bar{x}^K,V_p)  \le \frac{1}{\sqrt{\sigma} \lambda_{\min}}\cdot  \frac{4\dist_M(0,\calZ^\star)}{K}$ and $\dist(\bar{x}^K,\mathbb{R}^n_+) = 0$ , 
		\item Dual near-feasibility: $\dist(\bar{s}^K,V_d)  = 0$ and $\dist(\bar{s}^K,\mathbb{R}^n_+) \le \frac{1}{\sqrt{\tau}} \cdot\frac{4\dist_M(0,\calZ^\star)}{K}  $ , and
		\item Duality gap: $\gap(\bar{x}^K, \bar{s}^K)  \le \frac{8\dist_M(0,\calZ^\star)^2}{K}  $ . 
	\end{enumerate} 
	The upper bounds for the primal near-feasibility and dual near-feasibility follow directly from Lemmas \ref{lm: convergence of PHDG without restart} and \ref{lm: rho sublinear PDHG}.  To prove the bound on the duality gap, let  $r = \|\bar{z}^K - z^0\|_M$, and let $z^\star \in \calZ^\star$.  Then it follows from the duality gap bound in Lemma \ref{lm: convergence of PHDG without restart} that
	$$
	\begin{array}{rl}
		\gap(\bar{x}^K,\bar{s}^K) & \le  \max\{ \|\bar{z}^K - z^0\|_M, \|\bar{z}^K\|_M\} \cdot  \rho(\|\bar{z}^K - z^0\|_M;\bar{z}^K)  \le   \|\bar{z}^K\|_M  \cdot \frac{4\|z^0 - z^\star\|_M}{K} \ , 
	\end{array}
	$$ 
	where the second inequality above uses Lemma \ref{lm: rho sublinear PDHG} and $z^0 = (0,0)$.  Now we apply Lemma \ref{lm: R in the opt gap convnergence} with $z^a = z^0 = (0,0)$ and $z^b =  z^c = \bar{z}^K$, which yields $\|\bar{z}^K\|_M \le 2 \dist_M(0,\calZ^\star)$, and we obtain $\gap(\bar{x}^K,\bar{s}^K) \le \frac{8 \dist_M(0,\calZ^\star)^2}{K}$, which completes the proof of the claim. 

	Proposition \ref{fact: norm upperbound} states that $\dist_M(0,\calZ^\star)$ is upper bounded by $\frac{\sqrt{2}}{\sqrt{\tau}} \dist(0,\calX^\star) + \frac{\sqrt{2}}{\sqrt{\sigma}\lambda_{\min}} \dist(c,\calS^\star)$. Substituting this upper bound into the claim proves Theorem \ref{thm rho sublinear PDHG to LP natural}.
\end{proof}

\section{Proofs for Section \ref{restarts}}\label{appsec: proof of restarts}

\subsection{Proof of Lemma \ref{lm: complexity of PDHG with adaptive restart}}

\begin{proof}[Proof of Lemma \ref{lm: complexity of PDHG with adaptive restart}]
	
	We first bound the number of inner iterations $k$ of rPDHG between restarts in the outer loop. When $n=0$ we have $k=1$.  For $n \ge 1$ we show that $k \le  5 \condN /\beta  $.  To see this, note that it follows from Lemma \ref{lm: rho sublinear PDHG} that
	\begin{equation}\label{eq thm: complexity of PDHG with adaptive restart 1}
		\rho(\|\bar{z}^{n,k}-z^{n,0}\|_M;\bar{z}^{n,k}) \le \frac{4\dist_M(z^{n,0} ,\calZ^\star)}{k}\ .
	\end{equation}
We may presume that $\rho(\|z^{n,0}-z^{n-1,0}\|_M; z^{n,0}) \neq 0$, for otherwise it follows from Lemma \ref{lm: convergence of PHDG without restart} that $z^{n,0} \in \calZ^\star$, and Algorithm \ref{alg: PDHG with restarts} would have terminated already in line $\bf 10$.  Let us rewrite \eqref{eq thm: complexity of PDHG with adaptive restart 1} as:
	\begin{equation}\label{eq thm: complexity of PDHG with adaptive restart 2}
		\frac{	\rho(\|\bar{z}^{n,k}-z^{n,0}\|_M;\bar{z}^{n,k})}{\rho(\|z^{n,0}-z^{n-1,0}\|_M; z^{n,0})} \le \frac{4}{k}  \cdot 	\frac{\dist_M(z^{n,0} ,\calZ^\star)}{\rho(\|z^{n,0}-z^{n-1,0}\|_M; z^{n,0})} \ .
	\end{equation}
	It then follows from \eqref{eq thm: complexity of PDHG with adaptive restart 2} and \eqref{eq restart L condition} that $k =\lceil 4 \condN/\beta \rceil $ suffices to ensure that condition \eqref{catsdogs} is satisfied. Since $\condN \ge 1$ and $\beta \in (0,1)$, such a $k$ is no larger than $5\condN/\beta$.
	
	Next we prove an upper bound on the number of outer iterations. When $n = 0$ rPDHG restarts when $k = 1$, and it follows from Lemma \ref{lm: rho sublinear PDHG} and inequality \eqref{eq of lm: M norm to seperable norm} that the initial normalized duality gap is upper bounded as follows:
	\begin{equation}\label{eq thm number of restarts 1}
		\rho(\|\bar{z}^{0,1}-z^{0,0}\|_M;\bar{z}^{0,1}) \le 4 \dist_M(z^{0,0} ,\calZ^\star) \le 4 \left(\frac{\sqrt{2}}{\sqrt{\tau}}+\frac{\sqrt{2}}{\sqrt{\sigma}\lambda_{\min}} \right) \calE_d(x^{0,0},s^{0,0})\ .
	\end{equation}
Now note from \eqref{eq of lm: M norm to seperable norm 2} that 
	$$
	\begin{aligned}
		\dist_M(z^{n,0}, \calZ^\star)  
		\ge \ &  \gamma \cdot \max\left\{\frac{\dist(x^{n,0},\calX^\star)}{\sqrt{\tau}} , \frac{\dist(s^{n,0},\calS^\star)}{\sqrt{\sigma}\lambda_{\max}}  \right\} \\
		\ge \ & \gamma \cdot \min\left\{\frac{1}{\sqrt{\tau}} , \frac{1}{\sqrt{\sigma}\lambda_{\max}} \right\}  \cdot \calE_d(x^{n,0},s^{n,0}) \ ,
	\end{aligned}
	$$ 
	where  $\gamma:= \sqrt{1-\sqrt{\sigma\tau}\lambda_{\max}}$.
	Substituting this inequality back into \eqref{eq restart L condition} yields:
		\begin{equation}\label{eq thm number of restarts 2}
		\calE_d(x^{n,0},s^{n,0}) \le \frac{\condN}{\gamma} \cdot \max\{\sqrt{\tau},\sqrt{\sigma}\lambda_{\max}\} \cdot \rho(\|z^{n,0} - z^{n-1,0}\|_M;z^{n,0}) \ .
	\end{equation}
	According to the restart condition, we have $\rho(\|z^{n,0} - z^{n-1,0}\|_M;z^{n,0}) \le \beta \cdot \rho(\|z^{n-1,0} - z^{n-2,0}\|_M;z^{n-1,0})$ for each $n\ge 2$.  And noting that $z^{1,0} = \bar{z}^{0,1}$, it follows that:
	\begin{equation}\label{eq thm number of restarts 3}
		\begin{aligned}
			\rho(\|z^{n,0} - z^{n-1,0} \|_M;z^{n,0}) & \ \le \beta^{n-1} \cdot 	\rho(\|z^{1,0}-z^{0,0}\|_M;z^{1,0})  \\
			& \ =   \beta^{n-1} \cdot 	\rho(\|\bar{z}^{0,1}-z^{0,0}\|_M;\bar{z}^{0,1})   \le 4\beta^{n-1} \left(\frac{\sqrt{2}}{\sqrt{\tau}}+\frac{\sqrt{2}}{\sqrt{\sigma}\lambda_{\min}} \right) \calE_d(x^{0,0},s^{0,0}) \ ,
		\end{aligned}
	\end{equation}
	where the second inequality uses \eqref{eq thm number of restarts 1}.
	Combining \eqref{eq thm number of restarts 2} and \eqref{eq thm number of restarts 3} yields:
	\begin{equation}\label{eq thm number of restarts 4}
		\begin{aligned}
			\calE_d(x^{n,0},s^{n,0})  \le & \  \frac{\condN}{\gamma} \cdot \max\{\sqrt{\tau},\sqrt{\sigma}\lambda_{\max}\} \cdot 4\beta^{n-1} \cdot  \left(\frac{\sqrt{2}}{\sqrt{\tau}}+\frac{\sqrt{2}}{\sqrt{\sigma}\lambda_{\min}} \right) \calE_d(x^{0,0},s^{0,0}) \\
			\le &\  4\beta^{n-1} \cdot  \condN \cdot \frac{\sqrt{2}}{\gamma} \cdot 
			\left(
			\sqrt{\tau} + \sqrt{\sigma}\lambda_{\max}
			\right)\cdot\left(
			\frac{1}{\sqrt{\tau}}+ \frac{1}{\sqrt{\sigma}\lambda_{\min}}
			\right)
			\cdot \calE_d(x^{0,0},s^{0,0}) \ .
		\end{aligned}
	\end{equation}
Note that $\gamma = \sqrt{1-\sqrt{\sigma\tau}\lambda_{\max}}$, whereby \eqref{eq thm number of restarts 4} implies that for any $\eps > 0$, $\calE_d(x^{n,0},s^{n,0})  \le \eps$ for all 
	\begin{equation}
			n \ge     \left\lceil\frac{\ln\left(   
				4 \condN \cdot \frac{\sqrt{2}}{\sqrt{1-\sqrt{\sigma\tau}\lambda_{\max}}} \cdot \left(
				\sqrt{\tau} + \sqrt{\sigma}\lambda_{\max}
				\right)\cdot\left(
				\frac{1}{\sqrt{\tau}}+ \frac{1}{\sqrt{\sigma}\lambda_{\min}}
				\right) \cdot \calE_d(x^{0,0},s^{0,0} )\cdot \eps^{-1} 
				\right)}{\ln(\beta^{-1})} \right\rceil+ 1 \ ,
	\end{equation}
which must be true when
	\begin{equation}\label{sunny}
	n \ge \frac{\ln\left(   
		4 \condN \cdot \frac{\sqrt{2}}{\sqrt{1-\sqrt{\sigma\tau}\lambda_{\max}}} \cdot \left(
		\sqrt{\tau} + \sqrt{\sigma}\lambda_{\max}
		\right)\cdot\left(
		\frac{1}{\sqrt{\tau}}+ \frac{1}{\sqrt{\sigma}\lambda_{\min}}
		\right) \cdot \calE_d(x^{0,0},s^{0,0} )\cdot \eps^{-1} \cdot \beta^{-1}
		\right)}{\ln(\beta^{-1})} + 1 \ .
\end{equation}
The upper bound for the total number of \textsc{PDHGstep} iterations now follows from \eqref{sunny} and noting that the inner loop at $n=0$ uses $k=1$ iteration whereas for all $n \ge 1$ we have bounded the number of \textsc{PDHGstep} iterations by $k \le  5\condN/\beta $. \end{proof}

\subsection{Proof of Theorem \ref{thm special step size complexity}}\label{appsec: proof of special step size complexity}

\begin{proof}[Proof of Theorem \ref{thm special step size complexity}]
Substituting in the step-sizes \eqref{eq smart step size 2} and using the norm equalities $\|P_{\linVp}(c)\| = \|c\| $, $\|P_{\linVp^\bot}(c)\| = 0 $, $\|q\| = \dist(0,V_p)$, and $\|c\| = \dist(0,V_d)$ into the value of $\widetilde{\condN}$ in \eqref{eq of lm: Mdistance upper bounded by normalized duality gap2} yields: 
	\begin{equation}\label{pence2}
		\begin{array}{ll}
		 \widetilde{\condN}  =    
			  2\sqrt{2}\kappa  \left(
		\frac{4\thetax + 4 \sqrt{2}\cdot  \frac{\dist(c,\calS^\star)}{ \dist(0,V_d)}  }{\mu_p}  + \frac{3\thetas+ 4 \sqrt{2}  \cdot \frac{ \dist(0,\calX^\star) }{\dist(0, V_p)}}{\mu_d }
		\right) 	
			 \; .
		\end{array}
	\end{equation}
	Due to  \eqref{eq of lm: Mdistance upper bounded by normalized duality gap2}, the value of $\condN$ specified in \eqref{eq smart L} is at least as large as $\widetilde{\condN}$ in \eqref{pence2}, whereby it holds that 
	$
	\dist_M(z^{n,0},\calZ^\star) \le  \condN \cdot \rho(\|z^{n,0} - z^{n-1,0}\|_M;z^{n,0})   
	$
for the value of $ \condN $ specified in  \eqref{eq smart L}. Therefore condition \eqref{eq restart L condition} of Lemma \ref{lm: complexity of PDHG with adaptive restart} is satisfied, and it follows from Lemma \ref{lm: complexity of PDHG with adaptive restart} that $T$ satisfies \eqref{drewcurse} with the value of $\tilde c$ specified in the statement of the lemma, namely:
\begin{equation}\label{drewcurse4}
	T \ \le \ \frac{5}{\beta \ln(1/\beta)} \cdot   \condN  \cdot \ln\left(   
	\tilde c \cdot  \condN  \cdot \left(\frac{\calE_d(x^{0,0},s^{0,0} )}{\eps} 
	\right)\right) + 1\ .
	\end{equation}
 Substituting in the step-sizes \eqref{eq smart step size 2} and $\beta = 1/e$ into the value of $\tilde c$ in Lemma \ref{lm: complexity of PDHG with adaptive restart} we find that:
$\tilde c = 8 e \cdot \left(
1+\kappa \frac{\mu_p\|c\|}{\mu_d\|q\|}
\right)
\left(
1 + \frac{\mu_d\|q\|}{\mu_p\|c\|}
\right)$, and using $\beta = 1/e$ we finally arrive at:
\begin{equation}\label{drewcurse5 }
	T \ \le \ 5e \cdot   \condN  \cdot \ln\left(   
	8 e  \cdot  \condN  \cdot \left(\frac{\calE_d(x^{0,0},s^{0,0} )}{\eps}\right) \cdot \left(
1+\kappa \frac{\mu_p\|c\|}{\mu_d\|q\|}
\right)
\left(
1 + \frac{\mu_d\|q\|}{\mu_p\|c\|}
\right)
	\right) + 1\ .
	\end{equation}
Finally, because  $\kappa \ge 1$ and $\|q\| = \|b\|_Q$ (where $\|b\|_Q$ denotes $\|A^\top (AA^\top)^\dagger b\|$), it holds that $8 e  \cdot    \left(
1+\kappa \frac{\mu_p\|c\|}{\mu_d\|q\|}
\right)
\left(
1 + \frac{\mu_d\|q\|}{\mu_p\|c\|}
\right) \le \widehat{\calD}$ for $\widehat{\calD}$ defined in \eqref{eq smart D intro}.
Substituting this inequality into \eqref{drewcurse5 } yields \eqref{eq overall complexity2}, which completes the proof of the theorem.
\end{proof}

\section{Proofs for Section \ref{sec error ratio}}\label{subsec: proof of sec error ratio}

	First of all, we introduce the following generic LP format: 
	\begin{equation}\label{pro: genecal class of standard form LP}
			\calU^\star :=	\arg\min_{u\in\mathbb{R}^n}  \ g^\top u \quad  \text{s.t.} \  u \in \calF := V \cap \mathbb{R}^n_+ \ , 
	\end{equation}
	which generalizes the duality-paired LPs in \eqref{pro: primal dual reformulated LP} as specific instances. 
	Let $\calU^\star$, $u$, $g$, $\calF$ and $V$ be $\calX^\star$, $x$, $c$, $\calF_p$ and $V_p$, respectively, then \eqref{pro: genecal class of standard form LP} is the primal problem of \eqref{pro: primal dual reformulated LP}.
	Let $\calU^\star$, $u$, $g$, $\calF$ and $V$ be $\calS^\star$, $s$, $-q$, $\calF_d$ and $V_d$, respectively, then \eqref{pro: genecal class of standard form LP} is the dual problem  of \eqref{pro: primal dual reformulated LP}. 
	We let $\calF_{++}$ denote the strictly feasible solutions of \eqref{pro: genecal class of standard form LP}, let $f^\star$ denote the optimal objective value, let $\theta(u)$ denote the error ratio of $\calF$ at $u$,
	and let $\thetau$ denote the \limitinger~of \eqref{pro: genecal class of standard form LP}.

\subsection{Proof of Theorem \ref{thm local geometry to feasibility error ratio at optima}}
In this subsection, we prove Theorem \ref{thm local geometry to feasibility error ratio at optima} by proving its generalization to the generic LP \eqref{pro: genecal class of standard form LP}:
\begin{theorem}\label{thm generalized local geometry to feasibility error ratio at optima}
	For the generic LP \eqref{pro: genecal class of standard form LP}, suppose that the optimal solution set $\calU^\star$ is nonempty and bounded.  Then 
	\begin{equation}\label{eq generalized local geometry to feasibility error ratio at optima}
		\thetau \le \sup_{u^\star\in \calU^\star} \inf_{u_{\mathrm{int}} \in \calF_{++}}  \frac{\|u^\star - u_{\mathrm{int}}\|}{\min_i (u_{\mathrm{int}})_i} \ .
	\end{equation}
\end{theorem}

Before proving Theorem \ref{thm generalized local geometry to feasibility error ratio at optima}, we first introduce a more general result about $\theta(u)$.  Suppose $u\in V \setminus \calF$ and $u_{\mathrm{int}} \in \calF_{++}$, then the line segment from $u$ to $u_{\mathrm{int}}$ will contain a unique point that lies on the boundary of $\calF$, and let us denote this point by $\calF(u;u_{\mathrm{int}}) $.  More formally we have
\begin{equation}\label{ineq: local geometry to feasibility error ratio V_p 2}
\calF(u;u_{\mathrm{int}})  := \arg\min_{\tilde{u}}\left\{\|u - \tilde{u}\| : \tilde{u}\in \calF \ , \ \tilde{u}:= \lambda u_{\mathrm{int}} + (1- \lambda)u \ \text{for some } \lambda \in \mathbb{R}
\right\} \ .
\end{equation} The following lemma will be used in our proof of Theorem \ref{thm generalized local geometry to feasibility error ratio at optima}.

\begin{lemma}\label{lm: local geometry to feasibility error ratio}
For the general LP presentation \eqref{pro: genecal class of standard form LP}, suppose that Assumption \ref{assump: general LP} holds. Then for $u\in V \setminus \calF$ and $u_{\mathrm{int}} \in \calF_{++}$, it holds  that 
	\begin{equation}\label{ineq: geo thm basic ineq 7}
		\theta(u) \le \frac{ \| \calF(u;u_{\mathrm{int}}) - u_{\mathrm{int}}\|}{\min_i (u_{\mathrm{int}})_i} \le \frac{ \| u - u_{\mathrm{int}}\|}{\min_i (u_{\mathrm{int}})_i},
	\end{equation}
where $\calF(u;u_{\mathrm{int}}) $ is given by \eqref{ineq: local geometry to feasibility error ratio V_p 2}.

\end{lemma}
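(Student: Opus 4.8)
The plan is to work entirely along the line through $u$ and $u_{\mathrm{int}}$, which lies inside $V$ since $u,u_{\mathrm{int}}\in V$ and $V$ is affine; this reduces the claim to a one‑dimensional computation. First I would pin down the structure of $w:=\calF(u;u_{\mathrm{int}})$. Because $u\in V\setminus\calF$ forces $u\notin\mathbb{R}^n_+$ while $u_{\mathrm{int}}\in\mathbb{R}^n_{++}$, the set $\Lambda:=\{\lambda\in\mathbb{R}:\lambda u_{\mathrm{int}}+(1-\lambda)u\in\mathbb{R}^n_+\}$ is a closed interval (it is the preimage of the closed convex cone $\mathbb{R}^n_+$ under the affine map $\lambda\mapsto\lambda u_{\mathrm{int}}+(1-\lambda)u$, and along this line membership in $\calF$ is equivalent to membership in $\mathbb{R}^n_+$) that contains an open neighborhood of $\lambda=1$ (since $\mathbb{R}^n_{++}$ is open and contains $u_{\mathrm{int}}$) but does not contain $\lambda=0$. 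Hence $\Lambda$ has a finite left endpoint $\lambda^\star$ with $\lambda^\star\in(0,1)$, and since $\|u-\tilde u\|=|\lambda|\,\|u-u_{\mathrm{int}}\|$ with $\lambda\ge\lambda^\star>0$ on $\Lambda$, the minimization in \eqref{ineq: local geometry to feasibility error ratio V_p 2} is attained uniquely at $\lambda=\lambda^\star$, so $w=\lambda^\star u_{\mathrm{int}}+(1-\lambda^\star)u$. Moreover, since $\lambda^\star=\min\Lambda$, a neighborhood of $w$ on the line to the side of $u$ is not in $\mathbb{R}^n_+$, so $w\notin\mathbb{R}^n_{++}$; as $w\in\mathbb{R}^n_+$ this means $w_j=0$ for some $j\in[n]$.

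Next I would bound the numerator and denominator of $\theta(u)=\dist(u,\calF)/\dist(u,\mathbb{R}^n_+)$ using the identity $u=\tfrac{1}{1-\lambda^\star}w-\tfrac{\lambda^\star}{1-\lambda^\star}u_{\mathrm{int}}$ (obtained by solving $w=\lambda^\star u_{\mathrm{int}}+(1-\lambda^\star)u$ for $u$, valid since $1-\lambda^\star>0$). For the numerator, $w\in\calF$ gives $\dist(u,\calF)\le\|u-w\|$, and the identity yields $u-w=\tfrac{\lambda^\star}{1-\lambda^\star}(w-u_{\mathrm{int}})$, so $\|u-w\|=\tfrac{\lambda^\star}{1-\lambda^\star}\|w-u_{\mathrm{int}}\|$. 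For the denominator, the same identity gives $u_j=-\tfrac{\lambda^\star}{1-\lambda^\star}(u_{\mathrm{int}})_j<0$, hence $\dist(u,\mathbb{R}^n_+)=\|u^-\|\ge -u_j=\tfrac{\lambda^\star}{1-\lambda^\star}(u_{\mathrm{int}})_j\ge\tfrac{\lambda^\star}{1-\lambda^\star}\min_i(u_{\mathrm{int}})_i$. Dividing these two bounds, the common factor $\tfrac{\lambda^\star}{1-\lambda^\star}$ cancels and I obtain the first inequality of \eqref{ineq: geo thm basic ineq 7}.

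The second inequality of \eqref{ineq: geo thm basic ineq 7} is then immediate: from $w-u_{\mathrm{int}}=(1-\lambda^\star)(u-u_{\mathrm{int}})$ with $\lambda^\star\in(0,1)$ we get $\|w-u_{\mathrm{int}}\|\le\|u-u_{\mathrm{int}}\|$, and dividing both sides by $\min_i(u_{\mathrm{int}})_i$ finishes it.

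I expect the only nonroutine part to be the first paragraph: verifying that $\lambda^\star\in(0,1)$ (so that $w$ lies strictly between $u$ and $u_{\mathrm{int}}$, making $\tfrac{\lambda^\star}{1-\lambda^\star}$ a well‑defined positive number and $\|w-u_{\mathrm{int}}\|>0$) and that $w$ has a vanishing coordinate. Both are geometrically obvious — travel from the infeasible point $u$ toward the strictly interior point $u_{\mathrm{int}}$ and stop the instant you enter $\mathbb{R}^n_+$ — but they are exactly the facts that make the cancellation in the second paragraph work, so they warrant a careful argument using convexity of $\mathbb{R}^n_+$ and openness of $\mathbb{R}^n_{++}$; everything after that is a two‑line computation.
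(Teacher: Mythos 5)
Your proposal is correct and follows the same strategy as the paper's proof: both arguments restrict attention to the line from $u$ through $u_{\mathrm{int}}$, identify the unique $\lambda^\star\in(0,1)$ at which the segment first enters $\mathbb{R}^n_+$, pick out a coordinate $j$ with $(\calF(u;u_{\mathrm{int}}))_j=0$, and reduce everything to the one scalar proportionality between $|u_j|$, $(u_{\mathrm{int}})_j$, $\|u-\calF(u;u_{\mathrm{int}})\|$, and $\|\calF(u;u_{\mathrm{int}})-u_{\mathrm{int}}\|$. You package that proportionality by writing $u$ as an affine combination of $w:=\calF(u;u_{\mathrm{int}})$ and $u_{\mathrm{int}}$ and extracting the common factor $\tfrac{\lambda^\star}{1-\lambda^\star}$ from numerator and denominator; the paper instead reads the common ratio $\tfrac{1-\lambda}{\lambda}=\tfrac{(u_{\mathrm{int}})_j}{|u_j|}=\tfrac{\|v-u_{\mathrm{int}}\|}{\|v-u\|}$ straight from the scalar equation $0=\lambda(u_{\mathrm{int}})_j+(1-\lambda)u_j$. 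These are algebraically the same step, just organized differently. The only substantive difference is that your first paragraph is noticeably more careful than the paper about why $\lambda^\star$ exists, lies in $(0,1)$, uniquely realizes the minimum in \eqref{ineq: local geometry to feasibility error ratio V_p 2}, and forces a vanishing coordinate of $w$ — the paper asserts all of that in a single sentence. That extra care is welcome; the rest is, as you say, a two-line computation.
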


\begin{proof}
	Suppose that $u_{\mathrm{int}}$ is any given strictly feasible point in $\calF_{++}$. Let $r :=\min_i (u_{\mathrm{int}})_i$, and so $r>0$. In the line segment connecting $u_{\mathrm{int}}$ and $u$, let $v :=\calF(u;u_{\mathrm{int}})$ defined in \eqref{ineq: local geometry to feasibility error ratio V_p 2}. Then because  $r > 0$ and $u \in V \setminus \calF$, there exists $\lambda \in (0,1)$ for which $v = \lambda u_{\mathrm{int}} + (1-\lambda)u$. Also we have $v \in \partial \mathbb{R}^n_+$ and there exists $i\in[n]$ such that $v_i = 0$, whereby $0 = v_i = \lambda (u_{\mathrm{int}})_i + (1 - \lambda) u_i$ and $u_i <0$ and so:
	\begin{equation}
		\frac{(u_{\mathrm{int}})_i}{|u_i|} = \frac{1-\lambda}{\lambda} = \frac{\|v - u_{\mathrm{int}}\|}{\|v - u\|} \ .
	\end{equation}
And since $(u_{\mathrm{int}})_i \ge  r$ it follows that 
	\begin{equation}\label{ineq: geo thm basic ineq 3}
		\frac{\|v - u\|}{ |u_i |}  =  \frac{\|v - u_{\mathrm{int}}\|}{(u_{\mathrm{int}})_i} \le \frac{\|v - u_{\mathrm{int}}\|}{r} \ .
	\end{equation}
	On the left-most term of \eqref{ineq: geo thm basic ineq 3} it follows from $u_i<0$ that 
	\begin{equation}\label{ineq: geo thm basic ineq 4}
		 \frac{\|v - u\|}{ |u_i |} \ge \frac{\|v - u\|}{\| (u)^- \|} = \frac{\|v - u\|}{ \dist(u,\mathbb{R}^n_+)}\ .
	\end{equation}
	Combining \eqref{ineq: geo thm basic ineq 3} and \eqref{ineq: geo thm basic ineq 4} yields
		\begin{equation}\label{ineq: local geometry to feasibility error ratio V_p 1}
		\frac{\| u - \calF(u;u_{\mathrm{int}}) \|}{\dist(u,\mathbb{R}^n_+)}  = \frac{\| u - v\|}{\dist(u,\mathbb{R}^n_+)}  \le  \frac{\|v - u_{\mathrm{int}}\|}{(u_{\mathrm{int}})_i} \le \frac{\|v - u_{\mathrm{int}}\|}{r} = \frac{\|\calF(u;u_{\mathrm{int}}) - u_{\mathrm{int}}   \|}{\min_i (u_{\mathrm{int}})_i} \ .
	\end{equation}
	Noting that the numerator of the right-most term of \eqref{ineq: local geometry to feasibility error ratio V_p 1}  is bounded by
\begin{equation}\label{ineq: geo thm basic ineq 6}
	\begin{aligned}
		\|\calF(u;u_{\mathrm{int}}) - u_{\mathrm{int}}\| &\  \le \|\calF(u;u_{\mathrm{int}})  - u_{\mathrm{int}}\|  + \|\calF(u;u_{\mathrm{int}}) - u\| =  \| u - u_{\mathrm{int}}\|  \ , \\
	\end{aligned}
\end{equation}
and the left-most term \eqref{ineq: local geometry to feasibility error ratio V_p 1} satisfies $ \frac{\| u - \calF(u;u_{\mathrm{int}}) \|}{\dist(u,\mathbb{R}^n_+)} \ge \theta(u)$, 
we therefore have  $\theta(u) \le \frac{ \| \calF(u;u_{\mathrm{int}}) - u_{\mathrm{int}}\|}{\min_i (u_{\mathrm{int}})_i}$.
Last of all, since $\|u - u_{\mathrm{int}}\| \ge \|\calF(u;u_{\mathrm{int}})-u_{\mathrm{int}}\|$, $\theta(u)$ is also further upper bounded by $\frac{ \| u - u_{\mathrm{int}}\|}{\min_i (u_{\mathrm{int}})_i}$, which completes the proof. \end{proof}

Lemma \ref{lm: local geometry to feasibility error ratio} shows that the error ratio $\theta(u)$ is upper-bounded by the ratio of the distance from $u$ to $u_{\mathrm{int}}$ to the distance of $u_{\mathrm{int}}$ to the boundary of the nonnegative orthant.  We now use Lemma \ref{lm: local geometry to feasibility error ratio} to prove Theorem \ref{thm generalized local geometry to feasibility error ratio at optima}.

\begin{proof}[Proof of Theorem \ref{thm generalized local geometry to feasibility error ratio at optima}]
	If $\calF_{++} = \emptyset$, then the right-hand side of \eqref{eq generalized local geometry to feasibility error ratio at optima} is equal to $+\infty$ so \eqref{eq generalized local geometry to feasibility error ratio at optima} is trivially true. 
We therefore consider the case when $\calF\neq \emptyset$. For any optimal solution $u^\star \in \calU^\star$ and a given associated strictly feasible solution $u_{\mathrm{int}}\in \calF_{++}$, let the   $\{u^k\}_{k=1}^{\infty}$ be a sequence in $V$ that converges to $u^\star$, whereby from Lemma \ref{lm: local geometry to feasibility error ratio} it holds that 
	$$
	\theta(u^k) \le \frac{\| u^k- u_{\mathrm{int}}\|}{\min_i(u_{\mathrm{int}})_i} \le \frac{\| u^\star - u_{\mathrm{int}}\|  + \| u^\star - u^k\| }{\min_i(u_{\mathrm{int}})_i} \ .
	$$
	Taking the limit as $k \rightarrow \infty$ on both sides, and noting that $\lim_{k\to \infty} \| u^\star - u^k \| = 0$, it thus follows that  $	 \operatorname{\lim\sup}_{k\to\infty} \theta(u^k) \le \frac{\| u^\star - u_{\mathrm{int}}\| }{\min_i(u_{\mathrm{int}})_i}$. And since $u_{\mathrm{int}}$ is any strictly feasible point, taking the infimum over all such $u_{\mathrm{int}}$ yields
	\begin{equation}\label{eq lm local geometry to feasibility error ratio at optima}
		\operatorname{\lim\sup}_{k\to\infty} \theta(u^k) \le \inf_{u_{\mathrm{int}}\in\calF_{++} }\frac{\| u^\star - u_{\mathrm{int}}\| }{\min_i(u_{\mathrm{int}})_i}  \ .
	\end{equation}
We now seek to prove:
	\begin{equation}\label{aclu}
	\thetau := \lim_{\eps \to 0} \sup_{u\in V, \, \dist(u,\calU^\star) \le \eps} \theta(u) \le \sup_{u^\star \in \calU^\star }  \inf_{u_{\mathrm{int}}\in\calF_{++} } \frac{\| u^\star - u_{\mathrm{int}}\| }{\min_i(u_{\mathrm{int}})_i} \ .
	\end{equation}
	If this were false, there would exist $\delta > 0$ and a sequence $\{\bar u^k\}_{k=1}^{\infty}$ in $V$ such that $\dist(\bar{u}^k, \calU^\star) \le 1/k$ and $\theta(\bar{u}^k) \ge \delta+ \sup_{u^\star \in \calU^\star }\inf_{u_{\mathrm{int}}\in\calF_{++} } \frac{\| u^\star - u_{\mathrm{int}}\| }{\min_i(u_{\mathrm{int}})_i} $. Note that the points in the sequence $\{\bar u^k\}_{k=1}^{\infty}$ all lie in the compact set $\{u: \dist(u,\calU^\star)\le 1\}$ as $\calU^\star$ is convex, closed and bounded. Therefore there exists a subsequence of $\{\bar u^k\}_{k=1}^{\infty}$ that converges to a limit point in $\calU^\star$. This violates \eqref{eq lm local geometry to feasibility error ratio at optima}, and so provides a contradiction, whereby \eqref{aclu} is true, thus completing the proof.
\end{proof}

Therefore, Theorem \ref{thm local geometry to feasibility error ratio at optima} follows as a special case of Theorem \ref{thm generalized local geometry to feasibility error ratio at optima}, when  \eqref{pro: genecal class of standard form LP} is taken to be \eqref{pro: general primal LP}.

\subsection{Proof of Proposition \ref{cusco}}
In this subsection, we prove Proposition \ref{cusco} by proving its generalization to \eqref{pro: genecal class of standard form LP}:
\begin{proposition}\label{generalized cusco} Suppose $u_a \in \calU^\star$ and there exists $R_a$ for which $\calU^\star \subset \{u: \|u-u_a \| \le R_a\}$, then it holds that $ \thetau \le G^\star$ for $G^\star$ defined as follows:
	\begin{equation}\label{generalized rrr}  
			G^\star := \ \inf_{r >0, \ u\in \mathbb{R}^n}  \displaystyle\frac{R_a+\|u-u_a\|}{r} \quad \operatorname{s.t.} \ u \in V , \ u \ge r \cdot e \ . 
	\end{equation} Furthermore, let $V  = \{ \hat{u} \in \mathbb{R}^n : A\hat{u} = b\}$, then 
	\begin{equation}\label{generalized rrrr}  
			G^\star := \ \min_{v\in \mathbb{R}^n, \ \alpha \in \mathbb{R}}  R_a \alpha+\|v- \alpha u_a \| \quad \operatorname{s.t.} \ Av = \alpha  b , \ v \ge e , \ \alpha \ge 0 \ . 
	\end{equation}
\end{proposition}

\begin{proof}
	From Theorem \ref{thm generalized local geometry to feasibility error ratio at optima} we have:
	\begin{equation}\begin{aligned}
	\thetau \ \ \le \ \ & \sup_{u^\star\in \calU^\star} \inf_{u_{\mathrm{int}} \in \calF_{++}}  \displaystyle\frac{\|u^\star - u_{\mathrm{int}}\|}{\min_i (u_{\mathrm{int}})_i} \ \ \le\  \ \sup_{u \in B(u_a, R_a)} \inf_{u_{\mathrm{int}} \in \calF_{++}}  \displaystyle\frac{\|u - u_{\mathrm{int}}\|}{\min_i (u_{\mathrm{int}})_i} \\ 
	 \ \ \le \ \ & \sup_{u \in B(u_a, R_a)} \inf_{u_{\mathrm{int}} \in \calF_{++}} \displaystyle\frac{\|u- u_a\| + \| u_a - u_{\mathrm{int}}\|}{\min_i (u_{\mathrm{int}})_i} \ \ \le \ \  \inf_{u_{\mathrm{int}} \in \calF_{++}}  \frac{R_a+ \| u_a - u_{\mathrm{int}}\|}{\min_i (u_{\mathrm{int}})_i} \\ 
	 \ \ = \ \ &  \inf_{r >0, \ u\in \mathbb{R}^n}  \displaystyle\frac{R_a+\|u-u_a\|}{r} \quad \operatorname{s.t.} \ u \in V , \ u \ge r \cdot e \ ,  \end{aligned}
	\end{equation} and notice that the final right-hand side is precisely $G^\star$, which proves \eqref{generalized rrr}.  Next notice that, if $V = \{\hat{u} \in \mathbb{R}^n:A\hat{u} =  b\}$, \eqref{generalized rrr} and \eqref{generalized rrrr} are equivalent via the elementary projective transformations $u = v/\alpha$ and $(v, \alpha)  = (u/r,  1/r)$ if we add the additional constraint $\alpha>0$ to \eqref{generalized rrrr}. However, since we are only interested in the optimal objective value of \eqref{generalized rrr} and \eqref{generalized rrrr}, solving the \eqref{generalized rrrr} yields the same optimal objective value as  \eqref{generalized rrr}. 
	\end{proof}

\subsection{Proofs of Theorem \ref{thm: general theta with infeasibility} and Corollary \ref{cor: theta with infeasibility}}

In this section, we prove Theorem \ref{thm: general theta with infeasibility} and Corollary \ref{cor: theta with infeasibility} by proving their generalizations to \eqref{pro: genecal class of standard form LP}. Suppose that $V$ in the generalized LP \eqref{pro: genecal class of standard form LP} is represented by $V=\{\hat{u}: Ku=h\}$ for $K \in \mathbb{R}^{m\times n}$ and $h\in\mathbb{R}^m$, and then the generalizations are as follows:
\begin{theorem}\label{thm: generalized general theta with infeasibility}
	Suppose that $\calF$ is nonempty for \eqref{pro: genecal class of standard form LP}. Then for every $u\in V \setminus \calF$, it holds that
	\begin{equation}\label{eq generalized thm general theta with infeasibility}
		\theta(u)  \le \frac{\|K\|(1 + \|u\|)}{\distinfeas(K,h)} \ .
	\end{equation}
\end{theorem}
\begin{corollary}\label{cor: generalized theta with infeasibility}
	Suppose that \eqref{pro: genecal class of standard form LP} has an optimal solution.  If $\distinfeas(K,h) > 0$, then it holds that
	\begin{equation}\label{eq generalized thm theta with infeasibility}
		\thetau \le  \frac{\|K\|(1 + \max_{u\in \calU^\star}\|u\|)}{\distinfeas(K,h)} \ .
	\end{equation}
\end{corollary}

We first prove Corollary \ref{cor: generalized theta with infeasibility} using Theorem \ref{thm: generalized general theta with infeasibility}:
\begin{proof}[Proof of Corollary \ref{cor: theta with infeasibility}]
	By the definition of $\thetau$ and Theorem \ref{thm: generalized general theta with infeasibility},  we have
	\begin{equation}
		\begin{aligned}
					\thetau & \ =   \lim_{\eps \to 0} \left( \sup_{u\in V, \hspace{.05cm}\dist(u, \calU^\star) \le \eps} \theta(u) \right)   \le  \lim_{\eps \to 0} \left( \sup_{u\in V, \hspace{.05cm}\dist(u, \calU^\star) \le \eps} 
					\left(
					\frac{\|K\|(1 + \|u\|)}{\distinfeas(K,h)} 
					\right)
					\right) 
					%\\ &  \ = \frac{1 + \lim_{\eps \to 0} \left( \sup_{u\in V, \hspace{.05cm}\dist(u, \calU^\star) \le \eps}   \|u\|\right)}{\distinfeas(K,h) } 
			\ , 
		\end{aligned}
	\end{equation}
	which is exactly \eqref{eq generalized thm theta with infeasibility}.
\end{proof}

Then we prove Theorem \ref{thm: generalized general theta with infeasibility} through the approach of constructing, for each $u\in V \setminus \calF$, a suitable perturbation $(\Delta K,\Delta h)$ of $(K,h)$ for which $\distinfeas(K + \Delta K, h + \Delta h)  = 0$ and $\theta(u) \le \frac{\|K\| ( 1 + \|u\|)}{\|\Delta K \| + \|\Delta h\|}$.  Before presenting the formal proof of the theorem, we establish several key properties of points $u\in V \setminus \calF$.

Let $\bar{u} \in V\setminus \calF$ be fixed and given, and let $\hat{u}$ be the projection of $\bar{u}$ onto $\calF$, denoted as $\hat{u}:= P_{\calF}(\bar{u})$. Then $\hat{u}$ solves the following convex quadratic program:
\begin{equation}\label{pro projection problem}
	\min_{u\in\mathbb{R}^n}\  \tfrac{1}{2} \|u - \bar{u}\|^2\  , \quad \operatorname{ s.t. } \ Ku = h, \ u\ge 0 \ , 
\end{equation}
whereby there exist multipliers $\hat y$ and $\hat s$ that together with $\hat u$ satisfy the KKT optimality conditions:
\begin{equation}\label{eq kkt}
	K \hat{u} = h, \ \hat{u}\ge 0, \ \hat{u} - \bar{u} = K^\top \hat{y} + \hat{s}, \ \hat{s} \ge 0, \ \hat{u}^\top \hat{s} =  0 \ .
\end{equation}
Note that since $\bar{u}\in V\setminus \calF$, then $K\bar{u} = K\hat{u} = h$. In addition, the following proposition holds for $(\hat{u},\hat{s},\hat{y})$:
\begin{proposition}\label{pr H seperates}
	 For any $u \in \mathbb{R}^n_+$ it holds that $
		-\|\hat{u} - \bar{u}\|^2 = \hat{s}^\top \bar{u} < \hat{s}^\top \hat{u} = 0 \le \hat{s}^\top u$.
\end{proposition}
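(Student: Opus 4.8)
The plan is to read off all three relations directly from the KKT system \eqref{eq kkt} together with the observation that $K\bar u = K\hat u = h$ (which holds because $\bar u \in V$ and $\hat u \in \calF \subset V$). The rightmost inequality $0 \le \hat s^\top u$ for $u \in \mathbb{R}^n_+$ is immediate, since $\hat s \ge 0$ and $u \ge 0$ force the inner product to be nonnegative. The middle equality $\hat s^\top \hat u = 0$ is exactly the complementary slackness condition $\hat u^\top \hat s = 0$ in \eqref{eq kkt}.

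The substantive part is the leftmost equality $-\|\hat u - \bar u\|^2 = \hat s^\top \bar u$. First I would use the stationarity condition $\hat u - \bar u = K^\top \hat y + \hat s$ to write $\hat s = (\hat u - \bar u) - K^\top \hat y$, and then compute
\[
\hat s^\top(\bar u - \hat u) \;=\; (\hat u - \bar u)^\top(\bar u - \hat u) - \hat y^\top K(\bar u - \hat u) \;=\; -\|\hat u - \bar u\|^2 - \hat y^\top K(\bar u - \hat u).
\]
Since $\bar u, \hat u \in V$ we have $K(\bar u - \hat u) = h - h = 0$, so the last term vanishes and $\hat s^\top(\bar u - \hat u) = -\|\hat u - \bar u\|^2$. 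Combining this with $\hat s^\top \hat u = 0$ gives $\hat s^\top \bar u = -\|\hat u - \bar u\|^2$, as claimed.

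Finally, the strict inequality $\hat s^\top \bar u < \hat s^\top \hat u = 0$ follows because $\bar u \in V \setminus \calF$ while $\hat u = P_{\calF}(\bar u) \in \calF$, so $\bar u \neq \hat u$ and hence $\|\hat u - \bar u\|^2 > 0$; thus $\hat s^\top \bar u = -\|\hat u - \bar u\|^2 < 0$. There is no real obstacle here — the whole argument is a short, careful manipulation of the optimality conditions of the projection problem \eqref{pro projection problem}, with the only point requiring attention being the cancellation of the $\hat y^\top K(\bar u - \hat u)$ term via $\bar u, \hat u \in V$.
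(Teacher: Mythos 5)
Your proof is correct and follows essentially the same route as the paper: both expand the stationarity condition $\hat u - \bar u = K^\top\hat y + \hat s$, cancel the $K^\top\hat y$ contribution using $K(\hat u - \bar u)=0$, invoke complementary slackness $\hat u^\top\hat s = 0$, and obtain the strict inequality from $\bar u\neq\hat u$. The only difference is cosmetic — you isolate $\hat s$ and compute $\hat s^\top(\bar u-\hat u)$, while the paper expands $\|\hat u-\bar u\|^2$ directly — but the ingredients and logic are identical.
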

\begin{proof} This first equality follows from \eqref{eq kkt} since $\|\hat{u} - \bar u \|^2 =  (K^\top \hat{y} + \hat{s})^\top (\hat{u} - \bar u)$,  $K(\hat{u} - \bar u) = 0$, and $\hat{u}^\top \hat{s} = 0$, whereby $\|\hat{u} - \bar{u}\|^2 = -\hat{s}^\top \bar{u}$.  The first inequality follows trivially since $\hat{s}^\top \bar{u} = -\|\hat{u}-\bar{u}\|^2< 0$ and $\hat{u}^\top \hat{s} = 0$. And the last inequality follows since $u \ge 0$ and $\hat s \ge 0$. \end{proof}
Let $H$ be the hyperplane defined as $H:=\{u: \hat{s}^\top u = 0\}$. Proposition \ref{pr H seperates} implies that $H$ separates $\bar{u}$ and $\mathbb{R}^n_+$, and $\hat{u} \in H$. We denote the projection of $\bar{u}$ onto $H$ as $\check{u}$, namely $\check{u} = P_{H}(\bar{u})$ which has closed form $\check{u} = \bar{u} - \frac{\hat{s}^\top \bar{u}}{\|\hat{s}\|^2} \cdot \hat{s}$.  For simplicity of exposition we use $a$ to denote $\check{u} - \bar{u}$ and use $b$ to denote $\hat{u} - \bar{u}$, namely
\begin{equation}\label{eq def ab}
	a  := \check{u} - \bar{u} = - \frac{\hat{s}^\top \bar{u}}{\|\hat{s}\|^2}\cdot \hat{s} \ , \quad b:= \hat{u} - \bar{u}= K^\top \hat{y} + \hat{s} \ .
\end{equation} 
\begin{proposition}\label{pr ab}
	For $a$ and $b$ defined in \eqref{eq def ab} it holds that 
	\begin{enumerate} [nosep]
		\item $a = \frac{\|b\|^2}{\|\hat{s}\|^2}\cdot \hat{s}$ , \label{claim 1}
		\item $\|b\| \ge  \|a\| > 0$ , and  \label{claim 2}
		\item $a - \frac{\|a\|^2}{\|b\|^2} \cdot b = K^\top w,$ where $ w:= -\frac{\|b\|^2}{\|\hat{s}\|^2}\cdot  \hat{y}$ .\label{claim 3}
	\end{enumerate}
\end{proposition}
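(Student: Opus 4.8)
The plan is to derive all three claims directly from the KKT relations \eqref{eq kkt}, the closed-form expressions in \eqref{eq def ab}, and Proposition \ref{pr H seperates}; no additional machinery is required. The one preliminary point to record is that the quantities $\|b\|$ and $\|\hat s\|$ appearing in the denominators are strictly positive: since $\bar u \in V\setminus\calF$ while $\hat u \in \calF$ we have $b := \hat u - \bar u \neq 0$, and then Proposition \ref{pr H seperates} gives $\hat s^\top \bar u = -\|b\|^2 < 0$, which in particular forces $\hat s \neq 0$. So every division below is legitimate.

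For claim \ref{claim 1}, I would substitute the identity $\hat s^\top \bar u = -\|\hat u - \bar u\|^2 = -\|b\|^2$ (Proposition \ref{pr H seperates}) into the closed form $a = -\frac{\hat s^\top \bar u}{\|\hat s\|^2}\hat s$ from \eqref{eq def ab}, obtaining immediately $a = \frac{\|b\|^2}{\|\hat s\|^2}\hat s$. For claim \ref{claim 2}: the strict inequality $\|a\| > 0$ is then clear, since $a$ is a strictly positive multiple of the nonzero vector $\hat s$. For $\|b\| \ge \|a\|$, the cleanest argument is geometric: $\|a\| = \|\check u - \bar u\| = \dist(\bar u, H)$ by definition of $\check u = P_H(\bar u)$, whereas $\hat u \in H$ because $\hat s^\top \hat u = 0$ by complementary slackness in \eqref{eq kkt}; hence $\dist(\bar u, H) \le \|\hat u - \bar u\| = \|b\|$. (Equivalently, from claim \ref{claim 1} one has $\|a\| = \|b\|^2/\|\hat s\|$, so this inequality is the same as $\|b\|\le\|\hat s\|$.)

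For claim \ref{claim 3}, I would first note that claims \ref{claim 1} and \ref{claim 2} together give $\|a\| = \|b\|^2/\|\hat s\|$, hence $\|a\|^2/\|b\|^2 = \|b\|^2/\|\hat s\|^2$ — the very scalar multiplying $\hat s$ in claim \ref{claim 1}. Therefore
\[
a - \frac{\|a\|^2}{\|b\|^2}\, b \;=\; \frac{\|b\|^2}{\|\hat s\|^2}\left(\hat s - b\right) \;=\; \frac{\|b\|^2}{\|\hat s\|^2}\left(-K^\top \hat y\right) \;=\; K^\top w,
\]
where the middle equality uses $b = K^\top \hat y + \hat s$ from \eqref{eq def ab} and the last is the definition $w := -\frac{\|b\|^2}{\|\hat s\|^2}\hat y$. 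I do not anticipate a genuine obstacle; the only care needed is to make the positivity of $\|\hat s\|$ and $\|b\|$ explicit before dividing, and to keep track of the common scalar $\|b\|^2/\|\hat s\|^2$ through the three claims.
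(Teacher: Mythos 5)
Your proof is correct, and for claims \ref{claim 2} and \ref{claim 3} you take a genuinely different and arguably cleaner route than the paper. For claim \ref{claim 1} the two arguments coincide: both substitute $\hat s^\top \bar u = -\|b\|^2$ (Proposition \ref{pr H seperates}) into the closed form for $a$. For claim \ref{claim 2}, the paper shows $\|\hat s\|^2 = \|b - K^\top\hat y\|^2 = \|b\|^2 + \|K^\top\hat y\|^2 \ge \|b\|^2$ (using $Kb = 0$ to kill the cross term), then concludes $\|a\|/\|b\| = \|b\|/\|\hat s\| \le 1$; you instead observe that $\|a\| = \dist(\bar u, H)$ by construction of $\check u = P_H(\bar u)$, that $\hat u \in H$ by complementary slackness, and hence $\|a\| \le \|\hat u - \bar u\| = \|b\|$. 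Both are fine — the paper's version is purely algebraic, yours is geometric and arguably more transparent, and as you note the two are the same inequality in disguise. The real improvement is in claim \ref{claim 3}: the paper first establishes the intermediate identity $\|a\|^2 = b^\top a$ (via a somewhat involved computation using $K(\hat s + K^\top\hat y) = 0$), and then does a further expansion to extract $K^\top w$. You instead notice that the scalar $\|a\|^2/\|b\|^2$ is exactly $\|b\|^2/\|\hat s\|^2$, the same coefficient appearing in claim \ref{claim 1}, so that $a - \frac{\|a\|^2}{\|b\|^2}b = \frac{\|b\|^2}{\|\hat s\|^2}(\hat s - b) = -\frac{\|b\|^2}{\|\hat s\|^2}K^\top\hat y = K^\top w$ in one line, sidestepping the intermediate lemma $\|a\|^2 = b^\top a$ entirely. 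This is a real simplification of the paper's argument, with no loss of rigor.
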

\begin{proof}
	To prove item \ref{claim 1}, note from Proposition \ref{pr H seperates} that
	$-\|b\|^2 = -\|\hat{u} - \bar{u}\|^2 = \hat{s}^\top \bar{u}$, whereby $a = - \frac{\hat{s}^\top \bar{u}}{\|\hat{s}\|^2}\cdot \hat{s} = \frac{\|b\|^2}{\|\hat{s}\|^2}\cdot \hat{s}$.
	
	To prove item \ref{claim 2}, notice that since $Kb = (K\hat{u}-K \bar u) = h-h = 0$, it follows that $\|\hat{s}\|^2 = \| b - K^\top \hat{y}\|^2= \|b\|^2 + \hat{y}^\top K K^\top \hat{y} \ge \|b\|^2$. And since we have from item \ref{claim 1} that $\|a\| = \frac{\|b\|^2}{\|\hat{s}\|}$, this implies $\|a\| / \|b\| = \|b\| / \|\hat s\| \le 1$ which proves the first inequality in item \ref{claim 2}.  To prove that $\|a\| >0$, note that we cannot have $\|b\|=0$ since $\bar{u} \in V\setminus \calF$ and $\hat u \in \calF$, and we cannot have $\hat s =0$, for otherwise Proposition \ref{pr H seperates} would also imply that $b =  \hat u - \bar u = 0$.  Therefore from item \ref{claim 1} we have $\|a\| >0$.

	To prove item \ref{claim 3},  we first show that $ \|a\|^2 = b^\top a$. From item \ref{claim 1} and the definition of $b$, we have $
		a^\top b  
		=
		\frac{\|b\|^2}{\|\hat{s}\|^2}\cdot \hat{s}^\top (\hat{s} + K^\top \hat{y} )$.
	Since 	$	K(\hat{s} + K^\top \hat{y}) = K \hat{u} - K \bar{u} = h-h = 0$, it follows that 
	\begin{equation}\label{eq a equa 3}
		\hat{s}^\top (\hat{s} + K^\top \hat{y} ) = \| \hat{s}\|^2 +  \hat{s}^\top  K^\top \hat{y}  + \hat{y}^\top K(\hat{s} + K^\top \hat{y}) = \| \hat{s} + K^\top \hat{y}\|^2 =  \|b\|^2 \ ,
	\end{equation}
	 and therefore $a^\top b = \frac{\|b\|^4}{\|\hat{s}\|^2} = \|a\|^2$ (from item \ref{claim 1}). This proves $ \|a\|^2 = b^\top a$ and then we have
	\begin{equation}\label{eq ab in subspace}
		\begin{aligned}
			a - \frac{\|a\|^2}{\|b\|^2} \cdot b & =a - \frac{b^\top a}{\|b\|^2} \cdot b  = \frac{\|b\|^2}{\|\hat{s}\|^2} \cdot \hat{s} - \frac{\|b\|^2 \cdot b^\top \hat{s}}{\|\hat{s}\|^2 \|b\|^2} \cdot   b =  \frac{\|b\|^2}{\|\hat{s}\|^2} \cdot \hat{s} - \frac{\|b\|^2 \cdot (\hat{s} + K^\top \hat{y})^\top \hat{s}}{\|\hat{s}\|^2 \|b\|^2} \cdot   (\hat{s} + K^\top \hat{y}) \\
			& =  \frac{\|b\|^2}{\|\hat{s}\|^2} \cdot \hat{s} \cdot \left(
			1 - \frac{(\hat{s} + K^\top \hat{y})^\top \hat{s}}{\|b\|^2} 
			\right) - \frac{ (\hat{s} + K^\top \hat{y})^\top \hat{s}}{\|\hat{s}\|^2 } \cdot    K^\top \hat{y} \ .
		\end{aligned}
	\end{equation}
	Here, the second equality follows from item \ref{claim 1}, and the third equality uses $b = \hat{s} + K^\top \hat{y}$.
	Substituting \eqref{eq a equa 3}  into \eqref{eq ab in subspace} yields $a - \frac{\|a\|^2}{\|b\|^2} \cdot b = -\frac{\|b\|^2}{\|\hat{s}\|^2}\cdot K^\top \hat{y} = K^\top w$, thus proving item \ref{claim 3}.
\end{proof}

With the above propositions established, we now prove Theorem \ref{thm: generalized general theta with infeasibility}.

\begin{proof}[Proof of Theorem \ref{thm: generalized general theta with infeasibility}] Let $\bar{u} \in V \setminus \calF$ be given. We will use all of the notation developed earlier in this subsection, including $\hat{u} = P_{\calF}(\bar u)$, the KKT multipliers $(\hat y, \hat s)$, $H:=\{u: \hat{s}^\top u = 0 \}$, and $\check u = P_{H}(\bar{u}) = \bar{u} - \frac{\hat{s}^\top \bar{u}}{\|\hat{s}\|^2} \cdot \hat{s}$. Additionally, let $a$ and $b$ be as given in \eqref{eq def ab} and $ w =  -\frac{\|b\|^2}{\|\hat{s}\|^2}\cdot  \hat{y}$ as specified in  Proposition \ref{pr ab}.

We first examine the case when $\hat y \ne 0$, and hence $ w \ne 0$. Let us consider the following perturbations of $K$ and $h$: 
	\begin{equation}\label{def perturbation}
		\Delta K := \frac{\|a\|^2}{\|w\|^2 \|b\|^2}\cdot w (b-a)^\top,\quad \Delta h := \Delta K \bar{u} -
		\varepsilon w \ ,  
	\end{equation}
where $\varepsilon >0$ is a small positive scalar. Then: 
	\begin{equation}\label{eq farkas 1}
		\begin{aligned}
		(K + \Delta K)^\top w & = K^\top w + \Delta K^\top w = a - \frac{\|a\|^2}{\|b\|^2} \cdot b  + \frac{\|a\|^2}{\|w\|^2 \|b\|^2}\cdot  (b-a) \cdot w^\top w  \\
		& = \left(
		1 - \frac{\|a\|^2}{\|b\|^2}
		\right)a  =  \left(
		1 - \frac{\|a\|^2}{\|b\|^2}
		\right) \cdot \frac{\|b\|^2 }{\|\hat{s}\|} \cdot \hat{s} \ge 0 \ ,
	\end{aligned}
	\end{equation}
	where the second and the fourth equalities are due to Proposition \ref{pr ab}, and the final inequality is also due to Proposition \ref{pr ab}.  Furthermore, we have	
	\begin{equation}\label{eq farkas 2}
		\begin{aligned}
	w^\top (h + \Delta h) = w^\top (K + \Delta K)\bar{u} - \varepsilon \|w\|^2 =  \left(
	1 - \frac{\|a\|^2}{\|b\|^2}
	\right) \cdot \frac{\|b\|^2 }{\|\hat{s}\|} \cdot \hat{s}^\top  \bar{u} - \varepsilon \|w\|^2 < 0 \ ,
	\end{aligned}
	\end{equation}
where the strict inequality follows since $\hat{s}^\top \bar{u} <0 $ from Proposition \ref{pr H seperates}, $\|a\| \le \|b\|$ from Proposition \ref{pr ab}, and $\|w\| >0$ by supposition for this case. Examining \eqref{eq farkas 1} and \eqref{eq farkas 2} yields
	$(K + \Delta K)^\top w  \ge 0$ and $	w^\top (h + \Delta h)  < 0$, which implies via Farkas' lemma that $\soln(K+\Delta K,h + \Delta h) = \emptyset$, and hence $\distinfeas(K,h) \le \|\Delta K\| + \|\Delta h\|$.  
	
Let us now bound the size of $\|\Delta K\|$ and $\|\Delta h\|$. From \eqref{def perturbation} we have 
	\begin{equation}\label{size of delta K}
		\|\Delta K \| \le \frac{\|a\|^2}{\|w\| \|b\|^2 }\cdot \| b - a\| \ .
	\end{equation}
Since $a - \frac{\|a\|^2}{\|b\|^2} \cdot b = K^\top w$, we also have $\|K\|  \ge { \left\| a - \frac{\|a\|^2}{\|b\|^2} \cdot b \right\| \cdot \frac{1}{\|w\|}} $.  Therefore
	\begin{equation}\label{size of K}
		\begin{aligned}
			\|K\|  &\ge \left\| \|a\| \cdot \frac{a}{\|a\|} - \frac{\|a\|^2}{\|b\|} \cdot \frac{b}{\|b\|} \right\| \cdot \frac{1}{\|w\|}  =  \left\|  \frac{\|a\|^2}{\|b\|}  \cdot \frac{a}{\|a\|} -\|a\| \cdot \frac{b}{\|b\|} \right\| \cdot \frac{1}{\|w\|}  = \frac{\|a\|}{\|b\|} \cdot \|b-a\| \cdot \frac{1}{\|w\|} \ , 
		\end{aligned}
	\end{equation}
where the first equality above follows from squaring both sides and rearranging terms using Proposition \ref{pr ab}. Combining \eqref{size of delta K} and \eqref{size of K} yields 
	$\frac{\| \Delta K \|}{\|K\|} \le \frac{\|a\|}{\|b\|} = \frac{\|\check{u} - \bar{u}\|} {\|\hat{u} - \bar{u}\|} =  \frac{\dist(\bar{u},H)}{\dist(\bar{u},\calF)}. $
	From Proposition \ref{pr H seperates}, because $H$ separates $\hat{u}$ and $\mathbb{R}^n_+$, it follows that $\dist(\bar{u},H) \le \dist(\bar{u}, \mathbb{R}^n_+)$, whereby $	\frac{\| \Delta K \|}{\|K\|} \le \frac{\dist(\bar{u},\mathbb{R}^n_+)}{\dist(\bar{u},\calF)} = \frac{1}{\theta(\bar{u})}$. Moreover, since $\Delta h := \Delta K \bar{u} - \varepsilon w$, it follows that $\|\Delta h \| \le \| \Delta K \| \cdot \|\bar{u}\| + \varepsilon \|w\| \le \frac{\|\bar{u}\|}{\theta(\bar{u})} \|K\| + \varepsilon \|w\|$.  Finally, we can add the inequalities $\theta(\bar{u}) \|\Delta K\| \le \| K \| $ and $\theta(\bar{u}) \| \Delta h \| \le \|u \| \| K\| + \theta(\bar u)  \varepsilon \|w\|$ which yields after rearranging $
	\theta(u) \le \frac{\|K\| \left( 1 + \|u\| + \eps \cdot \theta(\bar u)  \|w\| / \| K \| \right)}{\|\Delta K\| + \|\Delta h\|}\cdot$.  And since $\distinfeas(K,h) \le \|\Delta K\| + \|\Delta h\|$ we have $
	\theta(u)\le   \frac{\|K\| \left( 1 + \|u\| + \eps \cdot \theta(\bar u)  \|w\| / \| K \| \right)}{\distinfeas(K,h)}$. Taking the limit as $\varepsilon \rightarrow 0$ then proves the result in the case when $\hat y \ne 0$. 

Next we consider the case when  $\hat{y} = 0$. It follows from \eqref{eq kkt} that $\hat{u} - \bar{u} = \hat{s} $. Let $I := \{i: \hat{u}_i > 0 \}$ and $J := [n]\setminus I$. Then we have $\hat{s}_I = 0$ and $\hat{u}_I = \bar{u}_I$, and $\hat{s}_J = - \bar{u}_J$. This implies that $\hat{u} = \bar{u}^+ = P_{\mathbb{R}^n_+}(\bar{u})$, and hence $\dist(\bar{u},\calF) = \|\hat{u}-\bar{u}\|= \|\hat s\| = \dist(\bar{u},\mathbb{R}^n_+)$, and hence $\theta(\bar{u})  = \dist(\bar{u},\calF)/\dist(\bar{u},\mathbb{R}^n_+) = 1$. Now let $\Delta K = -K$, and for any $\varepsilon >0$ let $\Delta h$ be any vector satisfying $\|\Delta h\| \le \varepsilon$ and $h+\Delta h \ne 0$.  Then $(K + \Delta K, h + \Delta h) = (0, h + \Delta h)$ whereby $\soln(K + \Delta K, h + \Delta h)=\emptyset$.  Therefore $\distinfeas(K,h) \le \|\Delta K\| + \|\Delta h\| \le \|K\| + \varepsilon$ for all $\varepsilon >0$, and thus $\distinfeas(K,h) \le \|K\|$.  Finally, we have in this case that $\theta(\bar{u}) = 1 \le \frac{\|K\|}{\distinfeas(K,h)}
 \le \frac{\|K\|(1+\|\bar u \|)}{\distinfeas(K,h)}$, which completes the proof.		\end{proof}

Therefore, Theorem \ref{thm: general theta with infeasibility} and Corollary \ref{cor: theta with infeasibility} follows as special cases of Theorem \ref{thm: generalized general theta with infeasibility} and Corollary \ref{cor: generalized theta with infeasibility}, when  \eqref{pro: genecal class of standard form LP} is taken to be \eqref{pro: general primal LP}.

\section{Proofs for Section \ref{sec sharpness}}\label{app: proof of sharpness section}

\subsection{Proof of Theorem \ref{thm: sharpness and perturbation}}

In this subsection, we prove Theorem \ref{thm: sharpness and perturbation} by proving its generalization to the generic LP \eqref{pro: genecal class of standard form LP}. We let $\mu$ denote the LP sharpness of  \eqref{pro: genecal class of standard form LP}. The generalization is as follows:
\begin{theorem}\label{thm: generalized sharpness and perturbation} Consider the generic LP \eqref{pro: genecal class of standard form LP} under Assumption \ref{assump: general LP}, and let $\mu$ be the \LPsharp~of \eqref{pro: genecal class of standard form LP}. Then
	\begin{equation}\label{eq of thm: generalized sharpness and perturbation}
		\mu = \inf_{\Delta g}\left\{
		\frac{\|P_{\linV}(\Delta g)\|}{\|P_{\linV}(g)\|}:
		\opt(g + \Delta g,\calF) \ne \emptyset \ \text{ and }  \opt(g + \Delta g,\calF) \not\subset \opt(g,\calF)
		\right\} \ .
	\end{equation}
\end{theorem}

The proof of Theorem \ref{thm: generalized sharpness and perturbation} is divided into two parts, where each part proves an inequality version of \eqref{eq of thm: sharpness and perturbation} in one of the two possible directions of the inequality.  The following lemma proves the ``$\le$'' version of \eqref{eq of thm: sharpness and perturbation}.

\begin{lemma}\label{lm: sharpness upper bounded by perturbation} Consider the general LP problem \eqref{pro: genecal class of standard form LP} under Assumption \ref{assump: general LP}, and let $\mu$ be the \LPsharp~of \eqref{pro: genecal class of standard form LP}. Then
	\begin{equation}\label{eq of lm: sharpness upper bounded by perturbation}
		\mu \le \inf_{\Delta g}\left\{
		\frac{\|P_{\linV}(\Delta g)\|}{\|P_{\linV}(g)\|}:
		\opt(g + \Delta g,\calF) \ne \emptyset \ \text{ and }  \opt(g + \Delta g,\calF) \not\subset \opt(g,\calF)
		\right\} \ .
	\end{equation}
\end{lemma}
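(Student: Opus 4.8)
The plan is to show that \emph{every} admissible perturbation $\Delta g$ (meaning one with $\opt(g+\Delta g,\calF)\neq\emptyset$ and $\opt(g+\Delta g,\calF)\not\subset\opt(g,\calF)$) produces a single feasible, non-optimal point at which the ratio defining $\mu$ is already at most $\|P_{\linV}(\Delta g)\|/\|P_{\linV}(g)\|$; since $\mu$ is the infimum of that ratio over all such points, and we then take an infimum over all admissible $\Delta g$, this yields \eqref{eq of lm: sharpness upper bounded by perturbation}. If there is no admissible $\Delta g$ the right-hand side equals $+\infty$ and there is nothing to prove, so fix an admissible $\Delta g$ and pick $\bar u\in\opt(g+\Delta g,\calF)$ with $\bar u\notin\opt(g,\calF)=\calU^\star$.

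First I would extract two elementary facts. Since $\bar u$ minimizes $(g+\Delta g)^\top u$ over $u\in\calF$ and $\calU^\star\subset\calF$, for any $u^\star\in\calU^\star$ we have $(g+\Delta g)^\top(u^\star-\bar u)\ge 0$, which rearranges to $\Delta g^\top(u^\star-\bar u)\ge g^\top\bar u - f^\star$, where $f^\star$ is the optimal value of \eqref{pro: genecal class of standard form LP}. Since $\bar u$ is feasible but not optimal for $g$, we also have $g^\top\bar u - f^\star>0$; in particular $\bar u\in\calF\setminus\calU^\star$, so $\bar u$ is a legitimate test point in the infimum defining $\mu$, and $\dist(\bar u,\calU^\star)>0$ because $\calU^\star$ is a nonempty closed set.

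Next I would combine these with Remark~\ref{remark: mu distance computation}. Because $\bar u\in V$, that remark gives $\dist(\bar u,V\cap H^\star)=(g^\top\bar u - f^\star)/\|P_{\linV}(g)\|$. Taking $u^\star := P_{\calU^\star}(\bar u)$, so that $u^\star-\bar u\in\linV$ (both points lie in the affine subspace $V$), the inequality above yields $g^\top\bar u - f^\star \le \Delta g^\top(u^\star-\bar u) = P_{\linV}(\Delta g)^\top(u^\star-\bar u) \le \|P_{\linV}(\Delta g)\|\cdot\dist(\bar u,\calU^\star)$, where the middle equality drops the component $P_{\linV^\bot}(\Delta g)$, which is orthogonal to $\linV$, and the last step is Cauchy--Schwarz together with $\|u^\star-\bar u\|=\dist(\bar u,\calU^\star)$. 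Dividing through by $\|P_{\linV}(g)\|\cdot\dist(\bar u,\calU^\star)$ (both positive) gives $\dist(\bar u,V\cap H^\star)/\dist(\bar u,\calU^\star) \le \|P_{\linV}(\Delta g)\|/\|P_{\linV}(g)\|$, hence $\mu\le\|P_{\linV}(\Delta g)\|/\|P_{\linV}(g)\|$; taking the infimum over admissible $\Delta g$ finishes the proof.

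I do not anticipate a genuine obstacle. The two points that need care are: (i) certifying that the witness $\bar u$ really is admissible in the definition of $\mu$, namely $\bar u\in\calF\setminus\calU^\star$ with $\dist(\bar u,\calU^\star)>0$ — this is exactly where the hypothesis ``$\opt(g+\Delta g,\calF)\not\subset\opt(g,\calF)$'' is used, via $g^\top\bar u - f^\star>0$; and (ii) keeping track that only the $\linV$-component of $\Delta g$ enters the bound, which is what forces $\|P_{\linV}(\Delta g)\|$ rather than $\|\Delta g\|$ to appear, handled by $u^\star-\bar u\in\linV$ and the orthogonal decomposition $\Delta g = P_{\linV}(\Delta g)+P_{\linV^\bot}(\Delta g)$.
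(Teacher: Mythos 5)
Your proof is correct and takes essentially the same route as the paper's: both pick the witness $\bar u\in\opt(g+\Delta g,\calF)\setminus\calU^\star$, derive $\Delta g^\top(\hat u-\bar u)\ge g^\top\bar u-f^\star$ from the optimality of $\bar u$ for the perturbed objective, project onto $\linV$ using $\hat u-\bar u\in\linV$, and finish with Cauchy--Schwarz. The only cosmetic difference is that you invoke Remark~\ref{remark: mu distance computation} directly for $\dist(\bar u,V\cap H^\star)$, whereas the paper carries out the equivalent computation via the explicit formula for $\check u-\bar u$.
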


\begin{proof}
	Let $\Delta g $ satisfy $\opt(g + \Delta g,\calF) \ne \emptyset$ and $\opt(g + \Delta g,\calF) \not\subset \opt(g,\calF)$, and let $\bar{u}  \in \opt(g + \Delta g,\calF) \setminus \opt(g,\calF)$. Denote the optimal objective value hyperplane by $H^\star := \{u: g^\top u = f^\star\}$. Let $\check{u} := P_{V \cap H^\star}(\bar u) = \bar u - \frac{g^\top \bar{u} - f^\star}{\|P_{\linV}(g)\|^2}\cdot P_{\linV}(g)$ and $\hat{u} := P_{\calU^\star}(\bar u)$. 
	For simplicity, we use the notation $a :=\check{u} - \bar{u}$ and $b :=\hat{u} - \bar{u}$. From Definition \ref{def: sharpness mu} we have:
	\begin{equation}\label{eq  sharp upper bounded by perturb 1}
	 \mu \le  \frac{\dist(\bar{u},V\cap H^\star)}{\dist(\bar{u},\calU^\star)} = \frac{\| \check{u}  - \bar{u}\|}{\|\hat{u} - \bar{u}\| }  = \frac{\|a\|}{\|b\|}	   \ .  
	\end{equation} 
	Our goal then is to prove that 
	\begin{equation}\label{eq  sharp upper bounded by perturb 2}
		\frac{\| a\|}{\|b\| } \le \frac{\|P_{\linV}(\Delta g)\|}{\|P_{\linV}(g)\|}  \ ,
	\end{equation} 
 and combining \eqref{eq  sharp upper bounded by perturb 2} with \eqref{eq  sharp upper bounded by perturb 1} will yield the proof. 

	Because $\check{u} \in H^\star$ and $ \hat{u} \in H^\star$, we have $g^\top \check{u} = g^\top \hat{u}$, which implies that $g^\top a = g^\top b$. Furthermore, since $\bar{u} \in \opt(g + \Delta g,\calF)$, we have $(g + \Delta g)^\top \bar{u} \le (g + \Delta g)^\top \hat{u}$, which implies that $(g + \Delta g)^\top b \ge 0$.
Substituting $g^\top a = g^\top b$ into $(g + \Delta g)^\top b \ge 0$, we obtain $ \Delta g^\top b \ge - g^\top a $. It follows directly from Assumption \ref{assump: general LP} that $\|P_{\linV}(g)\| > 0$, and dividing both sides of this last inequality by $\|P_{\linV}(g)\|$ yields 
	 \begin{equation}\label{eq  sharp upper bounded by perturb 5} 
	 	\left(\frac{\Delta g}{\|P_{\linV}(g)\|}\right)^\top b \ge - \left(\frac{ g}{\|P_{\linV}(g)\|}\right)^\top a\ .
	 \end{equation} 
	Regarding the right-hand side of \eqref{eq  sharp upper bounded by perturb 5}, note that $\check{u} = u - \frac{g^\top \bar{u} - f^\star}{\|P_{\linV}(g)\|^2}\cdot P_{\linV}(g)$ and $a = - \frac{g^\top \bar{u} - f^\star}{\|P_{\linV}(g)\|^2}\cdot P_{\linV}(g)$, whereby:
	\begin{equation}\label{eq  sharp upper bounded by perturb 6} 
		 - \left(\frac{ g}{\|P_{\linV}(g)\|}\right)^\top a  = - \left(\frac{ P_{\linV}(g)}{\|P_{\linV}(g)\|}\right)^\top a= \|a\| \ .
	\end{equation} 
Regarding the left-hand side of \eqref{eq  sharp upper bounded by perturb 5}, since $b = \hat{u} - \bar{u} \in \linV$, it follows that $(\Delta g)^\top b = (P_{\linV}(\Delta g))^\top b\le \| P_{\linV}(\Delta g)\| \| b\|$.  Substituting this inequality and  \eqref{eq  sharp upper bounded by perturb 6} back into \eqref{eq  sharp upper bounded by perturb 5} yields \eqref{eq  sharp upper bounded by perturb 2}, which as noted earlier combines with \eqref{eq  sharp upper bounded by perturb 1} to complete the proof. 
\end{proof}

Before proving the ``$\ge$'' direction, we first establish a simple proposition.  For a convex set $\calS$ let $C_\calS$ denote the recession cone of $S$, and let $C^*_{\calS}$ denote the corresponding (positive) dual cone.
\begin{proposition}\label{pr: direction u Pstaru}
	Let $\bar u \in \calF \setminus \calU^\star$, and let $\hat{u} := P_{\calU^\star}(\bar u)$, then 
	\begin{itemize}[nosep]
		\item $(\hat{u} - \bar u)^\top (u^\star - \hat{u}) \ge 0 \text{ for any } u^\star \in \calU^\star $, and \label{lm item 1}
		\item $\hat{u} - \bar u \in C_{\calU^\star}^*$ . \label{lm item 2}
	\end{itemize}
\end{proposition}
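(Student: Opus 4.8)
The plan is to obtain both items from the standard variational (obtuse‑angle) characterization of the Euclidean projection onto a closed convex set. First I would note that, under Assumption~\ref{assump: general LP}, the optimal solution set $\calU^\star$ is a nonempty closed convex set (it is a face of the polyhedron $\calF$), so $\hat u := P_{\calU^\star}(\bar u)$ is well defined and is the unique point of $\calU^\star$ satisfying $(\bar u - \hat u)^\top(u^\star - \hat u) \le 0$ for every $u^\star \in \calU^\star$. Negating this inequality gives exactly the first bullet, namely $(\hat u - \bar u)^\top(u^\star - \hat u) \ge 0$ for all $u^\star \in \calU^\star$; that part therefore needs nothing beyond quoting this well‑known projection property.

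For the second bullet I would bootstrap from the first. Let $d \in C_{\calU^\star}$ be an arbitrary recession direction of $\calU^\star$. By the definition of the recession cone, and since $\hat u \in \calU^\star$, the point $\hat u + d$ again lies in $\calU^\star$. Substituting $u^\star := \hat u + d$ into the inequality just established gives $(\hat u - \bar u)^\top d = (\hat u - \bar u)^\top\big((\hat u + d) - \hat u\big) \ge 0$. Since $d$ ranges over all of $C_{\calU^\star}$, this is precisely the statement that $\hat u - \bar u$ lies in the dual cone $C_{\calU^\star}^*$, which is the claim.

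I do not anticipate any real obstacle here: the whole proposition is a routine consequence of the projection theorem, with the second part being just the first part applied along translates of $\hat u$ by recession directions. The only point to be slightly careful about is that $\calU^\star$ must be closed and convex for the projection characterization to apply, but this is immediate for the optimal solution set of a solvable LP under Assumption~\ref{assump: general LP}.
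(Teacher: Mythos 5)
Your proof is correct and follows essentially the same route as the paper: invoke the obtuse-angle characterization of the Euclidean projection for the first bullet, then deduce the second bullet by applying the first along recession directions of $\calU^\star$. Your variant for the second part is marginally cleaner (you take $u^\star = \hat u + d$ directly, so no limiting argument in $\lambda$ is needed), but the underlying idea is identical.
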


\begin{proof}
	The first assertion follows directly from the optimality conditions for the projection of $\bar u$ onto $\calU^\star$. For the second assertion, observe that for any $v \in C_{\calU^\star}$ and any $u\in \calU^\star$ we have $u + \lambda v \in \calU^\star$ for all $\lambda \ge 0$, whereby it follows from the first assertion that $(\hat{u} - \bar u)^\top (u + \lambda v  - \hat{u}) \ge 0$ for all $\lambda \ge 0$ and hence $(\hat{u} - \bar u )^\top v \ge 0$.  Since $v$ is an arbitrary point in $C_{\calU^\star}$ it holds that $\hat{u} - \bar u \in  C_{\calU^\star}^*$ .\end{proof}

\begin{lemma}\label{lm: sharpness lower bounded by perturbation new} Consider the general LP problem \eqref{pro: genecal class of standard form LP} under Assumption \ref{assump: general LP}, and let $\mu$ be the \LPsharp~of \eqref{pro: genecal class of standard form LP}. Then
	\begin{equation}\label{eq of lm: sharpness lower bounded by perturbation-sunday}
		\mu \ge \inf_{\Delta g}\left\{
		\frac{\|P_{\linV}(\Delta g)\|}{\|P_{\linV}(g)\|}:
		\opt(g + \Delta g,\calF) \ne \emptyset \ \text{ and }  \opt(g + \Delta g,\calF) \not\subset \opt(g,\calF)
		\right\} \ .
	\end{equation}
\end{lemma}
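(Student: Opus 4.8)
The plan is to establish the reverse inequality pointwise: I will show that for \emph{every} $\bar u\in\calF\setminus\calU^\star$ the infimum on the right-hand side of \eqref{eq of lm: sharpness lower bounded by perturbation-sunday} is at most $\dfrac{\dist(\bar u,V\cap H^\star)}{\dist(\bar u,\calU^\star)}$; taking the infimum over $\bar u$ and comparing with Definition~\ref{def: sharpness mu} then gives $\mu\ge\inf_{\Delta g}\{\dots\}$. So fix $\bar u\in\calF\setminus\calU^\star$ and reuse the notation of Lemma~\ref{lm: sharpness upper bounded by perturbation}: $p:=P_{\linV}(g)$ (with $\|p\|>0$), $\hat u:=P_{\calU^\star}(\bar u)$, $\check u:=P_{V\cap H^\star}(\bar u)$, $a:=\check u-\bar u$, $b:=\hat u-\bar u$. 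As in that proof, $a,b\in\linV$, $g^\top b=p^\top b=-\|a\|\,\|p\|$, $\|a\|=\dist(\bar u,V\cap H^\star)$, $\|b\|=\dist(\bar u,\calU^\star)$, and by Proposition~\ref{pr: direction u Pstaru} we have $b\in C_{\calU^\star}^{*}$ and $\hat u\in\arg\min_{u\in\calU^\star}b^\top u$.

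The candidate perturbation is
\[
\Delta g^\star\ :=\ \frac{\|a\|\,\|p\|}{\|b\|^{2}}\,b\ \in\ \linV ,
\]
which already has the right magnitude: since $P_{\linV}(\Delta g^\star)=\Delta g^\star$, we get $\|P_{\linV}(\Delta g^\star)\|/\|P_{\linV}(g)\|=\|a\|/\|b\|$. Its key algebraic property is $(g+\Delta g^\star)^\top b=g^\top b+\|a\|\,\|p\|=0$, so $\bar u$ and $\hat u$ have equal $(g+\Delta g^\star)$-value, and since $\Delta g^\star$ is a positive multiple of $b$, the point $\hat u$ minimizes $(g+\Delta g^\star)^\top u$ over $\calU^\star$ as well. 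The obstacle is that $g+\Delta g^\star$ need not be bounded below on $\calF$, so I will use a shrunk multiple. Let $C_\calF$ be the recession cone of $\calF$ with dual cone $C_\calF^{*}$ (so a linear objective $c'$ is bounded below on $\calF$ iff $c'\in C_\calF^{*}$), and set $t^\star:=\max\{t\in[0,1]:g+t\Delta g^\star\in C_\calF^{*}\}$; this maximum is attained since $C_\calF^{*}$ is a closed convex cone containing $g$. I claim $t^\star>0$: writing $C_\calF=\mathrm{cone}\{a_1,\dots,a_L\}$, the generators with $g^\top a_i=0$ generate the face $C_\calF\cap g^{\perp}=C_{\calU^\star}$, hence for these $b^\top a_i\ge 0$, so $(g+t\Delta g^\star)^\top a_i\ge 0$ for all $t\ge0$; for the remaining generators $g^\top a_i>0$, so $(g+t\Delta g^\star)^\top a_i\ge 0$ for all sufficiently small $t>0$. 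Hence $g+t\Delta g^\star\in C_\calF^{*}$ for small $t>0$.

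Now set $\Delta g:=t^\star\Delta g^\star$. Then $\|P_{\linV}(\Delta g)\|/\|P_{\linV}(g)\|=t^\star\|a\|/\|b\|\le\|a\|/\|b\|$, and $g+\Delta g\in C_\calF^{*}$, so $\opt(g+\Delta g,\calF)\neq\emptyset$; let $u'$ attain the minimum. If $u'\notin\calU^\star$ we are done, since then $\opt(g+\Delta g,\calF)\not\subset\calU^\star$. Otherwise $u'\in\calU^\star$, and because $\hat u$ minimizes $b^\top u$ over $\calU^\star$ the minimum value equals $(g+\Delta g)^\top\hat u$. If $t^\star=1$, then $(g+\Delta g)^\top\hat u=(g+\Delta g)^\top\bar u$, so $\bar u\in\opt(g+\Delta g,\calF)$ with $\bar u\notin\calU^\star$, done. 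If $t^\star<1$, then for each $t\in(t^\star,1]$ some generator of $C_\calF$ is negatively scored by $g+t\Delta g^\star$; by the pigeonhole principle over the finitely many generators and a sequence $t_k\downarrow t^\star$, there is a generator $a_{i_0}$ with $(g+t^\star\Delta g^\star)^\top a_{i_0}=0$ (using $g+t^\star\Delta g^\star\in C_\calF^{*}$ and $a_{i_0}\in C_\calF$) and $\Delta g^{\star\top}a_{i_0}<0$, whence $b^\top a_{i_0}<0$ and $g^\top a_{i_0}=-t^\star\Delta g^{\star\top}a_{i_0}>0$. For $\lambda>0$ the point $\hat u+\lambda a_{i_0}$ lies in $\calF$, has $(g+\Delta g)$-value equal to $(g+\Delta g)^\top\hat u$ (the minimum), and has $g$-value $f^\star+\lambda g^\top a_{i_0}>f^\star$, so it is optimal for $g+\Delta g$ but not in $\calU^\star$; hence again $\opt(g+\Delta g,\calF)\not\subset\calU^\star$. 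In every case $\Delta g$ is admissible for the infimum in \eqref{eq of lm: sharpness lower bounded by perturbation-sunday} with ratio at most $\|a\|/\|b\|$, and taking the infimum over $\bar u$ completes the proof.

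I expect the main difficulty to be exactly this handling of potential unboundedness of $g+\Delta g^\star$: showing $t^\star>0$ and, when $t^\star<1$, producing an optimal point of the perturbed LP outside $\calU^\star$. Both rely on polyhedrality — the finiteness of the generators of $C_\calF$, the fact that the generators fixing $g$ generate $C_{\calU^\star}$, and the observation that $\Delta g^\star$ points into the tangent cone $C_{\calU^\star}^{*}$ of $C_\calF^{*}$ at $g$ — and this bookkeeping is where care is needed. When $\calU^\star$ is bounded, $C_{\calU^\star}=\{0\}$, $g$ lies in the interior of $C_\calF^{*}$, so $t^\star=1$ and one simply takes $\Delta g=\Delta g^\star$.
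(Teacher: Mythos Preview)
Your proof is correct and uses the same perturbation direction as the paper: both take $\Delta g$ to be a nonnegative multiple of $b=\hat u-\bar u$ (the paper writes this as $\bar b:=\frac{-g^\top b}{\|b\|^2}b$, which coincides with your $\Delta g^\star$ since $-g^\top b=\|a\|\,\|p\|$). The difference is in how the scaling factor is chosen.

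The paper defines $t$ as the largest $\tau$ for which $\hat u$ remains \emph{optimal} for the perturbed objective, and reformulates this as a finite family of linear inequalities over all extreme points and all extreme rays of $\calF$. It then argues that the constraints coming from $\calU^\star$-vertices and $\calU^\star$-rays never bind (using Proposition~\ref{pr: direction u Pstaru}), so the active constraint at $\tau=t$ must correspond to some $v^i\in\calE2$ or $r^i\in\calR2$, which immediately furnishes an optimal point of the perturbed LP outside $\calU^\star$. You instead define $t^\star$ as the largest $t\in[0,1]$ for which the perturbed objective remains \emph{bounded below} on $\calF$, i.e.\ $g+t\Delta g^\star\in C_\calF^{*}$; this only involves the recession cone and is therefore lighter machinery. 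The price is the extra case analysis you carry out at the end: having established that the perturbed LP has a minimizer, you still need to manufacture one outside $\calU^\star$, which you do via $\bar u$ when $t^\star=1$ and via $\hat u+\lambda a_{i_0}$ when $t^\star<1$. Both arguments lean on the same key fact $b\in C_{\calU^\star}^{*}$; your observation that the generators of $C_\calF$ annihilated by $g$ generate the face $C_\calF\cap g^{\perp}=C_{\calU^\star}$ plays the role that the paper's split $\calE1/\calE2$, $\calR1/\calR2$ plays there. Net effect: your route avoids enumerating extreme points of $\calF$ at the cost of a short additional case split, while the paper's route yields the witness point directly from the binding constraint.
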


\begin{proof} Note that by setting $\Delta g := -g$ that $ \opt(g + \Delta g,\calF)  = \calF \not\subset \opt(g,\calF) = \calU^\star$ under Assumption \ref{assump: general LP}, and therefore the right-hand side of 
\eqref{eq of lm: sharpness lower bounded by perturbation-sunday} is at most $1$.  Recall from the definition of LP sharpness that $\mu \le 1$.  Therefore in the special case when $\mu = 1$ then \eqref{eq of lm: sharpness lower bounded by perturbation-sunday} holds trivially.
	
Let us therefore consider the case $\mu < 1$.  For any given $\bar{u} \in \calF \setminus \calU^\star$, we will construct a perturbation $\Delta g$ for which $\opt(g + \Delta g,\calF) \ne \emptyset$ and $\opt(g + \Delta g,\calF) \not\subset \opt(g,\calF)$, and 
\begin{equation}\label{running}
\frac{\|P_{\linV}(\Delta g)\|}{\|P_{\linV}(g)\|} \le \frac{\dist(\bar{u},V\cap H^\star)}{\dist(\bar{u},\calU^\star)} \ , \end{equation} which then implies \eqref{eq of lm: sharpness lower bounded by perturbation-sunday}.  We proceed as follows. Let $\bar{u} \in \calF \setminus \calU^\star$ be given, and define $\check{u}:= P_{V\cap H^\star} (\bar u) = \bar u - \frac{g^\top \bar{u} - f^\star}{\|P_{\linV}(g)\|^2}\cdot P_{\linV}(g)$ and $\hat{u}:= P_{\calU^\star}(\bar{u})$. Similar to the notation used in the proof of Lemma \ref{lm: sharpness upper bounded by perturbation}, let $a := \check{u} - \bar{u}$ and $b :=\hat{u} -\bar{u}$.

%, and from the Definition \ref{def: sharpness mu} of the \LPsharp~$\mu$, in order to prove \eqref{eq of lm: sharpness upper bounded by perturbation}, we only need to construct a perturbation $\Delta g  \in C_{\calU^\star}^*$ such that  $\opt(g + \Delta g,\calF) \not\subset \opt(g,\calF)$ and 
	%\begin{equation}\label{eq sharpness lower bounded by perturbation new 0}
	%	\frac{\|P_{\linV}(\Delta g)\|}{\|P_{\linV}(g)\|} \le \frac{\|a\|}{\|b\|} \ .
	%\end{equation}
	% Such a perturbation indicates \eqref{eq of lm: sharpness upper bounded by perturbation}.

To construct the perturbation $\Delta g$ we first define
	\begin{equation}\label{eq sharpness lower bounded by perturbation new 1}
		\bar{b}:= \frac{-g^\top b}{\|b\|^2} \cdot b
	\end{equation}
and construct the perturbation $\Delta g := t \cdot \bar{b} $, where $t$ is the optimal objective value of the following optimization problem:
	\begin{equation}\label{eq sharpness lower bounded by perturbation new 2}
	t \ := \  \max_{\tau} \   \tau\  \quad  \operatorname{s.t.} \ \hat u \in \opt(g + \tau \bar{b}, \calF) \ .
	\end{equation} 
We aim to show that $t \in (0,1]$.  Towards the proof of this inclusion, let $\calE1$ be the set of extreme points of $\calF$ that are in $\calU^\star$, and let $\calE2$ be the set of extreme points of $\calF$ that are not in $\calU^\star$.  Similarly, let $\calR1$ be the set of extreme rays of $\calF$ that are also extreme rays of $\calU^\star$, and let $\calR2$ be the set of extreme rays of $\calF$ that are not also extreme rays of $\calU^\star$. Note that $\calE1$ and $\calE2$ are finite sets, and $\calR1$ and $\calR2$ are also finite sets.  We can therefore rewrite \eqref{eq sharpness lower bounded by perturbation new 2} as:
	\begin{align}
		\ \ \ \ \ \text{OP}:	\ \ \ \ \ t \ := \  \max_{\tau} &  \  \  \ \tau \ \quad \\
		\operatorname{s.t.} \ & \ \ \ 	\tau  \cdot \bar{b}^\top (\hat{u} - v^i)  & \le &  -  g^\top (\hat{u} - v^i)     \ \ \ \ \ \ \text{for each } v^i \in \calE1   \label{eq sharpness lower bounded by perturbation new 3} \\
		 \ & \ \ \ 	\tau  \cdot \bar{b}^\top (\hat{u} - v^i) &  \le &  -  g^\top (\hat{u} - v^i)     \ \ \ \ \ \ \text{for each } v^i \in \calE2   \label{eq sharpness lower bounded by perturbation new 33} \\
	\ & \ \ \ 	 \tau \cdot  \bar{b}^\top r^i & \ge & -   g^\top r^i    \ \ \ \ \ \ \  \ \ \ \ \ \ \ \text{for each } r^i \in \calR1   \label{eq sharpness lower bounded by perturbation new 4} \\ 
		\ & \ \ \ 	 \tau \cdot  \bar{b}^\top r^i & \ge & -   g^\top r^i    \ \ \ \ \ \ \  \ \ \ \ \ \ \ \text{for each } r^i \in \calR2   \label{eq sharpness lower bounded by perturbation new 44}	\end{align}

First observe that $-g^\top b  = -g^\top (\hat{u} -\bar{u})>0$, whereby $\bar b$ is a positive scaling of $b$.  Also notice that $\tau = 0$ is feasible for OP, because $\hat u \in \opt(g ,\calF)$. It implies that the right-hand sides of \eqref{eq sharpness lower bounded by perturbation new 3} and \eqref{eq sharpness lower bounded by perturbation new 33} are nonnegative and the right-hand sides of \eqref{eq sharpness lower bounded by perturbation new 4} and \eqref{eq sharpness lower bounded by perturbation new 44} are nonpositive. 
Next we show that $t \le 1$. To see this, note that if $\tau >1$ then 
	$$
	 (g + \tau  \cdot \bar{b})^\top (\hat{u} - \bar u)  =  ( g + \tau  \cdot \bar{b})^\top b  = (g +  \bar{b})^\top b + (\tau - 1) \bar{b}^\top  b  = (\tau - 1) \cdot (-g^\top b) > 0 \ ,
	$$
whereby $\hat u \notin \opt(g + \tau \bar b ,\calF)$ and thus $\tau$ is not feasible for \eqref{eq sharpness lower bounded by perturbation new 2}. 
	
It thus remains to show that $t >0 $, which we will demonstrate by examining the constraints of OP.  Examining the constraints \eqref{eq sharpness lower bounded by perturbation new 3}, when $v^i \in \calE1$ the corresponding right-hand side is equal to $0$ while $  \bar{b}^\top(\hat{u} - v^i) \le 0$ (from Proposition \ref{pr: direction u Pstaru}), so these constraints are satisfied for all $\tau \ge 0$. Examining the constraints \eqref{eq sharpness lower bounded by perturbation new 33}, when $v^i \in \calE2$ the corresponding right-hand side is strictly positive so \eqref{eq sharpness lower bounded by perturbation new 33} is satisfied for all sufficiently small  $\tau > 0$. Examining the constraints \eqref{eq sharpness lower bounded by perturbation new 4}, when $r^i \in \calR1$ the corresponding right-hand side is equal to $0$ while $  \bar{b}^\top r^i    \ge 0$ (from Proposition \ref{pr: direction u Pstaru}), so these constraints are satisfied for all $\tau \ge 0$. And examining \eqref{eq sharpness lower bounded by perturbation new 44}, when $r^i \in \calR2$ the corresponding right-hand side is strictly negative, so \eqref{eq sharpness lower bounded by perturbation new 44} is satisfied for all sufficiently small  $\tau > 0$. Therefore, there exists $	\tau > 0$ that satisfies all the constraints of OP, which implies that $t > 0$.
	
Now let us show that $\opt(g + \Delta g,\calF) \ne \emptyset$ and $\opt(g + \Delta g,\calF) \not\subset \opt(g,\calF)$. It follows from \eqref{eq sharpness lower bounded by perturbation new 2} that $(g+ \Delta g)^\top (\hat{u} - u ) \le 0 $ for any $u \in \calF$ and therefore $\hat{u} \in \opt(g + \Delta g,\calF)$ and therefore $\opt(g + \Delta g,\calF) \ne \emptyset$. Now notice that when $\tau$ is optimal for \eqref{eq sharpness lower bounded by perturbation new 2} (and its equivalent formulation OP), there exists either $v^i \in \calE2$ for which the corresponding constraint in \eqref{eq sharpness lower bounded by perturbation new 33} is active, or $r^i \in \calR2$ for which the corresponding constraint in \eqref{eq sharpness lower bounded by perturbation new 44} is active (or both). In the former case, $v^i \in \opt(g + \Delta g,\calF) $ and in the latter case $\hat{u} + r^i \in \opt(g + \Delta g,\calF) $. And in either case, we have $\opt(g + \Delta g,\calF) \not\subset \opt(g,\calF)$.
	
Last of all, because $g^\top a = g^\top b$, $a \in \linV$, and $\bar{b}\in\linV$, we have
	$$
	\|P_{\linV}(\Delta g)\| = t \cdot \|\bar{b}\| = t \cdot  \frac{-g^\top b}{\|b\|} \le  \frac{-g^\top b}{\|b\|} = \frac{-g^\top a}{\|b\|} =  \frac{-P_{\linV}(g)^\top a}{\|b\|}    \le \frac{\|a\|}{\|b\|} \cdot   \|P_{\linV}(g)\| = \frac{\dist(\bar{u},V\cap H^\star)}{\dist(\bar{u},\calU^\star)}	\cdot   \|P_{\linV}(g)\| \ . $$
	This shows \eqref{running} and completes the proof. \end{proof}

Last of all, Theorem \ref{thm: generalized sharpness and perturbation} follows by combining Lemmas \ref{lm: sharpness upper bounded by perturbation} and \ref{lm: sharpness lower bounded by perturbation new}. Theorem \ref{thm: sharpness and perturbation} follows by applying Theorem \ref{thm: generalized sharpness and perturbation} to \eqref{pro: general primal LP}.

\subsection{Proof of Theorem \ref{thm: sharpness from adjacent edges}}

%In this subsection, we prove Theorem \ref{thm: sharpness from adjacent edges} by proving its generalization to the generic LP \eqref{pro: genecal class of standard form LP}. 

Our proof of Theorem \ref{thm: sharpness from adjacent edges} relies on the following two elementary propositions. For any $\eps \ge 0$ define $S_{\eps}$ to be the level set whose objective function value is exactly $\eps$ larger than the optimal objective value, namely $S_{\eps}:= \calF_p \cap \{x: c^\top x = f^\star + \eps\} $. 

\begin{proposition}\label{lm: sharpness of slices} If $\eps_2 \ge \eps_1 >0$ and $S_{\eps_i}\neq \emptyset$ for $i=1,2$, then $
		\inf_{x \in S_{\eps_2}} G(x) \ge 	\inf_{x \in S_{\eps_1}} G(x)$.
\end{proposition}
\begin{proof}
	For any given $x \in S_{\eps_2}$, let $\hat{x} : = P_{\calX^\star}(x)$ and $t:= \eps_1/\eps_2$.  Then for $x_t := t x + (1-t) \hat{x}$ it holds that $x_t \in S_{\eps_1}$.  	
	Moreover, since $x_t$ lies on the segment between $x$ and $\hat{x}$ and $\hat{x}$ is the Euclidean projection of $x$ onto the convex set $\calX^\star$, we have $\dist(x_t,\calX^\star)=\|x_t-\hat{x}\|=t\,\|x-\hat{x}\|=t\,\dist(x,\calX^\star)$.
	In addition, because $x_t\in V_p$ and $V_p\cap H_p^\star=\{u\in V_p: c^\top u=f^\star\}$, we can write $\dist(x_t, V_p\cap H_p^\star)
	= \frac{c^\top x_t - f^\star}{\|P_{\linVp}(c)\|}
	= \frac{t(c^\top x - f^\star)}{\|P_{\linVp}(c)\|}
	= t\,\dist(x, V_p\cap H_p^\star).$
	Therefore, $
	G(x_t)   = \frac{t\cdot \dist(x,V_p\cap H_p^\star)}{t \cdot \dist(x,\ \calX^\star)}  	= G(x)$. Since this equality holds for all $x \in S_{\eps_2}$, it follows that $	\inf_{x \in S_{\eps_2}} G(x) \ge 	\inf_{x \in S_{\eps_1}}  G(x) $, which proves the proposition.
\end{proof}

\begin{proposition}\label{lm:mono_sharp}
	Let $x^\star \in \calX^\star$ and $v \in \linVp$ satisfy $c^\top v > 0$.  If $t_2\ge t_1 >0$ and $x^\star + t_{i} \cdot v \in \calF_p$ for $i=1,2$, then
			$G(x^\star + t_1 \cdot v) \ge G(x^\star + t_2 \cdot v)$. 
\end{proposition}
\begin{proof}
	For $t \ge 0$ define $g_1(t) :=\dist(x^\star+t\cdot v, \ V_p \cap H_p^\star)$ and $g_2(t):=\dist(x^\star + t\cdot v,\ \calX^\star)$.  Then for $t >0$ we have $G(x^\star + t\cdot v) = g_1(t)/g_2(t)$. Notice that $g_1(t) = \frac{t\cdot c^\top v }{ \|P_{\linVp}(c)\| }$ which is a nonnegative increasing linear function of $t$ with $g_1(0) = 0$. Also notice that $g_2(t)$ is convex and nonnegative for $t \ge 0$, and $g_2(0) = 0$, whereby $g_2(t)$ is a monotonically increasing nonnegative convex function for $t \ge 0$. Therefore $g_2(t)/g_1(t)$ is monotonically increasing on $t > 0$, whereby $G(t) = g_1(t)/g_2(t)$ is monotonically decreasing on $t>0$, which proves the proposition. \end{proof}
	
\begin{proof}[Proof of Theorem \ref{thm: sharpness from adjacent edges}] First notice from the definition of $\mu$ that \begin{equation}\label{18th}
	\mu_p \le \min \left\{ R_1(\e^1), R_1(\e^2),\dots,R_1(\e^{m_1}), R_2(\f^1;\bar{\eps}),R_2(\f^2;\bar{\eps}),\dots,R_2(\f^{m_2};\bar{\eps}) \right\} \  . \end{equation}
For $\eps >0$ let us consider the level set $S_{\eps} $ and suppose that $S_{\eps} \ne \emptyset$, and let $\calE\calS_\eps$ denote the extreme points of $S_{\eps}$.  Then we claim that 
$$
\begin{aligned}
	\mu_p = \inf_{x \in \calF_p \setminus \calX^\star} G(x) &= \inf_{\eps>0} \left( \inf_{ x \in S_\eps} G(x) \right) = \lim_{\eps \to 0} \left( \inf_{ x \in S_\eps} G(x) \right) \\  
	&=\lim_{\eps \to 0} \left( \frac{\tfrac{\eps }{ \|P_{\linVp}(c)\|}}{\sup_{x\in S_\eps} \dist(x, \calX^\star)}\right)  = \lim_{\eps \to 0} \left( \frac{\tfrac{\eps }{ \|P_{\linVp}(c)\|}}{\max_{v^i\in \calE\calS_\eps} \dist(v^i, \calX^\star)}\right)  \ .
\end{aligned}
$$
Here the first equality is the definition of $\mu_p$, the second equality is a restatement of the first expression, and the third equality is due to the monotonicity property of the sharpness function on the level sets $S_{\eps}$ from Proposition \ref{lm: sharpness of slices}.  To prove the fourth equality, observe that the numerator of $G(x)$ is $\dist(x, V_p\cap H_p^\star)$ which equals the constant $\tfrac{\eps }{ \|P_{\linVp}(c)\|}$ for $x\in S_{\eps}$, and the denominator of $G(x)$ is $\dist(x, \calX^\star)$. For the fifth equality, observe that $\dist(\cdot, \calX^\star)$ is convex in $x$ and bounded from above and below on $S_\eps$, and so attains its maximum at an extreme point of  $S_\eps$.  	

Next notice that since $\calF_p$ is a polyhedron and $S_\eps$ is a level set of $\calF_p$, there exists $\bar\eps >0$ such that for all $\eps \in (0, \bar{\eps})$ the extreme points of $S_\eps$ all lie in the edges of $\calF_p$ emanating away from $\calX^\star$, namely $\calM \cap S_\eps = \calE\calS_\eps$. Therefore using the definition of $\calM = \calM_1 \cup \calM_2$ we have:
\begin{equation}\label{eq sharpness from adjacent edge 2}
	\begin{aligned}
		\mu_p &= \lim_{\eps \to 0} \left( \frac{\tfrac{\eps }{ \|P_{\linVp}(c)\|}}{\max_{v^i\in \calE\calS_\eps} \dist(v^i, \calX^\star)}\right)  = \min\left\{ \min_{\e\in\calM_1}  \left( \inf_{x \in \e: c^\top x \le f^\star + \bar{\eps}} G(x) \right) , \ 
		\min_{\f\in\calM_2}  \left( \inf_{x \in \f: c^\top x \le f^\star + \bar{\eps}} G(x) \right) 
		\right\}
		\ .
\end{aligned}
\end{equation}
It follows from Proposition \ref{lm:mono_sharp} that $G(x)$ is decreasing on any edge emanating away from $\calX^\star$.  Thus for $\e=[v^1, v^2] \in \calM_1$ with $v^1 \in \calX^\star$ and $v^2 \notin \calX^\star  $ 
we have $\inf_{x \in \e: c^\top x \le f^\star + \bar{\eps}} G(x)  \ge  G(v^2)= R_1(\e)$, and similarly for $\f=[v; r] \in \calM_2$ with $v \in \calX^\star$ and $r$ being an extreme ray of $\calF$ we have 
$\inf_{x \in \f: c^\top x \le f^\star + \bar{\eps}} G(x)  =  G(v+\bar \eps r)= R_2(\f,\bar\eps)$.
	Substituting these inequalities back into \eqref{eq sharpness from adjacent edge 2} yields	$$
	\mu_p \ge \min \left\{ R_1(\e^1), R_1(\e^2),\dots,R_1(\e^{m_1}), R_2(\f^1;\bar{\eps}),R_2(\f^2;\bar{\eps}),\dots,R_2(\f^{m_2};\bar{\eps}) \right\} \  , $$ which combined with \eqref{18th} yields the proof.
\end{proof}

\subsection*{Acknowledgements}

The authors are grateful to Kim-Chuan Toh, Haihao Lu, Jinwen Yang, and Guanghao Ye for discussions and input about the ideas in this paper and related research ideas. The authors also thank the anonymous referees for extremely helpful comments and suggestions that improved the quality of the manuscript.

% \clearpage

% \bibliographystyle{plain}
% \bibliography{reference}

\begingroup
% \small  % or \footnotesize, \scriptsize
\setstretch{0.95}  % adjust the spacing; requires \usepackage{setspace}
\bibliographystyle{plain}
\bibliography{reference}
\endgroup

\end{document}